\documentclass[reqno, 10pt]{amsart}
\usepackage{amssymb,amsmath,amsfonts,bm, mathtools,dsfont}
\usepackage{dsfont}
\usepackage{epsfig}
\usepackage{graphicx}
\usepackage{esint}
\usepackage{enumerate}
\usepackage{verbatim}
\usepackage{color}
\usepackage{caption}
\usepackage{geometry}
\usepackage{enumitem}
\usepackage{lipsum}

\usepackage{hyperref,  circledsteps,}

\makeatletter

\makeatother

\def\R{{\mathbb R}}
\def\S{{\mathbb S}}
\def\N{{\mathbb N}}

\def\t{{\underline t }}
\def\l{{\langle }}
\def\r{{\rangle }}

\def\I{{\mathcal I}}
\def\C{{\mathfrak{C}}}
\def\1{{\mathds{1}}}


\date{} 


  \DeclareMathOperator*{\sign}{sgn}

\DeclareMathOperator\support{supp}

\theoremstyle{plain}
\newtheorem{theorem}{Theorem}[section]
\newtheorem{definition}[theorem]{Definition}
\newtheorem{proposition}[theorem]{Proposition}

\newtheorem{lemma}[theorem]{Lemma}

\newtheorem{remark}[theorem]{Remark}
 \newcommand{\tild}{\widetilde}

\numberwithin{theorem}{section}
\numberwithin{equation}{section}
\numberwithin{figure}{section}

\setcounter{tocdepth}{1}
 
\let\oldtocsection=\tocsection
 
\let\oldtocsubsection=\tocsubsection
 
\let\oldtocsubsubsection=\tocsubsubsection
 
\renewcommand{\tocsection}[2]{\hspace{0em}\oldtocsection{#1}{#2}}
\renewcommand{\tocsubsection}[2]{\hspace{1em}\oldtocsubsection{#1}{#2}}
\renewcommand{\tocsubsubsection}[2]{\hspace{2em}\oldtocsubsubsection{#1}{#2}}

\begin{document}
\bibliographystyle{plain}

\parskip=4pt

\vspace*{1cm}
\title[Global  existence and uniqueness of solutions to the wave kinetic hierarchy]
{Inhomogeneous wave kinetic equation and its hierarchy in polynomially weighted $L^\infty$ spaces}

\author[Ioakeim Ampatzoglou]{Ioakeim Ampatzoglou}
\address{Ioakeim Ampatzoglou,  
Baruch College, The City University of New York}
\email{ioakeim.ampatzoglou@baruch.cuny.edu}

\author[Joseph K. Miller ]{Joseph K. Miller}
\address{Joseph K. Miller,  
Department of Mathematics, The University of Texas at Austin.}
\email{jkmiller@utexas.edu}

\author[Nata\v{s}a Pavlovi\'{c}]{Nata\v{s}a Pavlovi\'{c}}
\address{Nata\v{s}a Pavlovi\'{c},  
Department of Mathematics, The University of Texas at Austin.}
\email{natasa@math.utexas.edu}

\author[Maja Taskovi\'{c}]{Maja Taskovi\'{c}}
\address{Maja Taskovi\'{c},  
Department of Mathematics, Emory University}
\email{maja.taskovic@emory.edu}
\begin{abstract}

Inspired by ideas stemming from the analysis of the Boltzmann equation, in this paper we expand
well-posedness theory of the spatially inhomogeneous 4-wave kinetic equation, and also analyze an infinite hierarchy of PDE associated with this nonlinear equation.
More precisely, we show global in time well-posedness of the spatially inhomogeneous  4-wave kinetic equation    for polynomially decaying initial data. 
For  the associated infinite hierarchy,   we construct global in time solutions using the solutions of the wave kinetic equation and the Hewitt-Savage theorem. Uniqueness of these solutions  is proved by using a combinatorial board game argument tailored to this context, which allows us to control the factorial growth of the Dyson series.

\end{abstract}
\maketitle
\tableofcontents

\section{Introduction}

The goal of this paper is to establish global well-posedness results for certain partial differential equations that appear in the context of wave turbulence, so we start by briefly reviewing some mathematical results pertaining to wave turbulence.

\subsection{A framework of wave turbulence}
When faced with a dynamical system containing a large number of nonlinear interacting waves, instead of considering individual 
trajectories it is often helpful to study ensembles. Such a statistical mechanic treatment of the dynamics is what is usually referred to as wave turbulence, the main topic
of which is focused on rigorously deriving and analyzing an effective equation for the non-equilibrium dynamics of the relevant microscopic system.
While appearance of a wave kinetic equation goes back to works of Peierls in 1920 \cite{pe29} and Hasselmann in early 1960s \cite{ha62, ha63}, revived interest was 
inspired, in part, by the influential work \cite{zalvfa92} of Zakharov, L'vov and Falkovich in 1992 that uncovered 
a power-law type stationary solutions analogous to the
Kolmogorov spectra of hydrodynamic turbulence. For more details,  see for example Nazarenko \cite{na11} and Newell-Rumpf \cite{neru13}.

The attention of the mathematical community in wave turbulence has largely focused, so far, on the derivation of wave  kinetic equations starting from dynamics governed by nonlinear dispersive equations. In a pioneering work  \cite{lusp11}, Lukkarinen and Spohn  obtained a  rigorous derivation of  a linearized spatially homogeneous wave kinetic equation from a cubic nonlinear Scr\"{o}dinger equation (NLS) at statistical equilibrium, by employing Feynmann diagrams expansions. For 3-wave systems, linearized wave turbulence convergence results were also obtained by Faou \cite{fa20}. Regarding the out of the equilibrium case, starting from a cubic NLS with random data out of equilibrium, emergence of the 4-wave  spatially homogeneous kinetic equation
  \begin{align} \label{intro - h4KWE}
    \partial_t f = \mathcal{C}[f],
\end{align}
with  the collision operator defined as in \eqref{KWE collision operator},  has been studied in a sequence of papers \cite{bugehash21, coge19, coge20, deha21} that led to the works of Deng-Hani \cite{deha23, deha22}, who obtained a derivation up to the kinetic time\footnote{ The kinetic time is the relevant time scale for which one expects the system to exhibit a kinetic behavior.} and their recent work \cite{deha23long} where the derivation is obtained as long as the nonlinear wave kinetic equation \eqref{intro - h4KWE} is well-posed.
We would also like to note that starting from a stochastic Zakharov-Kuznetsov equation (which is a multidimensional generalization of Korteveg-de-Vries equation) with multiplicative noise, Staffilani-Tran \cite{sttr21} derived the spatially homogeneous 3-wave  kinetic equation up to the kinetic time.

The spatially {\it inhomogeneous} wave kinetic equation 
\begin{align} \label{intro - general KWE}
    \partial_t f+v\cdot\nabla_x f= \mathcal{C}[f],
\end{align}
with operator $\mathcal{C}$ describing interaction of waves, appears in the physics literature \cite{zalv75, zalvfa92}.
This type of equation is used for modeling ocean waves \cite{smja13}. Moreover, Spohn \cite{sp06} discusses the emergence of an inhomogeneous phonon Boltzmann equation  and addresses its connection to nonlinear waves. 
The first rigorous derivation result regarding the spatially  inhomogeneous wave kinetic equation was obtained by the first author of this paper, Collot and Germain  \cite{amcoge24}, who derived a   3-wave kinetic equation from quadratic  Scr\"{o}dinger-type nonlinearities. On the other hand, starting from a stochastic Zakharov-Kuznetsov equation with multiplicative noise,   Hannani-Rosenzweig-Staffilani-Tran \cite{harosttr22} derived a spatially inhomogeneous 3-wave kinetic equation up to the kinetic time. Recently, Hani-Shatah-Zhu \cite{hashzh23} derived several inhomogeneous and homogeneous wave kinetic equations from the simplified model of the Wick NLS whose main feature is the absence of all self-interactions in the correlation expansions of its solutions.

  Despite exciting activity on the rigorous derivation of wave kinetic equations, to the best of our knowledge, the analysis of wave kinetic equations themselves  has been carried out only in some instances. For example,  we note that  Escobedo-Vel\'azquez \cite{esve15}  constructed solutions to the spatially homogeneous bosonic Nordheim equation that exhibit blow up in finite time. 
 Germain-Ionescu-Tran \cite{geiotr20}  proved local well-posedness of the spatially homogeneous 4-wave kinetic equations in $L_v^2$ and $L_v^\infty$ based spaces. Moreover, Menegaki \cite{me23} showed $L^2$-stability near equilibrium,  and Collot-Dietert-Germain  \cite{codige24} showed stability and cascades of the Kolomogorov-Zakharov spectra, all in the cases of spatially homogeneous equations.
    On the other hand, for the spatially inhomogeneous equation, we are aware only of the work  the first author of this paper \cite{am24}, who recently obtained global well-posedness for the  4-wave kinetic equation in exponentially weighted $L_{x,v}^\infty$ spaces, employing classical tools of the kinetic theory of particles.

The aim of this paper is  to broaden the analysis of the inhomogeneous 4-wave kinetic equation and  its associated infinite hierarchy of PDE - the inhomogeneous 4-wave kinetic hierarchy.
We start by presenting an overview of the results of this paper. Notation and precise statements of results are then presented in subsections \ref{intro-subsection-KWE} and \ref{intro-subsection-KWH}.

\subsection{Results of this paper in a nutshell.}  \label{nutshell}
\begin{enumerate}
\item 
Inspired by ideas stemming from the analysis of the Boltzmann equation, in this paper we expand  well-posedness theory of the spatially inhomogeneous 4-wave kinetic equation
  \begin{align} \label{intro - 4KWE}
    \partial_t f+v\cdot\nabla_x f= \mathcal{C}[f],
\end{align}
where the collision operator is defined in \eqref{KWE collision operator}.
  In particular, we  prove existence of a unique global in time solution to the corresponding integral equation in {\it polynomially weighted} $L_{x,v}^\infty$ spaces (see Definition \ref{def-kwe mild solution equation} for the precise definition of the solution and Theorem \ref{thm-kwe is well posed} for the well-posedness result for the equation).  This result is a consequence of a novel a priori bound (Proposition \ref{a-priori estimate equation}), which is motivated by the a priori bound obtained by Toscani \cite{to86} for solutions of the Boltzmann equation.
  However, in the current context of the wave kinetic equation \eqref{intro - 4KWE}, by exploiting higher order multilinear operators   \eqref{gain loss form}, we were able to rely on the integrals with higher order velocity weights (see Lemma \ref{appendix lemma on velocities weight}). Thanks to these additional weights, we did not need Carleman-like representation, as was the case for the Boltzmann equation in \cite{to86, ammipata24}

\item 
Furthermore, in this paper we also study the spatially inhomogeneous 4-wave kinetic hierarchy
\begin{align} \label{intro - KWH - first}
   \partial_t f^{(k)} + \sum_{j=1}^k v_k\cdot \nabla_{x_k} f^{(k)} = \C^{(k+2)}f^{(k+2)},
\end{align}
with $\C^{(k+2)}$ given by \eqref{C^k}. 
To the best of our knowledge, this is the first paper that analyzes a spatially inhomogeneous wave kinetic hierarchy. 
We note that homogeneous version of the 4-wave kinetic hierarchy has been studied by Rosenzweig-Staffilani in \cite{rost22}, who obtained a local in time existence and uniqueness of solutions in polynominally weighted $L^{\infty}_v$ spaces. Also, Deng-Hani \cite{deha22} derived{\footnote{This kind of derivation is in contrast to derivations of infinite Boltzmann and Schr\"{o}dinger hierarchies from particles systems, where corresponding nonlinear equations are obtained from infinite hierarchies. Instead, in \cite{deha22}, authors utilize the derivation of nonlinear wave kinetic equation in order to derive associated infinite hierarchy.}} spatially homogeneous 4-wave kinetic {\it hierarchy} based on a derivation of 4-wave kinetic equation and a propagation of chaos for this equation.

The main objects of this paper - the equation \eqref{intro - 4KWE} and the hierarchy \eqref{intro - KWH - first} - are connected by the fact that the hierarchy \eqref{intro - KWH - first} admits a special class of factorized solutions
\begin{align}
    f^{(k)}(t, X_k, V_k) = \prod_{j=1}^k f(t,x_j,v_j),
\end{align}
 with each factor $f$ solving the 4-wave kinetic equation \eqref{intro - 4KWE},  where $X_k=(x_1, \dots, x_k)$ and $V_k=(v_1, \dots, v_k)$. We note that this factorization is analogous to the relationship between solutions of the Boltzmann equation and the Boltzmann hierarchy\footnote{which is an infinite hierarchy of coupled linear equations appearing in a rigorous derivation of the Boltzmann equation from many particle systems} \cite{la75, ki75, gastte13, pusasi14, chho23}  as well as the relationship between solutions of the NLS and the infinite hierarchy that appears in the derivation of the NLS from quantum many particle systems, referred to as  the Gross-Pitaevskii hierarchy \cite{erscya06, erscya07, chpa11, chpa14, so15, kiscst11, amliro20}. In contrast to known derivations of the Boltzmann and Gross-Pitaevskii hierarchies from many particle systems, the derivation of the inhomogeneous  wave kinetic hierarchy \eqref{intro - KWH - first} is an open problem.

We now summarize main results of this paper pertaining to the spatially inhomogeneous wave kinetic hierarchy \eqref{intro - KWH - first}.
\begin{enumerate}
\item 
Since in this paper we first establish the existence of global in time solutions to the wave kinetic equation \eqref{intro - 4KWE}, we can use these solutions to construct a global in time solution of the wave kinetic hierarchy \eqref{intro - KWH - first} for a special class  of initial data, the so-called admissible\footnote{In instances of particle systems, admissible data can be thought of as the marginals of a probability density.} initial data (for the precise statement, see Definition \ref{admissible data}). 
This construction is implemented in a similar fashion as in our recent work \cite{ammipata24} on the well-posedness for the Boltzmann hierarchy. More precisely, 
\begin{enumerate}
    \item[Step 1:] Since initial data  $F_0=(f_0^{(k)})_{k=1}^\infty$  of the wave kinetic hierarchy \eqref{intro - KWH - first} is assumed to be admissible, by the Hewitt-Savage theorem \cite{hs55}, there is a unique Borel probability measure  $\pi$ such that $F_0$ can be  represented  as a convex combination of tensorized states with respect to $\pi$  over the set of probability densities $\mathcal{P}$ i.e.
    \begin{equation*}
     f_0^{(k)} = \int_{\mathcal{P}} h_0^{\otimes k} d\pi(h_0).
    \end{equation*}
    It can be shown that the measure $\pi$ is  supported on a set of probability densities of a space-velocity polynomial decay (see Proposition \ref{hewitt-savage} below, which was proved in \cite{ammipata24}). 
\item[Step 2:] For each  $h_0$ in the support of the measure $\pi$, by the well-posedness result of this paper for the wave kinetic equation  (see Theorem \ref{thm-kwe is well posed}), there exists a global in time solution $h(t)$ to the  wave kinetic equation \eqref{intro - 4KWE}  of the same polynomial decay as the initial data. Finally, equipped with these solutions of the wave kinetic equation, we construct a solution $F=(f^{(k)})_{k=1}^\infty$ of the wave kinetic hierarchy \eqref{intro - KWH - first}  as follows:
\begin{equation}\label{intro-step2}
        f^{(k)}(t) : = \int_{\mathcal{P}} h(t)^{\otimes k} d\pi(h_0),\quad  k\in\N,
    \end{equation}  
and prove that it belongs to the class of polynomially weighted $L^\infty_{x,v}$ solutions of \eqref{intro - KWH - first}.
\end{enumerate}

\begin{remark} This two-step proof of existence of solutions to the inhomogeneous wave kinetic hierarchy \eqref{intro - KWH - first} is different than the  proof of existence of solutions to the homogeneous wave kinetic hierarchy by Rosenzweig-Staffilani in \cite{rost22}. Our approach utilizes the global in time existence of solutions to the corresponding nonlinear equation. Indeed, as it can be seen in \eqref{intro-step2}, our solution to the hierarchy is built from corresponding solutions to the nonlinear equation \eqref{intro - 4KWE}. On the other hand, the work \cite{rost22} employs representing a solution of the infinite hierarchy by iterating Duhamel formulas. In our case, such an iteration is not needed for proving {\it existence} of solutions for the infinite hierarchy \eqref{intro - KWH - first}.

\end{remark}

\item 
We also prove uniqueness of solutions to the wave kinetic hierarchy \eqref{intro - KWH - first}. While existence of solutions to the wave kinetic hierarchy is established  in this paper for admissible initial data, our uniqueness proof does not use admissibility. More precisely, we prove uniqueness of a mild solution (see Definition \ref{def-kwh mild solution}) to the wave kinetic hierarchy  \eqref{intro - KWH - first} corresponding to initial data in  polynomially weighted $L^\infty_{x,v}$ spaces. The main ingredients of the uniqueness proof are:
\begin{itemize}
\item[(i)] an a priori estimate (see Proposition \ref{a priori estimate}) for the wave kinetic hierarchy \eqref{intro - KWH - first}. 
We prove this estimate  by adapting to the context of the infinite hierarchy ideas  used to obtain the a priori bound (see Proposition \ref{a-priori estimate equation}) for the wave kinetic equation \eqref{intro - 4KWE};
\item[(ii)] a new combinatorial board game argument, inspired by: 
\begin{itemize} 
\item[$\bullet$] the board game argument introduced by Klainerman and Machedon  \cite{klma08} in the context of the Gross-Pitaevskii hierarchy corresponding to the cubic nonlinear Schr\"odinger equation, 
\item[$\bullet$] the adaptation of T. Chen and the third author of this paper  \cite{chpa11} of the board game for the Gross-Pitaevskii hierarchy corresponding to the quntic NLS, and 
\item[$\bullet$] our recent use \cite{ammipata24} of board game arguments in the context of the Boltzmann hierarchy in $L^{\infty}_{x,v}$-based spaces\footnote{We note that prior to \cite{ammipata24} all implementations of the board game argument were done in $L^2$-based spaces.}.
\end{itemize} 
At the heart of  board game arguments is a reorganization of the iterated Duhamel formulas (which contain a factorial number of terms) into an exponential number of equivalence classes (see Proposition \ref{prop-equivalent classes} which achieves that\footnote{A special upper echelon form  in this proposition can be understood as a representative of an equivalence class mentioned above.} for hierarchy \eqref{intro - KWH - first}).
We note that this paper presents the first application of a board game argument in the context of a wave kinetic hierarchy. 
\end{itemize}

\end{enumerate} 
\end{enumerate}

We continue the introduction by precisely describing our results for the wave kinetic equation in Section \ref{intro-subsection-KWE} and results for the wave  kinetic hierarchy in Section \ref{intro-subsection-KWH}.

\subsection{Wave kinetic equation: notation and the main result}\label{intro-subsection-KWE}
The Cauchy problem for the spatially inhomogeneous 4-wave kinetic equation  for a function  $f:[0,\infty)\times\R^{3}\times\R^{3}\to \R$ with initial data $f_0:\R^{3}\times\R^{3}\to \R$, is given by
\begin{equation}\label{KWE}
\begin{cases}
\partial_t f+v\cdot\nabla_x f= \mathcal{C}[f],  \\
f(t=0)=f_0,
\end{cases}
\end{equation}
with the collisional operator defined as follows
\begin{equation}\label{KWE collision operator}
\mathcal{C}[f]=\int_{\R^9}\delta(\Sigma)\delta(\Omega)ff_1f_2f_3\left(\frac{1}{f}+\frac{1}{f_1}-\frac{1}{f_2}-\frac{1}{f_3}\right)\,dv_1\,dv_2\,dv_3,    
\end{equation}
and where the resonant manifolds are given by
\begin{align}\label{manifold for equation}
    \Sigma=v+v_1-v_2-v_3,\quad \Omega=|v|^2+|v_1|^2-|v_2|^2-|v_3|^2.
\end{align}
We use the notation
$f:=f(v)$, $f_i:=f(v_i)$, $ i=1,2,3.$

The collisional operator $\mathcal{C}[f]$ can be equivalently written as follows:
\begin{equation}\label{gain loss form}
\mathcal{C}[f]=L_0(f,f,f)+L_1(f,f,f)-L_2(f,f,f)-L_3(f,f,f),    
\end{equation}
where for functions $g, h, l:[0,\infty)\times\R^{3}\times\R^{3}\to \R$ we denote
\begin{align}
L_0(g,h,l)&=\int_{\R^9}\delta(\Sigma)\delta(\Omega)g(v_1)h(v_2) l(v_3)\,dv_1\,dv_2\,dv_3, \label{L_0}\\
L_1(g,h,l)&=\int_{\R^9}\delta(\Sigma)\delta(\Omega)g(v) h(v_2) l(v_3)\,dv_1\,dv_2\,dv_3, \label{L_1}\\
L_2(g,h,l)&=\int_{\R^9}\delta(\Sigma)\delta(\Omega)g(v) h(v_1) l(v_3)\,dv_1\,dv_2\,dv_3, \label{L_2}\\
L_3(g,h,l)&=\int_{\R^9}\delta(\Sigma)\delta(\Omega)g(v) h(v_1) l(v_2)\,dv_1\,dv_2\,dv_3. \label{L_3}
\end{align}
 Notice that the operators $L_0,L_1,L_2,L_3$ are multilinear with respect to their arguments and monotone when the inputs are non-negative. 

 The collisional operator $\mathcal{C}[f]$ can  also be written in weak formulation as follows \cite[pp.122, eqn (8.18)]{na11}
\begin{equation}\label{weak form}
\int_{\R^3}\mathcal{C}[f]\phi\,dv= \int_{\R^{12}}\delta(\Sigma)\delta(\Omega)ff_1f_2f_3\left(\frac{1}{f}+\frac{1}{f_1}-\frac{1}{f_2}-\frac{1}{f_3}\right)\left(\phi+\phi_1-\phi_2-\phi_3\right)\,dv_1\,dv_2\,dv_3\,dv,
\end{equation}
where $\phi$ is a test function appropriate for all the above integrations to make sense. Choosing $\phi\in\{1,v,|v|^2\}$ and using the resonant conditions, one can formally see that a solution $f$ to \eqref{KWE}  conserves mass, momentum and energy:
\begin{equation}\label{conservation laws formal}\partial_t \int_{\R^3}f\phi\,dv=0,\quad \phi\in\{1,v,|v|^2\}.
\end{equation}

We now introduce the spaces that will be used in the formulation of the well-posedness result for the wave kinetic equation \eqref{KWE}. We shall be using the notation that for $y \in \R^3$, 
\begin{equation*}
    \langle y \rangle^2 = 1 + |y|^2
\end{equation*}
to define the following function space.

\begin{definition}
    For each $T > 0$, $p,q > 1$ and $\alpha,\beta > 0$ we define the space 
    \begin{equation*}
        X_{p,q, \alpha,\beta} : = \left\{ f: \R^3 \times \R^3 \rightarrow \R  \text{ measurable functions such that } \Vert f \Vert_{p,q,\alpha, \beta} < \infty \right\},
    \end{equation*}
    where we define the norm as 
    \begin{equation*}
        \Vert f \Vert_{p,q,\alpha,\beta} : = \left \Vert \langle \alpha x\rangle^p \langle \beta v \rangle^q f(x,v) \right \Vert_{L^\infty}.
    \end{equation*}
    We additionally define the functional spaces in time as 
    \begin{equation*}
        X_{p,q, \alpha,\beta, T} = \mathcal{C}([0,T], X_{p,q, \alpha, \beta}), 
    \end{equation*}
    where we are using the natural supremum norm on this space:
    \begin{equation*}
        ||| f(\cdot) |||_{p,q,\alpha,\beta,T} : = \sup_{t \in [0,T]} \Vert f(t) \Vert_{p,q, \alpha, \beta} .
    \end{equation*}
\end{definition}

Before we define a concept of the solution we will be working with, we note that we will denote by $T_1$ the  transport operator, which is defined by its action on a  function $g:[0,\infty)\times\R^{d}\times\R^{d}\to\R$ as follows:
\begin{equation}\label{definition of transport for KWE}
 T_1^{s}g^{(k)}(t,x,v):=g^{(k)}(t,x-sv,v).
\end{equation}
With this notation, by Duhamel's formula, the wave kinetic equation \eqref{KWE}  can be formally written in mild form as:
\begin{equation}\label{KWE - mild form classic}
f(t)=T_1^{t}f_0+\int_0^t T_1^{t-s}\mathcal{C}[f](s)\,ds,\quad t\in[0,T],
\end{equation}
or equivalently after applying $T_1^{-t}$,
\begin{equation}\label{KWE - mild form we use}
    T_1^{-t}f(t)=f_0 +\int_0^t T_1^{-s}\mathcal{C}[f](s)\,ds,\quad t\in[0,T].
\end{equation}
We can now state precisely the definition of a mild solution we will be using in this paper.

\begin{definition}[Mild solution of the wave kinetic  equation]
Let $T>0$, $p,q>1$ and $\alpha,\beta>0$, and consider initial data $f_0\in X_{p,q,\alpha,\beta}$. A measurable function $f:[0,T]\times\R^3\times\R^3\to\R$ is called a mild solution to the  wave kinetic  equation \eqref{KWE} in $[0,T]$ corresponding to the initial data $f_0$  if 
\begin{align}
   T_1^{-(\cdot)}f(\cdot)\in X_{p,q,\alpha,\beta,T},
\end{align} 
and 
\begin{equation}\label{def-kwe mild solution equation}
    T_1^{-t} f(t,x,v)= f_0+\int_0^t T_1^{-s} \mathcal{C}[f](s,x,v)\,ds,\quad t\in[0,T].
\end{equation}
\end{definition}

We now state the main result regarding the well-posedness of the wave kinetic equation \eqref{KWE}.

\begin{theorem} \label{thm-kwe is well posed}
 Let $p>1,q>3$, $\alpha,\beta>0$ and $T>0$. Let $M>0$ with $M<(24C_{p,q,\alpha, \beta})^{-1/2}$, where $C_{p,q,\alpha,\beta}$ is given by \eqref{constant C}. Consider $f_0\in X_{p,q,\alpha,\beta}$, with $\|f_0\|_{p,q,\alpha,\beta}\le \frac{M}{2}$. Then there exists a unique mild solution to the wave kinetic  equation \eqref{KWE}, in the class of functions satisfying:
\begin{equation}\label{condition on uniqueness equation}
   |||T_1^{-(\cdot)}f(\cdot)|||_{p,q,\alpha,\beta,T}\leq  M. 
\end{equation}

If $f_0\ge 0$, the solution remains non-negative. Additionally, assuming that $f$ and $g$ are the mild solutions corresponding to initial data $f_0$ and $g_0$ respectively, we have the continuity with respect to initial data estimate:
\begin{equation}\label{stability estimate}
   |||T_1^{-(\cdot)}f(\cdot)-T_1^{-(\cdot)}g(\cdot)|||_{p,q,\alpha,\beta,T}\leq 2 \|f_0-g_0\|_{p,q,\alpha,\beta}. 
\end{equation}
In particular 
\begin{equation} \label{bound wrt initial data equation}
    |||T_1^{-(\cdot)}f(\cdot)|||_{p,q,\alpha,\beta,T}\leq 2 \|f_0\|_{p,q,\alpha,\beta}.
\end{equation}
Moreover,  the solution satisfies the following conservation laws: for any $t\in[0,T]$ and a.e. $x\in\R^3$:
\begin{equation}\label{conservation of mass KWE}
\text{If $p>3,\,q>4$}:\,\,\int_{\R^3}f(t,x,v)\,dv=\int_{\R^3}f_0(x,v)\,dv,    
\end{equation}
\begin{equation}\label{conservation of momentum KWE}
\text{If $p>3,\,q>5$ }:\,\,\int_{\R^3}vf(t,x,v)\,dv=\int_{\R^3}vf_0(x,v)\,dv,   
\end{equation}
\begin{equation}\label{conservation of energy KWE}
\text{If $p>3,\,q>6$ }:\,\,\int_{\R^3}|v|^2f(t,x,v)\,dv=\int_{\R^3}|v|^2f_0(x,v)\,dv.    
\end{equation}
\end{theorem}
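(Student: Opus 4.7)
The plan is to apply the Banach fixed point theorem to the map
\[
\Phi[f](t, x, v) := T_1^t f_0(x,v) + \int_0^t T_1^{t-s} \mathcal{C}[f](s, x, v) \, ds
\]
on the closed ball $\mathcal{B}_M := \{f \in X_{p,q,\alpha,\beta,T} : |||T_1^{-(\cdot)} f(\cdot)|||_{p,q,\alpha,\beta,T} \leq M\}$. After applying $T_1^{-t}$ the fixed-point equation becomes $T_1^{-t}\Phi[f](t) = f_0 + \int_0^t T_1^{-s}\mathcal{C}[f](s)\,ds$, so any fixed point automatically satisfies Definition \ref{def-kwe mild solution equation}, and all bounds will be formulated in the streamed norm $|||T_1^{-(\cdot)}\cdot|||_{p,q,\alpha,\beta,T}$.

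The analytic heart of the argument is the multilinear a priori estimate supplied by Proposition \ref{a-priori estimate equation} together with the decomposition \eqref{gain loss form}: each of the four trilinear operators $L_i$ admits a bound of the form
\[
\Bigl\| T_1^{-s} L_i(f,g,h)(s) \Bigr\|_{p,q,\alpha,\beta} \leq C_{p,q,\alpha,\beta}\, \|T_1^{-s} f(s)\|_{p,q,\alpha,\beta}\, \|T_1^{-s} g(s)\|_{p,q,\alpha,\beta}\, \|T_1^{-s} h(s)\|_{p,q,\alpha,\beta}
\]
whose time integral is uniformly controlled. Time-uniformity is produced by exploiting the shifts $x \mapsto x + s(v - v_j)$ generated by $T_1^{-s}$, which turn the space weight into polynomial decay factors $\langle \alpha(x - s(v_j - v))\rangle^{-p}$ whose $v_j$-integrals, after the two $\delta$-constraints on \eqref{manifold for equation} are resolved, are handled by Lemma \ref{appendix lemma on velocities weight}. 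Summing the four terms in $\mathcal{C}$ and polarizing the cubic nonlinearity yields a self-map condition $M/2 + 4C_{p,q,\alpha,\beta}M^3 \leq M$ and a contraction condition $12C_{p,q,\alpha,\beta}M^2 < 1/2$, both guaranteed by $M < (24C_{p,q,\alpha,\beta})^{-1/2}$. Uniqueness in $\mathcal{B}_M$ and the continuity estimate \eqref{stability estimate}, which immediately gives \eqref{bound wrt initial data equation}, follow by applying the same trilinear bound to $\Phi[f] - \Phi[g]$ and absorbing the $1/2$-factor into $\|f_0 - g_0\|_{p,q,\alpha,\beta}$.

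For the non-negativity, I would rewrite the collision operator as $\mathcal{C}[f] = Q^+[f] - f(v)\,R[f](v)$ with
\[
Q^+[f] = L_0(f,f,f) + L_1(f,f,f), \qquad R[f](v) = \int_{\R^9} \delta(\Sigma)\delta(\Omega)\, (f_1 f_3 + f_1 f_2)\, dv_1\, dv_2\, dv_3,
\]
both manifestly nonnegative when $f \geq 0$. Along the free characteristic the equation becomes $\partial_t(T_1^{-t} f) + (T_1^{-t} f)(T_1^{-t} R[f]) = T_1^{-t} Q^+[f]$, and the exponential integrating-factor Duhamel formula defines a Picard scheme whose iterates started at $f_0 \geq 0$ remain nonnegative; uniqueness in $\mathcal{B}_M$ then forces this nonnegative limit to coincide with the fixed point constructed above.

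Finally, the conservation laws \eqref{conservation of mass KWE}--\eqref{conservation of energy KWE} follow from the weak formulation \eqref{weak form} tested with $\phi \in \{1, v, |v|^2\}$: for each such $\phi$ the factor $\phi + \phi_1 - \phi_2 - \phi_3$ vanishes on the resonant manifold $\{\Sigma = 0,\, \Omega = 0\}$, forcing $\int_{\R^3} \mathcal{C}[f]\phi\,dv = 0$, and the decay thresholds $q > 4, 5, 6$ (together with $p > 3$) are precisely what is needed to make the corresponding triple moment integrals absolutely convergent, so that the formal cancellation is rigorous and the identity $\partial_t \int_{\R^3} f\phi\,dv = 0$ holds for a.e.\ $x$. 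The main obstacle throughout is the time-uniform trilinear estimate of Proposition \ref{a-priori estimate equation}: pulling enough polynomial decay out of the resonant $\delta$-constraints to close in the weight $\langle \alpha x\rangle^p \langle \beta v\rangle^q$ without invoking a Carleman-type change of variables is the real analytic content, and once it is in hand the fixed-point mechanics, positivity argument, and weak-formulation cancellation are essentially bookkeeping.
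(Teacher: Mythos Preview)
Your proposal is correct and follows essentially the same route as the paper: a contraction-mapping argument on the ball of radius $M$ driven by Proposition~\ref{a-priori estimate equation}, non-negativity via the gain--loss splitting and a monotone (Kaniel--Shinbrot type) iteration, and conservation laws via the weak form \eqref{weak form} with $\phi\in\{1,v,|v|^2\}$. One minor slip: the displayed inequality you wrote for $L_i$ is a \emph{pointwise-in-$s$} bound, which is not what Proposition~\ref{a-priori estimate equation} gives (and, if it were all you had, integrating would produce a factor of $T$ and kill globality); the correct statement is the time-integrated one, $\bigl\|\int_0^t T_1^{-s}L_i(f,g,h)(s)\,ds\bigr\|_{p,q,\alpha,\beta}\le C_{p,q,\alpha,\beta}\prod|||T_1^{-(\cdot)}\,\cdot\,|||_{p,q,\alpha,\beta,T}$, which your surrounding text (``whose time integral is uniformly controlled'', the discussion of the shifts in $x$) makes clear you understand.
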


\begin{remark}
We note that we work in dimension $d=3$ because one of the key estimates (Lemma \ref{appendix lemma on velocities weight}) is established only for this dimension. 
\end{remark}

\subsection{Wave kinetic hierarchy: notation and the main result}\label{intro-subsection-KWH}
As mentioned in subsection \ref{nutshell},  we introduce the spatially inhomogeneous 4-wave kinetic hierarchy as follows:
\begin{align}\label{kwh}
\partial_t f^{(k)}(t,X_k, V_k) + \sum_{j=1}^k v_k\cdot \nabla_{x_k} f^{(k)} = \C^{k+2}f^{(k+2)},
\end{align}
with collision operator defined by
\begin{align}\label{C^k}
\C^{k+2}f^{(k+2)} &= \sum_{j=1}^k  \C_{j, k+2}f^{(k+2)}\\
 & := \sum_{j=1}^k \left( \C^{L_0}_{j, k+2}f^{(k+2)} +\C^{L_1}_{j, k+2} f^{(k+2)} - \C^{L_2}_{j, k+2} f^{(k+2)} -\C^{L_3}_{j, k+2}f^{(k+2)} \right), \label{sum of four}
\end{align}
where
\begin{align}
&C^{L_0}_{j, k+2} f^{(k+2)} (t,X_k, V_k) \nonumber \\
&= 
\int_{\R^{9}} dv_{k+1} dv_{k+2}dv_{k+3} \,\delta(\Sigma_{j, k+2})\, \delta(\Omega_{j, k+2})
\, f^{(k+2)}(t, X_k, x_j, x_j, \,\, V_k^{j,v_{k+1}},   v_{k+2}, v_{k+3}), \label{CM} \\
&\C^{L_1}_{j, k+2} f^{(k+2)}(t,X_k, V_k) \nonumber \\
& =\int_{\R^{9}} dv_{k+1} dv_{k+2}dv_{k+3} \,\delta(\Sigma_{j, k+2})\, \delta(\Omega_{j, k+2})
\, f^{(k+2)}(t, X_k, x_j, x_j, \,V_k,   v_{k+2}, v_{k+3}),
\label{CL1}\\
&\C^{L_2}_{j, k+2} f^{(k+2)}(t,X_k, V_k) \nonumber \\
& =\int_{\R^{9}} dv_{k+1} dv_{k+2}dv_{k+3} \,\delta(\Sigma_{j, k+2})\, \delta(\Omega_{j, k+2})
\, f^{(k+2)}(t, X_k, x_j, x_j, \,V_k,   v_{k+1}, v_{k+3}),
\label{CL2}\\
&\C^{L_3}_{j, k+2} f^{(k+2)}(t,X_k, V_k) \nonumber \\
& =\int_{\R^{9}} dv_{k+1} dv_{k+2}dv_{k+3} \,\delta(\Sigma_{j, k+2})\, \delta(\Omega_{j, k+2})
\, f^{(k+2)}(t, X_k, x_j, x_j, \,V_k,   v_{k+1}, v_{k+2}),
\label{CL3}
\end{align}
and
\begin{equation}\label{Sigma and Omega}
\begin{aligned}
  &\Sigma_{j, k+2}  = v_j + v_{k+1} - v_{k+2} - v_{k+3},\\
  &\Omega_{j, k+2} = |v_j|^2 + |v_{k+1}|^2 - |v_{k+2}|^2 - |v_{k+3}|^2,
\end{aligned}
\end{equation}
and
\begin{align}\label{V^j_k}
     V_k^{j,v_{k+1}} = (v_1, \dots, v_{j-1}, \,  \underbrace{v_{k+1}}_{j-\text{th}} , \, v_{j+1}, \dots , v_k).
\end{align}

One can also represent each operator $\C_{j,k+2}$ as a difference:
\begin{align}\label{gain-loss}
    \C_{j,k+2} = \C^+_{j,k+2}-\C^-_{j,k+2},
\end{align}
where
\begin{align}\label{gain}
    & \C^+_{j,k+2} = \C^{L_0}_{j,k+2}+\C^{L_1}_{j,k+2},\\
    & \C^-_{j,k+2} = \C^{L_2}_{j,k+2}+\C^{L_3}_{j,k+2}. \label{loss}
\end{align}
With this notation we have
\begin{align} \label{gain-loss sum}
\C^{k+2} = \sum_{j=1}^k \C^+_{j,k+2} - \C^-_{j,k+2}.
\end{align}

 Motivated by  \cite{ammipata24}, we define Banach spaces that will be used throughout the paper. For $Y_k = (y_1, y_2, \dots, y_k) \in \R^{3k}$, we define
 \begin{align}\label{double bracket}
     \l\l Y_k\r\r:=\prod_{i=1}^k\l y_i\r,\quad \l y_i\r:=\sqrt{1+|y_i|^2},\quad i=1,\cdots,k.
 \end{align}

We are ready to define the following Banach spaces and their corresponding norms:

\noindent $\bullet$ Given $k\in \N$, $p,q >1$, $\alpha,\beta>0$, we define
\begin{align}\label{X_k static}
 & X_{ p,q, \alpha, \beta}^k:=\left\{ g^{(k)}:\mathbb{R}^{3k}\times\R^{3k}\to\R,\ \text{measurable and symmetric : }
 \|g^{(k)}\|_{k, p,q, \alpha, \beta}<\infty\right\}, \\  
&     \|g^{(k)}\|_{k, p,q, \alpha, \beta}:=\sup_{X_k, V_{k}} \l\l \alpha X_k\r\r^p \l\l \beta V_k\r\r^q \left|g^{(k)}(X_k,V_k)\right|. \label{X_k static norm}
 \end{align}
 Here by symmetric, we mean
 \begin{equation}\label{symmetry assumption}
g^{(k)}\circ\sigma_k=g^{(k)}, 
~ \text{ for any permutation $\sigma_k$ of pairs of variables $\{x_i, v_i\}_{i=1}^k$}.  
 \end{equation}

\noindent $\bullet$ Given $T>0$, $k\in \N$, $p,q >1$, $\alpha,\beta>0$, we define
\begin{align}   &X_{p,q,\alpha,\beta,T}^k:=C([0,T],X_{p,q,\alpha,\beta}^k),\label{X_k time} \\
& |||g^{(k)}(\cdot)|||_{k,p,q,\alpha,\beta,T}:=\sup_{t\in[0,T]}\|g^{(k)}(t)\|_{k,p,q,\alpha,\beta}  \label{X_k time norm}\\
& \hspace{3.1cm}=\sup_{t\in[0,T]} \sup_{X_k, V_{k}} \l\l \alpha X_k\r\r^p \l\l \beta V_k\r\r^q \left|g^{(k)}(t,X_k,V_k)\right|.\nonumber
\end{align}

In order to be able to look at a solution  of the infinite hierarchy \eqref{kwh} as a single object $F = (f^{(k)})_{k=1}^\infty$, we also introduce the following spaces:

\noindent $\bullet$ Given $p,q >1$, $\alpha,\beta>0$, $\mu \in \R$, we define 
\begin{align}\label{X_infty static}
 &\mathcal{X}_{p,q,\alpha, \beta,\mu}^\infty:=\left\{G=(g^{(k)})_{k=1}^\infty\in\prod_{k=1}^\infty X_{p,q,\alpha,\beta}^k\,:\,\|G\|_{p,q,\alpha, \beta,\mu}<\infty\right\},   \\
&\|G\|_{p,q,\alpha,\beta, \mu}=\sup_{k\in\N}e^{\mu k}\|g^{(k)}\|_{k,p,q,\alpha,\beta}  \label{X_infty static norm} 
\\
& \hspace{1.8cm}= \sup_{k\in\N}e^{\mu k}\sup_{X_k, V_{k}} \l\l \alpha X_k\r\r^p \l\l \beta V_k\r\r^q \left|g^{(k)}(X_k,V_k)\right|.
\end{align}

\noindent $\bullet$ Given $T>0$, $p,q >1$, $\alpha,\beta>0$, we define
\begin{align}\label{X_infty time}
&\mathcal{X}_{p,q,\alpha,\beta,\mu,T}^\infty:=C([0,T],\mathcal{X}_{p,q,\alpha,\beta,\mu}^\infty)\\
&|||G(\cdot)|||_{p,q,\alpha,\beta,\mu,T}:=\sup_{t\in[0,T]}\|G(t)\|_{p,q,\alpha,\beta,\mu} \label{X_infty time norm}\\
&\hspace{2.8cm} = \sup_{t\in[0,T]} \sup_{k\in\N}e^{\mu k}
\sup_{X_k, V_{k}} \l\l \alpha X_k\r\r^p \l\l \beta V_k\r\r^q \left|g^{(k)}(t,X_k,V_k)\right|. \nonumber
\end{align}

With definition of functional spaces in hand,  we are ready to give a precise definition of mild solutions to the  wave kinetic hierarchy \eqref{kwh}, which we motivate by the following observation.
For $k\in \N$, we denote the transport operator  acting on a function $g^{(k)}:[0,\infty)\times\R^{3k}\times\R^{3k}\to\R$  by
\begin{equation}\label{definition of transport}
 T_k^{s}g^{(k)}(t,X_k,V_k):=g^{(k)}(t,X_k-sV_k,V_k).
\end{equation}
With this notation, in analogy to \eqref{KWE - mild form we use}, a mild solution of the wave kinetic hierarchy \eqref{kwh} is formally given by
\begin{align}\label{mild solution formula}
   T^{-t}_k f^{(k)}(t) =  f_0^{(k)} + \int_0^{t} T_k^{-s }\C^{k+2} f^{(k+2)}(s) ds, \quad \forall k \in \N, \,\, \forall\,t\in[0,T].
\end{align}

\begin{definition}[Mild solution to the  wave kinetic  hierarchy]\label{def-kwh mild solution}
Let $T>0$,  $p,q>1$, $\alpha, \beta>0$,  $\mu\in\R$, and consider initial data $F_0=(f_0^{(k)})_{k=1}^\infty\in \mathcal{X}_{p,q,\alpha,\beta,\mu}^\infty$. A sequence $F = (f^{(k)})_{k=1}^\infty$ of measurable functions $f^{(k)}:[0,T]\times\R^{3k}\times\R^{3k}\to\R$ is called a  mild $\mu$-solution to  the  wave kinetic  hierarchy \eqref{kwh}  in $[0,T]$,  corresponding to the initial data $F_0$, if 
\begin{equation}\label{transport in space}
\mathcal{T}^{-(\cdot)}F(\cdot):=(T_k^{-(\cdot)}f^{(k)}(\cdot))_{k=1}^\infty\in\mathcal{X}_{p,q,\alpha,\beta,\mu,T}^\infty,
\end{equation}
and
\begin{equation}\label{Boltzmann hierarchy k}
T_k^{-t}f^{(k)}(t)=f_0^{(k)}+\int_0^t T_k^{-s}\C^{k+2}f^{(k+2)}(s)\,ds,\quad\forall\, t\in[0,T],\quad\forall k\in\N.
\end{equation}

\end{definition}

Our main result for the wave kinetic hierarchy \eqref{KWE} concerns its well-posedness, which means  existence and stability of solutions and their uniqueness. While uniqueness part does not require special structure of the initial data, our existence part utilizes the concept of admissible initial data. In order to be able to state the entire well-posedness result, we first recall the notion of admissibility.
\begin{definition}[Admissibility]\label{admissible data}
The set of admissible functions, denoted by $\mathcal{A}$, is defined by
\begin{align}\label{def-admisibility}
    \mathcal{A} := \Big\{ (g^{(k)})_{k=1}^\infty \in \prod_{k=1}^\infty L^1_{X_k,V_k}:& ~ \forall k\in\N ~\text{we have} ~ g^{(k)}\geq 0,~ g^{(k)} ~ \text{is symmetric  \eqref{symmetry assumption}}, \nonumber \\
    & 
    \int_{\R^{6k}}g^{(k)}\,dX_k\,dV_k=1, ~
     g^{(k)} = \int_{\R^{6}} g^{(k+1)} dv_{k+1} dx_{k+1}\Big\}.
\end{align}
\end{definition}

We are now ready to state our  main result on the well-posedness of the  wave kinetic  hierarchy.
\begin{theorem}[Global well-posedness for the wave kinetic hierarchy]\label{thm-gwp for kwh}
 Consider the wave kinetic hierarchy \eqref{kwh} in dimension $d=3$.
Let $T>0$, $p>1$, $q>3$, $\alpha, \beta>0$  and let $\mu\in\R$ be such that
        $e^{2\mu}>32C_{p,q,\alpha,\beta},$
    where $C_{p,q,\alpha,\beta}$ is given by \eqref{constant C}.
Consider  admissible initial data $F_0=(f_0^{(k)})_{k=1}^\infty\in \mathcal{A}\cap \mathcal{X}_{p,q,\alpha,\beta,\mu'}^\infty$, where $\mu'=\mu+\ln 2$. Then, there exists a unique mild $\mu$-solution $F=(f^{(k)})_{k=1}^\infty$ of the wave kinetic hierarchy \eqref{kwh}, with $f^{(k)}\ge 0$ for all $k$. In addition, the solution  satisfies the estimate
\begin{equation}\label{stability estimate hierarchy}
|||\mathcal{T}^{-(\cdot)}F(\cdot)|||_{p,q,\alpha,\beta,\mu,T}\leq  1.
\end{equation}
Moreover, the following $k$-particle conservation laws hold for any $t\in[0,T]$ and a.e. $X_k\in\R^{3k}$:

\begin{align}
    \text{If $p>3,\,q>4$}:\quad      \int_{\R^{3k}} f^{(k)}(t,X_k,V_k)  \,dV_k &= 1,\label{conservation of mass: KWH}\\
\text{If $p>3,\,q>5$}:\quad     \int_{\R^{3k}} V_k f^{(k)}(t,X_k,V_k) \,dV_k &= \int_{\R^{3k}} V_k f^{(k)}_0(X_k,V_k)  \,dV_k, \label{conservation of momentum: KWH}\\
\text{If $p>3, q>6$}:\quad     \int_{\R^{3k}} |V_k|^2f^{(k)}(t,X_k,V_k) \,dV_k &=\int_{\R^{3k}} |V_k|^2f_0^{(k)}(X_k,V_k) \,dV_k. \label{conservation of energy: KWH}
\end{align}

In the case that the initial data are tensorized i.e. $F_0=(f_0^{\otimes k})_{k=1}^\infty\in \mathcal{A} \cap \mathcal{X}^\infty_{p,q,\alpha,\beta,\mu'}$, there holds  the  stability estimate
\begin{equation}\label{stability estimate hierarchy KWH}
|||\mathcal{T}^{-(\cdot)}F(\cdot)|||_{p,q,\alpha,\beta,\mu,T}\leq \|F_0\|_{p,q,\alpha,\beta,\mu'} .  
\end{equation}
\end{theorem}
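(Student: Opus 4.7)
The theorem separates into two essentially independent pieces: (a)~existence, positivity, and the norm bound \eqref{stability estimate hierarchy} are produced \emph{constructively} from Theorem~\ref{thm-kwe is well posed} via the Hewitt--Savage theorem; (b)~uniqueness of mild $\mu$-solutions comes from a combinatorial board-game reorganization of the iterated Duhamel expansion combined with an a priori hierarchy estimate, and does not use admissibility. The conservation laws and the tensorized stability bound then fall out of~(a). For~(a), since $F_0\in\mathcal{A}\cap\mathcal{X}^\infty_{p,q,\alpha,\beta,\mu'}$ with $\mu'=\mu+\ln 2$, Proposition~\ref{hewitt-savage} (from \cite{ammipata24}) produces a unique Borel probability measure $\pi$ on $\mathcal{P}$ with $f_0^{(k)}=\int_\mathcal{P} h_0^{\otimes k}\,d\pi(h_0)$, and, by the polynomial-decay support condition in that proposition, $\pi$-a.e.\ $h_0$ lies in $X_{p,q,\alpha,\beta}$ with $\|h_0\|_{p,q,\alpha,\beta}\le e^{-\mu'}=\tfrac{1}{2}e^{-\mu}$. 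Since the hypothesis $e^{2\mu}>32\,C_{p,q,\alpha,\beta}$ gives $e^{-\mu}<(24\,C_{p,q,\alpha,\beta})^{-1/2}$, the smallness condition of Theorem~\ref{thm-kwe is well posed} is met by $\pi$-a.e.\ $h_0$ with $M=e^{-\mu}$.

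Theorem~\ref{thm-kwe is well posed} now provides, for $\pi$-a.e.\ $h_0$, a unique non-negative global mild solution $h(\,\cdot\,;h_0)$ of \eqref{KWE} with $|||T_1^{-(\cdot)}h(\cdot;h_0)|||_{p,q,\alpha,\beta,T}\le 2\|h_0\|_{p,q,\alpha,\beta}\le e^{-\mu}$. I then set
\[
  f^{(k)}(t):=\int_\mathcal{P} h(t;h_0)^{\otimes k}\,d\pi(h_0),\qquad k\in\N.
\]
For each $h_0$ in the support of $\pi$, the mild identity \eqref{def-kwe mild solution equation} for $h(\cdot;h_0)$, combined with the algebraic collapse $\C^{k+2}\bigl(h^{\otimes(k+2)}\bigr)=\sum_{j=1}^k\mathcal{C}[h](s,x_j,v_j)\prod_{l\neq j}h(s,x_l,v_l)$ (obtained by matching \eqref{gain loss form}--\eqref{L_3} with \eqref{C^k}--\eqref{CL3}) and the factor-by-factor action of $T_k^{-t}$, yields the tensorized mild identity $T_k^{-t}h(t)^{\otimes k}=h_0^{\otimes k}+\int_0^t T_k^{-s}\C^{k+2}h(s)^{\otimes(k+2)}\,ds$. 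Integrating this against $d\pi$, and swapping the $\pi$-integral with the time and velocity integrals -- justified by uniform pointwise dominations from the $X_{p,q,\alpha,\beta}$ bound combined with Lemma~\ref{appendix lemma on velocities weight} -- gives \eqref{Boltzmann hierarchy k} for $F=(f^{(k)})_{k=1}^\infty$. The tensorial inequality $\|h(t)^{\otimes k}\|_{k,p,q,\alpha,\beta}\le\|h(t)\|_{p,q,\alpha,\beta}^k\le e^{-\mu k}$ together with $\pi(\mathcal{P})=1$ gives $e^{\mu k}\|f^{(k)}(t)\|_{k,p,q,\alpha,\beta}\le 1$, which is exactly \eqref{stability estimate hierarchy}.

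For uniqueness, let $F,G$ be two mild $\mu$-solutions sharing initial data. Iterating \eqref{Boltzmann hierarchy k} $n$ times and subtracting produces, for each $k$,
\[
  T_k^{-t}\bigl(f^{(k)}-g^{(k)}\bigr)(t)=\sum_{j_1,\ldots,j_n}\int_{0\le t_n\le\cdots\le t_1\le t}\mathcal{K}_{k,n}(t,\underline{t}_n;\vec j)\bigl(f^{(k+2n)}-g^{(k+2n)}\bigr)(t_n)\,d\underline{t}_n,
\]
where each kernel $\mathcal{K}_{k,n}$ is a composition of transports with the $n$ single-slot operators $\C_{j_i,k+2i}$ and the outer sum contains $k(k+2)\cdots(k+2n-2)\sim 2^n n!$ terms -- a factorial blow-up no direct geometric series in $t$ can absorb. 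Adapting the board-game arguments of Klainerman--Machedon \cite{klma08} and T.~Chen--the third author \cite{chpa11}, in the $L^\infty_{x,v}$-based incarnation developed in \cite{ammipata24}, I regroup these terms into ``upper echelon'' equivalence classes whose total count is only $C^n$: this is Proposition~\ref{prop-equivalent classes}. Combining this with the hierarchy a priori estimate of Proposition~\ref{a priori estimate} (which lifts the velocity-weight technology of Proposition~\ref{a-priori estimate equation} and Lemma~\ref{appendix lemma on velocities weight} to arbitrary level $k$), and using that moving from level $k+2i-2$ to $k+2i$ costs $C_{p,q,\alpha,\beta}$ but gains $e^{-2\mu}$ from the $\mu$-weight on the output, one derives the self-improving bound
\[
  |||\mathcal{T}^{-(\cdot)}(F-G)|||_{p,q,\alpha,\beta,\mu,T_\ast}\le\bigl(C'\,T_\ast\,C_{p,q,\alpha,\beta}\,e^{-2\mu}\bigr)^n\,|||\mathcal{T}^{-(\cdot)}(F-G)|||_{p,q,\alpha,\beta,\mu,T_\ast}
\]
on any slice $[0,T_\ast]\subset[0,T]$. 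The hypothesis $e^{2\mu}>32\,C_{p,q,\alpha,\beta}$ makes the constant strictly less than one for a sufficiently small $T_\ast$ independent of $k$, so sending $n\to\infty$ forces $F\equiv G$ on $[0,T_\ast]$; iterating $T_\ast$-slices covers $[0,T]$.

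For the tensorized initial data $F_0=(f_0^{\otimes k})_k$ the measure $\pi$ collapses to the Dirac mass at $f_0$, so the construction reduces to $f^{(k)}(t)=f(t)^{\otimes k}$ with $f$ the WKE mild solution supplied by Theorem~\ref{thm-kwe is well posed}; then $\|f(t)^{\otimes k}\|_{k,p,q,\alpha,\beta}=\|f(t)\|_{p,q,\alpha,\beta}^k\le(2\|f_0\|_{p,q,\alpha,\beta})^k$ together with $\sup_k e^{\mu k}(2\|f_0\|)^k=\sup_k(e^{\mu'}\|f_0\|)^k=\|F_0\|_{p,q,\alpha,\beta,\mu'}$ yields \eqref{stability estimate hierarchy KWH}. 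The $k$-particle conservation laws \eqref{conservation of mass: KWH}--\eqref{conservation of energy: KWH} follow from the WKE conservation laws \eqref{conservation of mass KWE}--\eqref{conservation of energy KWE} applied factor-by-factor to $h(t;h_0)^{\otimes k}$ and then passed under $d\pi$ by Fubini, using $\pi(\mathcal{P})=1$ for the mass normalization. The technical heart of the entire argument is the board game of the third paragraph: because the quartic structure of \eqref{KWE collision operator} raises particles by two per Duhamel iteration rather than by one as in the Boltzmann hierarchy, both the enumeration of branches in the Dyson expansion and the design of the equivalence-class decomposition in Proposition~\ref{prop-equivalent classes} must be redesigned, so that the $C^n$ collapse absorbs the $2^n n!$ explosion while the per-step gain $e^{-2\mu}$ closes the geometric series under the hypothesis $e^{2\mu}>32\,C_{p,q,\alpha,\beta}$.
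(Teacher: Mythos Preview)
Your existence construction, the conservation laws, and the tensorized stability bound match the paper's approach exactly. The uniqueness strategy is also the same in spirit (board-game reorganization plus the hierarchy a priori estimate of Proposition~\ref{a priori estimate}), but the way you close the argument is incorrect and misrepresents the mechanism.

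The essential feature of Proposition~\ref{a priori estimate}, inherited from Proposition~\ref{a-priori estimate equation} via Lemma~\ref{time integral}, is that the bound
\[
\left\|\int_0^t T_k^{-s}\C^{\lambda}_{j,k+2}g^{(k+2)}(s)\,ds\right\|_{k, p,q,\alpha,\beta}\leq  C_{p,q,\alpha,\beta}\,|||T_{k+2}^{-(\cdot)}g^{(k+2)}|||_{k+2,p,q,\alpha,\beta,T}
\]
is \emph{uniform in $T$}: the dispersive time integral of Lemma~\ref{time integral} absorbs the integration over $[0,t]$ and produces no factor of $T$ (or $T_\ast$). Hence the iterated estimate (Proposition~\ref{prop-iterated a priori}) gives $(2C_{p,q,\alpha,\beta})^n$, and combining with the board-game count $\#\mathcal{M}_{n,k}\le 2^{k+3n-2}$ and the $2^n$ choices of signs yields, for zero initial data,
\[
\bigl\|T_k^{-t}f^{(k)}(t)\bigr\|_{k,p,q,\alpha,\beta}\le \tfrac{(2e^{-\mu})^k}{4}\bigl(32\,e^{-2\mu}C_{p,q,\alpha,\beta}\bigr)^n\,|||\mathcal{T}^{-(\cdot)}F|||_{p,q,\alpha,\beta,\mu,T}.
\]
The hypothesis $e^{2\mu}>32\,C_{p,q,\alpha,\beta}$ makes the base strictly less than one \emph{directly}, so letting $n\to\infty$ gives uniqueness on all of $[0,T]$ at once. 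Your displayed bound with the factor $T_\ast$ is not what these propositions produce, and the time-slicing iteration you describe is neither needed nor supported by the estimates you cite. This is not a minor cosmetic point: the $T$-independence of the a priori bound is precisely what makes the well-posedness global, and is the reason the numerical constant $32$ enters the hypothesis on $\mu$.
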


\subsection*{Organization of the paper}
In Section \ref{sec-WKE} we address global well-posedness of the wave kinetic equation. Global well-posedness of the wave kinetic hierarchy is proved in  Section \ref{sec-KWH}. In the appendix we gather auxiliary results used throughout the paper, including properties of tensorized functions in Appendix \ref{sec-tensorized}, properties of the resonant manifold in Appendix \ref{sec-resonant manifold}, and various integral estimates in Appendix \ref{sec-integral estimates}.

\subsection*{Acknowledgements}
N.P. and M.T. thank Luisa Velasco for helful discussions on  wave kinetic equations.
I.A. gratefully acknowledges support from the NSF grants No. DMS-2418020, DMS-2206618. J.K.M. gratefully acknowledges
support from the Provost’s Graduate Excellence Fellowship at The University of Texas at Austin
and from the NSF grants No. DMS-1840314, DMS-2009549 and DMS-2052789.
N.P. gratefully acknowledges
support from the NSF under grants No. DMS-1840314, DMS-2009549 and DMS-2052789. 
M.T. gratefully acknowledges support from the NSF grant DMS-2206187.

\section{Global well-posedness of the  wave kinetic equation}\label{sec-WKE}
 In this section, we obtain global in time existence, uniqueness and stability of mild solutions of the inhomogeneous wave kinetic equation \eqref{KWE} for initial data polynomially close to vacuum.  When the initial data are non-negative, we show that the corresponding solution remains non-negative, which is physically anticipated since the equation describes a localized point energy spectrum. Additionally, we prove that the solution conserves mass, momentum and energy for sufficiently decaying initial data.

The strategy for proving the global well-posedness of \eqref{KWE} relies on the following global in  time a-priori estimate.
\begin{proposition}\label{a-priori estimate equation}
 Let $p>1, q>3$, $\alpha,\beta>0$ and $T>0$. For any $j=0,1,2,3$, and $t\in[0,T]$,  the following bound holds
\begin{equation}\label{estimate for L eq}
\left\|\int_0^t T_1^{-s}L_j(g,h,l)(s)\,ds\right\|_{p,q,\alpha,\beta}  \le C_{p,q,\alpha,\beta} |||T_1^{-(\cdot)}g|||_{p,q,\alpha,\beta,T}|||T_1^{-(\cdot)}h|||_{p,q,\alpha,\beta,T}|||T_1^{-(\cdot)}l|||_{p,q,\alpha,\beta,T} 
\end{equation}
where $L_j$, with $j\in \{0,1,2,3\}$ are defined in \eqref{L_0}-\eqref{L_3}.
One can take
\begin{align}\label{constant C}
    C_{p,q,\alpha,\beta} = \frac{16 \, p \, \pi^3}{\alpha(p-1)} ~  \left(\frac{1}{3} + \frac{1}{q-3}\right)  ~\max\{\beta^q, \beta^{-3q}\}.
\end{align}
\end{proposition}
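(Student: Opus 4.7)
My plan is a direct pointwise bound on $T_1^{-s}L_j(g,h,l)(s,x,v)$ combined with the velocity-manifold estimate of Lemma \ref{appendix lemma on velocities weight}. I treat $L_1$ as the representative case; $L_0, L_2, L_3$ follow the same scheme with only the bookkeeping of which factor carries which shift changing. Commuting $T_1^{-s}$ inside the pointwise-in-$x$ integral defining $L_1$, I first write
\begin{align*}
T_1^{-s}L_1(g,h,l)(s,x,v) = \int_{\R^9}\delta(\Sigma)\delta(\Omega)\, g(s,x+sv,v)\,h(s,x+sv,v_2)\,l(s,x+sv,v_3)\,dv_1 dv_2 dv_3.
\end{align*}
The first factor equals $T_1^{-s}g(s,x,v)$; the others I rewrite via $h(s,x+sv,v_2)=T_1^{-s}h(s,x+s(v-v_2),v_2)$, and analogously for $l$. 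Applying the pointwise bound $|T_1^{-s}F(s,y,w)| \le |||T_1^{-(\cdot)}F|||_{p,q,\alpha,\beta,T}\langle\alpha y\rangle^{-p}\langle\beta w\rangle^{-q}$ to each factor, pulling the three norms out, multiplying by the outer weight $\langle\alpha x\rangle^p\langle\beta v\rangle^q$ (which cancels the $g$-contribution exactly), and integrating in $s\in[0,t]$, reduces the proof to bounding
\[ \int_0^t\!\int_{\R^9}\frac{\delta(\Sigma)\delta(\Omega)\,dv_1dv_2dv_3\,ds}{\langle\alpha(x+s(v-v_2))\rangle^p\langle\alpha(x+s(v-v_3))\rangle^p\langle\beta v_2\rangle^q\langle\beta v_3\rangle^q} \]
uniformly in $(x,v)$.

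My next step is to retain only one of the two $x$-decaying factors (say the $v_2$ one) and bound the other by $1$, then perform the $s$-integration via the 1D estimate
\[ \int_0^\infty \langle\alpha(y+sw)\rangle^{-p}\,ds \le \frac{p}{\alpha(p-1)\,|w|}, \]
valid for $p>1$ and proved by aligning coordinates along $w$ followed by a change of variables. This exchanges the surviving position weight for a singular velocity factor $(\alpha|v-v_2|)^{-1}$, and is where the $\frac{p}{\alpha(p-1)}$ in $C_{p,q,\alpha,\beta}$ originates. What remains is the purely velocity integral on the resonant manifold,
\[ \int_{\R^9}\frac{\delta(\Sigma)\delta(\Omega)}{|v-v_2|\,\langle\beta v_2\rangle^q\langle\beta v_3\rangle^q}\,dv_1dv_2dv_3, \]
which is exactly what Lemma \ref{appendix lemma on velocities weight} controls: a rescaling $v_i\mapsto v_i/\beta$ produces the $\max\{\beta^q,\beta^{-3q}\}$ prefactor, the spherical parametrization of $\{\Sigma=\Omega=0\}$ contributes the $\pi^3$, and the radial decay requiring $q>3$ yields the $\frac{1}{3}+\frac{1}{q-3}$ piece. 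The cases $L_0, L_2, L_3$ are handled identically once one of the shifted factors is selected for the time step.

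The principal obstacle is the $|v-v_2|^{-1}$ singularity inevitably generated by the $s$-integration: on a generic five-dimensional submanifold of $\R^9$ such a singularity would be non-integrable, so the argument depends crucially on the combined geometric structure of $\{\Sigma=\Omega=0\}$ and the extra velocity decay $q>3$ per variable afforded by the quartic (rather than cubic) collision operator. This is exactly the content of Lemma \ref{appendix lemma on velocities weight}, and is what restricts the argument to dimension $d=3$, as noted in the remark following Theorem \ref{thm-kwe is well posed}.
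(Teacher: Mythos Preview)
Your scheme works for $L_1,L_2,L_3$ but has a genuine gap at $L_0$. The crucial feature of $L_1$ (and of $L_2,L_3$) that you exploit is that one of the three inputs is evaluated at the base velocity $v$; this is what makes the $T_1^{-s}g$ factor sit at $(x,v)$ and cancel the outer weight $\langle\alpha x\rangle^p\langle\beta v\rangle^q$ exactly. In $L_0$ all three inputs sit at $v_1,v_2,v_3$, so after commuting $T_1^{-s}$ inside you only get factors $\langle\alpha(x+s(v-v_i))\rangle^{-p}$, $i=1,2,3$, none of which equals $\langle\alpha x\rangle^{-p}$. Your single-bracket time estimate $\int_0^\infty\langle\alpha(y+sw)\rangle^{-p}\,ds\lesssim(\alpha|w|)^{-1}$ gives no decay in $y$, so after discarding two position factors and integrating the third in $s$ you are left with an uncompensated $\langle\alpha x\rangle^{p}$ on the outside, and the $\sup_x$ is infinite. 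Thus the claim that ``$L_0$ follows the same scheme with only bookkeeping changes'' is false.

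The paper handles $L_0$ by retaining \emph{two} of the shifted position factors and using the orthogonality $(v-v_2)\cdot(v-v_3)=0$ on the resonant manifold (Lemma~\ref{lemma on resonant manifold}). This feeds into Lemma~\ref{time integral}, which for orthogonal directions $\xi,\eta$ returns
\[
\int_0^t\langle x+s\xi\rangle^{-p}\langle x+s\eta\rangle^{-p}\,ds\;\lesssim\;\frac{\langle x\rangle^{-p}}{\min\{|\xi|,|\eta|\}},
\]
the point being the recovered factor $\langle x\rangle^{-p}$ that cancels the outer weight. The price is a singular factor $\big(|v-v_1|\sqrt{1-(\hat u_{0,1}\cdot\hat u_{2,3})^2}\big)^{-1}$, and it is precisely this combination that Lemma~\ref{appendix lemma on velocities weight} is designed to absorb. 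Note also that, even in your $L_1$ computation, the velocity integral you land on carries the singularity $|v-v_2|^{-1}$ rather than the $|v-v_1|^{-1}$ appearing in Lemma~\ref{appendix lemma on velocities weight}; that integral is still finite, but it is not ``exactly'' what the lemma states and would need a separate short argument.
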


\begin{remark}
     The proof of this Proposition is inspired by the work of Toscani \cite{to86} for the Boltzmann equation. However, the presence of higher order multilinear operators $L_j$, $j \in \{0,1,2,3\}$ in the wave kinetic equation allows simplifications, as mentioned in subsection \ref{nutshell}.
\end{remark}

\begin{proof}

Without loss of generality, assume that
\begin{equation}\label{normalization}
|||T_1^{-(\cdot)}g|||_{p,q,\alpha,\beta,T}=|||T_1^{-(\cdot)}h|||_{p,q,\alpha,\beta,T}=|||T_1^{-(\cdot)}l|||_{p,q,\alpha,\beta,T}=1.
\end{equation}
 Given $v,v_1,v_2,v_3\in\R^3$, we will use the following notation
$$u_{i,j}:=v_i-v_j,\quad i,j\in\{0,1,2,3\},\,\, i\neq j,$$
where we abbreviate notation denoting $v_0=v$.

\noindent $\bullet$ {\em Proof for $L_0$.}
For any $x,v\in\R^{3}$ and $t\in[0,T]$, we have
\begin{align}
& I_{L_0}(t,x, v) :=  \l  \alpha x\r^{ p} \l \beta v\r^{q}
    \left| \int_0^t T_1^{-s}L_0(g,h,l)(s,x,v)\,ds\right|\nonumber\\
& = \l \alpha x\r^{p}\l \beta v\r^{q}  
    \left|\int_{0}^t  [L_0(g,h,l)](s, x+ sv, v) ds\right| \nonumber \\
& \leq\l \alpha x\r^{p}\l \beta v\r^{q} 
    \int_{0}^t \int_{\R^{9}} 
     dv_{1}\,dv_{2}\,dv_{3} ds ~
     \delta(\Sigma)\delta(\Omega) \left|g(s,x+sv, v_1)h(s,x+sv, v_2)l(s,x+sv, v_3)\right| 
    \nonumber\\
& =\l \alpha x\r^{p}\l \beta v\r^{q}  
    \int_{0}^t \int_{\R^{9}}\,dv_{1}\,dv_{2}\,dv_{3} \,ds ~ \delta(\Sigma)\delta(\Omega) \nonumber \\
&\hspace{1cm}  \left|T_1^{-s}g(s,x+s(v-v_1),v_1) \, T_1^{-s}h(s,x+s(v-v_2),v_2) \, T_1^{-s}l(s,x+s(v-v_3),v_3)\right|\\
&\leq \l \alpha x\r^{p}
    \int_{\R^{9}} \delta(\Sigma)\delta(\Omega) 
    \frac{\l \beta v\r^{q}}{\l \beta v_{1}\r^{q}\l \beta v_{2}\r^{q}\l \beta v_{3}\r^{q}} \left(\int_0^t\l\alpha x+ \alpha s u_{0,2}\r^{-p}\l \alpha x+ \alpha s u_{0,3}\r^{-p}\,ds\right)\,dv_{1}\,dv_{2}\,dv_{3},\label{a-priori 1 eq}
\end{align}
where we use the notation \eqref{manifold for equation} and where in the last inequality we used \eqref{normalization} and the fact that $\l \alpha x + \alpha s u_{0,1}\r^{-p} \le 1$.

Now, by \eqref{orthogonality}, \eqref{min estimate} from Lemma \ref{lemma on resonant manifold},  on the resonant manifold determined by $\Sigma$ and $\Omega$ we have 
\begin{equation}\label{orthogonality proof}
 u_{0,2}\cdot u_{0,3}=0,
\end{equation}
as well as
\begin{equation}\label{minimum estimate proof}
\min\{|u_{0,2}|,|u_{0,3}|\}\geq \frac{|u_{0,1}|}{2}\sqrt{1-(\hat{u}_{0,1}\cdot\hat{u}_{2,3})^2}.
\end{equation}
Thus, applying  Lemma \ref{time integral}  for $x :=\alpha x$, $\xi:=\alpha u_{0,2}$, $\eta:=\alpha u_{0,3}$, and using the above estimate, we obtain
\begin{align} \int_0^t\l \alpha x + \alpha s u_{0,2}\r^{-p} 
\l \alpha x + \alpha su_{0,3}\r^{-p}\,ds
   &\le  \frac{8p}{\alpha(p-1)}\,\frac{ \l \alpha x\r^{-p}}{ |u_{0,1}|\sqrt{1-(\hat{u}_{0,1}\cdot\hat{u}_{2,3})^2}}\label{a-priori 2 eq}.
\end{align}
Combining \eqref{a-priori 1 eq} and \eqref{a-priori 2 eq}, we obtain
\begin{align*}
I_{L_0}&(t,x, v)
\leq \frac{8p}{\alpha(p-1)}
     \int_{\R^{9}}\frac{dv_{1}\,dv_{2}\,dv_{3} ~ \delta(\Sigma)\delta(\Omega)}{|u_{0,1}|\sqrt{1-(\hat{u}_{0,1}\cdot\hat{u}_{2,
3})^2}}
    \frac{\l \beta v\r^{q}}{\l \beta v_{1}\r^{q}\l \beta v_{2}\r^{q}\l \beta v_{3}\r^{q}}.  
\end{align*}
Finally, we can use that
$$\frac{\l \beta v\r^{q}}{\l \beta v_{1}\r^{q}\l \beta v_{2}\r^{q}\l \beta v_{3}\r^{q}} 
\le \max\{\beta^q, \beta^{-3q}\} \frac{\l v\r^{q}}{\l  v_{1}\r^{q}\l v_{2}\r^{q}\l v_{3}\r^{q}},$$
together with \eqref{Uq three v} from Lemma \ref{appendix lemma on velocities weight}, to obtain 
\begin{align*}
I_{L_0}&(t,x,v)\leq \frac{8p }{\alpha(p-1)}  ~  2\pi^3 \left( \frac{1}{3} + \frac{1}{q-3}\right) ~ \max\{\beta^q, \beta^{-3q}\}. 
\end{align*}
Since $x,v,t$ were arbitrary, estimate \eqref{estimate for L eq} follows for $j=0$.

\noindent $\bullet$ {\it Proof for $L_1$.}
For any $x, v \in \R^3$ and $t\in[0,T]$, we have
\begin{align*}
I_{L_1}&(t,x, v) :=  \l  \alpha x  \r^{ p} \l \beta v\r^{q}
    \left| \int_0^t T_1^{-s}L_1(g,h,l) (s,x,v)\,ds\right|\nonumber\\
& \leq\l \alpha x\r^{p}\l \beta v\r^{q} 
    \int_{0}^t \int_{\R^{9}}  dv_{1}\,dv_{2}\,dv_{3} ds  ~ \delta(\Sigma)\delta(\Omega)    \left|g(s, x + sv, v)h(s, x + sv, v_2)l(s, x + sv, v_3)\right|   \nonumber\\
& =\l\alpha x\r^{p}\l \beta v\r^{q}  
    \int_{0}^t \int_{\R^{9}}\,dv_{1}\,dv_{2}\,dv_{3} \,ds ~
\delta(\Sigma)\delta(\Omega) \nonumber \\
&\hspace{1cm} \times \left|T_{1}^{-s}g(s, x,v) \, T_{1}^{-s}h(s, x+s(v-v_2),v_2) \,T_{1}^{-s}g(s, x+s(v-v_3),v_3)
     \right| \nonumber\\
&\leq 
    \int_{\R^{9}} 
    \,dv_{1}\,dv_{2}\,dv_{3}
    \delta(\Sigma)\delta(\Omega) 
    \left(\int_0^t\l \alpha x+ \alpha s u_{0,2}\r^{-p} \l \alpha x+ \alpha su_{0,3}\r^{-p}\,ds\right)  \frac{1}{\l \beta v_{2}\r^q  \l \beta v_{3}\r^{q}}.
\end{align*}

Applying again estimate \eqref{a-priori 2 eq} on the time integral above, the fact that $\l \beta v_{2}\r^{-q} \l \beta v_{3}\r^{-q} \le 
\max\{1, \beta^{-2q}\}
\l v_{2}\r^{-q} \l  v_{3}\r^{-q}$ together with the fact that $\l \alpha x\r^{-p}\le 1$, and  estimate \eqref{Uq two v} from Lemma \ref{appendix lemma on velocities weight}, we obtain
 \begin{align*}
 I_{L_1}(t,x, v) &\leq \frac{8p}{\alpha(p-1)}
     \max\{1, \beta^{-2q}\} \int_{\R^{9}}
   \frac{dv_{1}\,dv_{2}\,dv_{3} ~ \delta(\Sigma)\delta(\Omega)}{|u_{0,1}|\sqrt{1-(\hat{u}_{0,1}\cdot\hat{u}_{2,3})^2}\,\l  v_{2}\r^{q}\l  v_{3}\r^{q}}
    \\
&\le \frac{8p}{\alpha(p-1)}~ 2\pi^3 \left( \frac{1}{3} + \frac{1}{q-3}\right) ~\max\{1,\beta^{-2q}\},
\end{align*}
and estimate \eqref{estimate for L eq} follows for $j=1$.

\noindent $\bullet$ \textit{Proof for $ L_2$.} We note that the proof for $L_3$ is identical.
For any $x,  v\in \R^3$ and $t\in[0,T]$, we have
\begin{align}
 I_{L_2}&(t,x,v) :=\l  \alpha x\r^{ p} \l \beta v\r^{q}
    \left| \int_0^t T_1^{-s}L_2(g,h,l) (s,x,v)\,ds\right|\nonumber\\
& \leq\l \alpha x\r^{p}\l\beta v\r^{q} 
    \int_{0}^t \int_{\R^{9}}  dv_{1}\,dv_{2}\,dv_{3} ds ~ \delta(\Sigma)\delta(\Omega) \left|g(s, x+sv, v )h(s, x+sv, v_1 )h(s, x+sv, v_3 )\right| 
    \nonumber\\
& =\l \alpha x\r^{p}\l \beta v\r^{q}  
    \int_{0}^t\int_{\R^{9}}\,dv_{1}\,dv_{2}\,dv_{3} \,ds ~
    \delta(\Sigma)\delta(\Omega
    ) \nonumber \\
&\hspace{2cm} \times \left|T_{1}^{-s}g(s, x,v)|T_{1}^{-s}h(s, x+s(v-v_1),v_1)|T_{1}^{-s}l(s, x+s(v-v_3),v_3) \right| \nonumber \\
&\leq \int_{\R^{9}}  \,dv_{1}\,dv_{2}\,dv_{3}
    ~\delta(\Sigma)\delta(\Omega)    
    \frac{1}{\l \beta v_{1}\r^q  \l\beta v_{3}\r^{q}}
 \left(\int_{0}^t\l \alpha x+ \alpha s u_{0,1}\r^{-p} \,ds\right),
\end{align}
since $ \l \alpha x+ \alpha su_{0,3}\r^{-p}\le 1$. Now, by  Lemma \ref{one bracket} we have
\begin{align*}
\int_{-\infty}^{+\infty}\l \alpha x+ \alpha s u_{0,1}\r^{-p} \,ds   
\le \frac{2p}{\alpha(p-1)} | u_{0,1}|^{-1}.
\end{align*}
Therefore by using that  $\l \beta v_{3}\r^{-q} \le 1$ and the fact that $\l\beta v_1\r^{-q}\le \max\{1,\beta^{-q}\}\l v_1\r^{-q}$, we  have
\begin{align*}
 I_{L_2}(t,x, v) &\leq \frac{2p}{\alpha(p-1)}
     \max\{1,\beta^{-q}\} \int_{\R^{9}} \frac{\delta(\Sigma)\delta(\Omega)  }{ |u_{0,1}| \l v_1\r^{q} }
    \,dv_{1}\,dv_{2}\,dv_{3}\nonumber\\
& \le  \frac{8p}{\alpha(p-1)} ~ \pi^2 \left(\frac{1}{3} + \frac{1}{q-3}\right) ~ \max\{1,\beta^{-q}\},
\end{align*}
where in the last inequality we used Lemma \ref{delta convolution lemma} for $d=3$. Estimate \eqref{estimate for L eq} then follows for $j=2$.
\end{proof}

Now, using the above a-priori estimate in conjunction with the contraction mapping principle, we can prove the global well-posedness of \eqref{KWE} for arbitrary signed data which are polynomially close to vacuum. In order to guarantee non-negativity of the solution when the initial data are non-negative, one needs a more refined argument  taking advantage of the monotonicity properties of the equation. This can be achieved by employing a classical tool from the kinetic theory of particles, namely the Kaniel-Shinbrot iteration \cite{kash78, ilsh84, beto85, to86, to88,  al09, alga09}. Recently in \cite{am24}, this technique was applied for the first time in the context of wave turbulence by the first author of this paper, who addressed the problem for exponentially decaying initial data.

\begin{proof}[Proof of Theorem \ref{thm-kwe is well posed}]
In order to obtain existence, uniqueness, non-negativity and stability of solutions to \eqref{KWE}, we can essentially follow the strategy of the proofs in \cite{am24} (using  Proposition \ref{a-priori estimate equation} instead of the exponential type a priori estimate of \cite{am24} when needed), so we omit details of the proof,  and show the setup only. In particular, let us define the mapping $\Phi$ as follows
\begin{align}
    \Phi(g(t)) = f_0 + \int_0^t T_1^{-s} \mathcal{C}[T_1^s g](s) ds.
\end{align}
Then using Proposition \ref{a-priori estimate equation} it can be shown that $\Phi: B^M_{X_{p,q,\alpha,\beta,T}}\mapsto B^M_{X_{p,q,\alpha,\beta,T}}$, where
\begin{align}
 B^M_{X_{p,q,\alpha,\beta,T}} := 
 \{h\in X_{p,q,\alpha,\beta,T} \text{ \, with \, } ||| h |||_{p,q,\alpha, \beta, T} \le M \},
\end{align}
is a contraction in $B^M_{X_{p,q,\alpha,\beta,T}}$. Hence, there exists a unique fixed point $g \in  B^M_{X_{p,q,\alpha,\beta,T}}$ such that
\begin{align}
    g(t) = \Phi(g(t)).
\end{align}
By letting $f(\cdot) = T_1^{(\cdot)} g(\cdot)$, this is equivalent to
\begin{align}
T_1^{-t} f(t) = \Phi(T_1^{-t} f(t)) = f_0 + \int_0^t T_1^{-s} \mathcal{C}[f](s) dx,
\end{align}
and we also have $T_1^{-t} f \in B^M_{X_{p,q,\alpha,\beta,T}}$ as claimed. This completes proof of well-posedness.

It remains to verify the conservation laws \eqref{conservation of mass KWE}-\eqref{conservation of energy KWE} for the constructed solution $f$. For this, we define the space
$$L_v^{1,\ell}=\left\{g:\R^3\to\R \text{ measurable such that }\int_{\R^3}\l v\r^\ell g(v)\,dv<\infty\right\},$$
where $\ell\ge 0$.

Assume that $p>3$ and $q>4+i$, where  $i\in\{0,1,2\}$. By \eqref{condition on uniqueness equation}, for any $t\in [0,T]$ and a.e. $x,v\in\R^3$, we have
\begin{equation}\label{pointwise bound}\l \beta v\r^{1+i}f(t,x,v)\le M\l \alpha(x-vt)\r^{-p}\l \beta v\r^{1+i-q}\leq M\l \beta v\r^{1+i-q}.
\end{equation}
Since $q>4+i$, integrating \eqref{pointwise bound} in velocity we  obtain that $f(t,x,v)\in L^{1,1+i}_v$, for any $t\in[0,T]$ and a.e. $x\in\R^d$.
Therefore, by Lemma \ref{delta lemma},  the weak form \eqref{weak form} is valid for any test function $\phi$ with $|\phi(v)|\le C \l v\r^i$.  Hence,   using \eqref{weak form} and the resonant conditions, we obtain
\begin{align}
&\int_{\R^3} \mathcal{C}[f]\phi(v)\,dv=0, \quad\forall t\in[0,T],\text{  a.e. $x\in \R^3$} \label{collisional averaging use}, 
\end{align}
where $\phi=1$ if $i=0$, $\phi\in\{1,v\}$ if $i=1$, and $\phi\in\{1,v,|v|^2\}$ if $i=2$.

Additionally $f\in L^\infty([0,T],L^1_x L^{1,1+i}_v)$, which can be seen by integrating the first inequality of \eqref{pointwise bound} in $x,v$ and taking supremum in time:
\begin{align}
\sup_{t\in[0,T]}\int_{\R^{6}}\l \beta v\r^{1+i}f(t,x,v) \,dx\,dv&\leq M\sup_{t\in[0,T]}\int_{\R^3}\left(\int_{\R^3}\l \alpha(x-vt)\r^{-p}\,dx\right)\l \beta v\r^{1+i-q}\,dv\nonumber\\
& =M \alpha^{-3} \beta^{-3} \left(\int_{\R^3}\l x'\r^{-p}\,dx'\right)\left(
\int_{\R^3}\l v'\r^{1+i-q}\,dv'\right)<\infty,\label{KWE solution moments}
\end{align}
since $p>3$ and $q>4+i$. By Lemma \ref{delta lemma}, we also obtain that $Q(f,f)\in L^\infty([0,T],L^1_x L^{1,i}_v)$.

Now let $\phi=1$ if $i=0$, $\phi\in\{1,v\}$ if $i=1$, and $\phi\in\{1,v,|v|^2\}$ if $i=2$.
We integrate \eqref{def-kwe mild solution equation} in $x,v$ and use Fubini's theorem and \eqref{collisional averaging use}, to obtain that for any $t\in[0,T]$ we have 
\begin{align}
\int_{\R^{6}}f(t,x+tv,v)\phi(v)\,dx\,dv
&=\int_{\R^{6}}f_0(x,v)\phi(v)\,dxdv+\int_0^t\int_{\R^3}\int_{\R^3} \mathcal{C}[f](s,x+sv,v)\phi(v)dxdvds\nonumber\\
&=\int_{\R^{6}}f_0(x,v)\phi(v)\,dx\,dv+\int_0^t\int_{\R^3}\int_{\R^3} \mathcal{C}[f](s,x,v)\phi(v)\,dx\,dv\,ds\nonumber\\
&=\int_{\R^{6}}f_0(x,v)\phi(v)\,dx\,dv\nonumber.
\end{align}
Thus for any $t\in[0,T]$, we have $$\int_{\R^{6}}f(t,x,v)\phi(v)\,dx\,dv=\int_{\R^{6}}f(t,x+vt,v)\,\phi (v) \,dx\,dv=\int_{\R^{6}}f_0(x,v)\phi (v)\,dx\,dv,$$
and \eqref{conservation of mass KWE}-\eqref{conservation of energy KWE} follow.

\end{proof}

\section{Global well-posedness of wave kinetic hierarchy}\label{sec-KWH}

In this section we construct the unique global in time mild solution to the wave kinetic hierarchy \eqref{kwh} for admissible initial data and for a range of values of  the parameter $\mu$. The construction is inspired by a similar construction of mild solutions for the Boltzmann hierarchy in \cite{ammipata24}. The strategy consists of the following steps. First, since the initial data will be assumed to be admissible, we will be able to employ a Hewitt-Savage representation \cite{hs55} tailored to our norms, to express such  initial data as a convex combination of tensorized states under an appropriate probability measure $\pi$. Then, each element in the support of the measure $\pi$ can be treated as initial data to the wave kinetic {\it equation} for which we have a mild solution. Finally, by taking the same convex combination of these tensorized solutions to the wave kinetic {\it equation} under the same probability measure $\pi$, we will prove that one obtains a mild solution to the wave kinetic {\it hierarchy}. Finally, we will utilize the uniqueness result (Theorem \ref{thm-uniqueness}) to conclude that that mild solution is unique.

\subsection{Proof of existence for the wave kinetic hierarchy}
In this section we prove the existence part of Theorem \ref{thm-gwp for kwh}. In particular we claim the following.  
\begin{theorem}[Existence of solutions for the wave kinetic hierarchy]\label{thm-existence for kwh}
 Consider the wave kinetic hierarchy \eqref{kwh} in dimension $d=3$.
Let $T>0$, $p>1$, $q>3$, $\alpha, \beta>0$  and let $\mu\in\R$ be such that
        $e^{2\mu}>32C_{p,q,\alpha,\beta},$
    where $C_{p,q,\alpha,\beta}$ is given by \eqref{constant C}.
Consider  admissible initial data $F_0=(f_0^{(k)})_{k=1}^\infty\in \mathcal{A}\cap \mathcal{X}_{p,q,\alpha,\beta,\mu'}^\infty$, where $\mu'=\mu+\ln 2$. Then, there exists a  non-negative mild $\mu$-solution $F=(f^{(k)})_{k=1}^\infty$ of the wave kinetic hierarchy \eqref{kwh}. In addition, this solution  satisfies the estimate
\begin{equation}\label{stability estimate hierarchy existence}
|||\mathcal{T}^{-(\cdot)}F(\cdot)|||_{p,q,\alpha,\beta,\mu,T}\leq  1.
\end{equation}
\end{theorem}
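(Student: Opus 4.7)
The plan is to follow the two-step strategy outlined in subsection \ref{nutshell}: decompose the admissible initial data via Hewitt--Savage into tensorized states, solve the wave kinetic \emph{equation} along each ``fibre'' using Theorem \ref{thm-kwe is well posed}, and then superpose these solutions to obtain a solution of the hierarchy. Specifically, invoking Proposition \ref{hewitt-savage} on $F_0\in\mathcal{A}\cap\mathcal{X}_{p,q,\alpha,\beta,\mu'}^{\infty}$ produces a unique Borel probability measure $\pi$ on the set $\mathcal{P}$ of probability densities such that
\begin{equation*}
    f_0^{(k)} = \int_{\mathcal{P}} h_0^{\otimes k}\,d\pi(h_0),\qquad k\in\N,
\end{equation*}
with $\pi$ supported on $\{h_0\in\mathcal{P}:\|h_0\|_{p,q,\alpha,\beta}\le e^{-\mu'}\}$.

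For each $h_0$ in the support of $\pi$, I will check that the hypotheses of Theorem \ref{thm-kwe is well posed} are satisfied with the choice $M:=e^{-\mu}$. Indeed, the assumption $e^{2\mu}>32\,C_{p,q,\alpha,\beta}$ gives $M^2<\tfrac{1}{32C_{p,q,\alpha,\beta}}<\tfrac{1}{24C_{p,q,\alpha,\beta}}$, while $\mu'=\mu+\ln 2$ yields $\|h_0\|_{p,q,\alpha,\beta}\le e^{-\mu'}=M/2$. Theorem \ref{thm-kwe is well posed} then produces, for each such $h_0$, a unique nonnegative global mild solution $h(\cdot)$ of \eqref{KWE} enjoying the bound $|||T_1^{-(\cdot)}h(\cdot)|||_{p,q,\alpha,\beta,T}\le 2\|h_0\|_{p,q,\alpha,\beta}\le e^{-\mu}$. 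I then define the candidate solution to the hierarchy by
\begin{equation*}
    f^{(k)}(t):=\int_{\mathcal{P}} h(t)^{\otimes k}\,d\pi(h_0),\qquad k\in\N,\ t\in[0,T].
\end{equation*}
The desired estimate \eqref{stability estimate hierarchy existence} follows at once from the identity $T_k^{-t}(h(t)^{\otimes k})=(T_1^{-t}h(t))^{\otimes k}$ and the multiplicativity of the tensorized norm (from Appendix \ref{sec-tensorized}): integrating against $\pi$ and using the $\pi$-a.s.\ bound above gives $\|T_k^{-t}f^{(k)}(t)\|_{k,p,q,\alpha,\beta}\le e^{-\mu k}$, hence $|||\mathcal{T}^{-(\cdot)}F(\cdot)|||_{p,q,\alpha,\beta,\mu,T}\le 1$. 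Nonnegativity and (time) continuity transfer to $f^{(k)}$ by dominated convergence with respect to $\pi$.

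To verify the mild formulation \eqref{Boltzmann hierarchy k}, I will exploit the Leibniz structure of $\C^{k+2}$ on tensorized inputs. A direct computation from the definitions \eqref{CM}--\eqref{CL3}, using that only position $x_j$ and velocity $v_j$ are coupled to the three integration variables, gives
\begin{equation*}
    \C^{k+2}h^{\otimes(k+2)}(X_k,V_k)=\sum_{j=1}^{k}\mathcal{C}[h](x_j,v_j)\prod_{i\ne j}h(x_i,v_i),
\end{equation*}
so that the hierarchy collision on a factorized state is exactly the Leibniz derivative of $h^{\otimes k}$ along the flow $\partial_t h=\mathcal{C}[h]$. Combining this identity with the scalar mild equation \eqref{def-kwe mild solution equation} for $h$ and applying the product rule to $t\mapsto(T_1^{-t}h(t))^{\otimes k}$ yields the mild hierarchy identity for the tensorized solution $h(t)^{\otimes k}$. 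Integrating against $d\pi(h_0)$ then produces the mild identity \eqref{Boltzmann hierarchy k} for $f^{(k)}$.

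The main technical obstacle is justifying the various exchanges of integration---namely commuting $\int d\pi(h_0)$ past $\C^{k+2}$, past $\int_0^t\,ds$, and past the transport operator $T_k^{-s}$. To handle this, I will use the uniform $\pi$-a.s.\ bound $\|T_1^{-s}h(s)\|_{p,q,\alpha,\beta}\le e^{-\mu}$ together with the pointwise a priori estimate of Proposition \ref{a-priori estimate equation} (applied with $g=h=l=h(s)$) to produce a bound on $|T_k^{-s}\C^{k+2}h^{\otimes(k+2)}(s,X_k,V_k)|$ that is integrable in $s$ and uniform in $h_0\in\operatorname{supp}\pi$ on compact sets in $(X_k,V_k)$. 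Fubini's theorem then legitimizes every exchange, and collecting these conclusions completes the verification that $F=(f^{(k)})_{k=1}^{\infty}$ is the asserted nonnegative mild $\mu$-solution.
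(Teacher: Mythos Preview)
Your proposal is correct and follows essentially the same approach as the paper: Hewitt--Savage decomposition, application of Theorem \ref{thm-kwe is well posed} with $M=e^{-\mu}$ along each fibre, definition of $f^{(k)}(t)=\int_{\mathcal{P}}h(t)^{\otimes k}\,d\pi(h_0)$, and verification of the norm bound via tensorization. One small point the paper makes explicit that you omit: to even define the integral $\int_{\mathcal{P}}h(t)^{\otimes k}\,d\pi(h_0)$ you need the map $h_0\mapsto h(t)$ to be Borel measurable, which the paper obtains from the continuity-in-initial-data estimate \eqref{stability estimate}; you should add this remark. Otherwise your write-up actually gives more detail than the paper on the verification of the mild formulation (the paper defers this to ``a standard computation'').
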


We start by stating the Hewitt-Savage theorem that is tailored to our norms. It is exactly the same as Proposition 4.4. in \cite{ammipata24}, but we include it here so  that the paper is self-contained.
Similar versions of this theorem can be found in, for example,  \cite[Proposition 6.1.3]{gastte13}, \cite[Theorem 2.6]{foleso18}, \cite{rost22}, \cite{deha22}.

\begin{proposition}[Hewitt-Savage] \label{hewitt-savage}
    Suppose  $G=(g^{(k)})_{k=1}^\infty$ is admissible in the sense of Definition \ref{admissible data}. Then, there exists a unique Borel probability measure $\pi$ on the set of probability measures $\mathcal{P}$, where
    \begin{equation} \label{P definition}
\mathcal{P}=\left\{h\in L^1(\R^{2d}): h\geq 0,\quad\int_{\R^{2d}}h(x,v)\,dx\,dv=1\right\},
\end{equation}
     such that 
    \begin{equation}\label{representation of marginal}
         g^{(k)} = \int_{\mathcal{P}} h^{\otimes k} d\pi(h), \qquad  \forall \, k \in \N.
    \end{equation}
    If additionally $G\in \mathcal{X}_{p,q,\alpha,\beta,\mu'}^\infty$,
 for some $p,q>1$, $\alpha,\beta>0$ and $\mu'\in\R$, then
    \begin{equation}\label{support inclusion}
       \support(\pi)\subseteq \left\{ h\in \mathcal{P} : \Vert h \Vert_{p,q,\alpha,\beta} \leq e^{-\mu'} \right\}.
    \end{equation}
\end{proposition}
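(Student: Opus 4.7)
My approach splits the statement into two parts: the classical construction of the mixing measure $\pi$, and the quantitative support inclusion under the weighted $L^\infty$ hypothesis.

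\emph{Step 1: existence and uniqueness of $\pi$.} Admissibility says the $g^{(k)}$ are non-negative, symmetric, normalized probability densities on $\R^{2dk}$, compatible under the marginalization $g^{(k)}=\int g^{(k+1)}\,dx_{k+1}dv_{k+1}$. By the Kolmogorov extension theorem they form the finite-dimensional marginals of a unique symmetric (exchangeable) probability measure on $(\R^{2d})^\N$. The classical Hewitt--Savage theorem \cite{hs55} decomposes any such measure uniquely as $\int Q^{\otimes \N}\,d\pi(Q)$ over probability measures $Q$ on $\R^{2d}$; reading off the $k$-th marginal gives \eqref{representation of marginal}, and absolute continuity of the $g^{(k)}$ forces $\pi$-a.e.\ $Q$ to be a density $h \in \mathcal{P}$.

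\emph{Step 2: support inclusion.} Write $w(x,v):=\langle \alpha x\rangle^p\langle \beta v\rangle^q$. The weight tensorizes, $\langle\langle \alpha X_k\rangle\rangle^p\langle\langle \beta V_k\rangle\rangle^q = \prod_{j=1}^k w(x_j,v_j)$, so the hypothesis $G\in\mathcal{X}^\infty_{p,q,\alpha,\beta,\mu'}$ becomes $\|w^{\otimes k}g^{(k)}\|_{L^\infty}\le Ce^{-\mu' k}$ uniformly in $k$, with $C:=\|G\|_{p,q,\alpha,\beta,\mu'}$. For any non-negative continuous compactly supported $\psi$ on $\R^{2d}$ with $\int\psi=1$, insert $\psi^{\otimes k}w^{\otimes k}$ as a test function into \eqref{representation of marginal} and apply Fubini:
\[
\int_{\mathcal{P}} \Big(\int h(x,v)\,\psi(x,v)\,w(x,v)\,dx\,dv\Big)^k d\pi(h) \;=\; \int g^{(k)}\,\psi^{\otimes k}\,w^{\otimes k}\,dX_k\,dV_k \;\le\; C\,e^{-\mu' k}.
\]
For any $\varepsilon>0$, Markov's inequality at level $(1+\varepsilon)e^{-\mu'}$ gives $\pi\big(\{h:\int h\psi w>(1+\varepsilon)e^{-\mu'}\}\big)\le C(1+\varepsilon)^{-k}$, and sending $k\to\infty$ forces $\int h\psi w\le e^{-\mu'}$ for $\pi$-a.e.\ $h$. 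Running $\psi$ over a countable family of normalized indicators of balls $B(x_0,r)\times B(v_0,r)$ with $(x_0,v_0)$ in a countable dense set and $r$ rational, and intersecting the resulting countably many $\pi$-null sets, the Lebesgue differentiation theorem applied to $hw\in L^1_{\mathrm{loc}}$ yields $w(x,v)h(x,v)\le e^{-\mu'}$ for Lebesgue-a.e.\ $(x,v)$, i.e.\ $\|h\|_{p,q,\alpha,\beta}\le e^{-\mu'}$, for $\pi$-a.e.\ $h$. This is exactly \eqref{support inclusion}.

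\emph{Main obstacle.} Step 1 is classical once admissibility is unpacked into the exchangeability setup. The real content lives in Step 2: the representation \eqref{representation of marginal} is only an equality of $L^1$ densities, so it cannot be evaluated pointwise on the measure-zero diagonal $\{x_1=\cdots=x_k,\,v_1=\cdots=v_k\}$ — which is the most tempting direct route and would force a mollification argument. The test-function/Markov strategy above avoids any mollification of $g^{(k)}$ by first converting the $L^\infty$ hypothesis into an integrated moment estimate, extracting an essential-supremum bound on $h$ via the $k\to\infty$ limit, and then converting back to a pointwise bound on $hw$ by a single countable intersection of null sets together with Lebesgue differentiation.
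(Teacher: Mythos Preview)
The paper does not actually prove this proposition: it is quoted verbatim from \cite{ammipata24} (the authors' earlier Boltzmann-hierarchy paper), so there is no in-paper argument to compare against. Your proof is correct and supplies the details the present paper omits.

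Two minor remarks. First, in Step~1 the assertion that absolute continuity of every $g^{(k)}$ forces $\pi$-a.e.\ $Q$ to be a density is not entirely trivial (the one-marginal of a mixture of Diracs can be absolutely continuous); however, your Step~2 argument actually subsumes this point, since the bound $\int \psi\,w\,dQ\le e^{-\mu'}$ for all normalized ball-indicators $\psi$ already forces $Q$ to be absolutely continuous with density bounded by $e^{-\mu'}/w$. Second, to pass from ``$\pi$-a.e.\ $h$'' to ``$\support(\pi)\subseteq$'' you are implicitly using that the sublevel set $\{h:\|h\|_{p,q,\alpha,\beta}\le e^{-\mu'}\}$ is closed in $\mathcal{P}$; this is easy (an $L^1$-convergent sequence has an a.e.\ convergent subsequence, and the pointwise bound $wh\le e^{-\mu'}$ passes to the limit) but worth saying. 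A cosmetic point: you introduce $\psi$ as continuous and compactly supported, then take it to be a normalized indicator; either choice works, but pick one.
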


\begin{remark}\label{ball}
    We note that representation \eqref{representation of marginal}, and the support condition \eqref{support inclusion}   imply that $\mathcal{A}\cap \mathcal{X}_{p,q,\alpha,\beta,\mu'}^\infty=\mathcal{A}\cap B_{\mathcal{X}_{p,q,\alpha,\beta,\mu'}^\infty}$, where $B_{\mathcal{X}_{p,q,\alpha,\beta,\mu'}^\infty}$ denotes the unit ball of $\mathcal{X}_{p,q,\alpha,\beta,\mu'}^\infty$.
\end{remark}

We are now ready to prove Theorem \ref{thm-existence for kwh} in a similar manner to the proof of the analogous result for the Boltzmann hierarchy (see proof of \protect{\cite[Theorem 2.1]{ammipata24}}).

\begin{proof}[Proof of Theorem \ref{thm-existence for kwh}]  Let $\mu$ be such that $e^{2\mu}>32C_{p,q,\alpha,\beta}$, let $\mu' = \mu+\ln 2$, and consider an initial datum $F_0=(f_0^{(k)})_{k=1}^\infty\in \mathcal{A}\cap \mathcal{X}_{p,q,\alpha,\beta,\mu'}^\infty$. By the Hewitt-Savage theorem (Proposition \ref{hewitt-savage}), there exists  a Borel probability measure $\pi$ on $\mathcal{P}$ such that 
    \begin{equation}
     f_0^{(k)} = \int_{\mathcal{P}} h_0^{\otimes k} d\pi(h_0),
    \end{equation}
    and 
    \begin{equation}\label{support proof}
        \text{supp}(\pi)\subseteq \left\{ h_0 \in \mathcal{P} : \Vert h_0\Vert_{p,q,\alpha,\beta} \leq e^{-\mu'} \right\}.
    \end{equation}
Therefore,   for $\pi$-almost any $h_0 \in \mathcal{P}$, we have that $\Vert h_0 \Vert_{p,q,\alpha,\beta}\leq e^{-\mu'}= \frac{e^{-\mu}}{2}$.
Let $M=e^{-\mu}$ and note that then $\Vert h_0 \Vert_{p,q,\alpha,\beta} \le \frac{M}{2}$, and that due to the assumption $e^{2\mu}>32C_{p,q,\alpha,\beta}$, we have that $M = e^{-\mu} < (32 C_{p,q,\alpha,\beta})^{-1/2} <
(24 C_{p,q,\alpha,\beta})^{-1/2}$. Therefore, we can apply Theorem \ref{thm-kwe is well posed} with $M=e^{-\mu}$ and initial data   $h_0$ to conclude that there is  a mild solution $h(t)$ to the wave kinetic equation corresponding to the initial datum $h_0$. By \eqref{bound wrt initial data equation}, we have that this solution  satisfies 
   \begin{equation}\label{pf-bound on h}
   ||T_1^{-t}h(t)\|_{p,q, \alpha, \beta}\leq 2\|h_0\|_{p,q,\alpha,\beta} \le e^{-\mu},\quad\forall\, t\in[0,T].    
   \end{equation}   
Thanks to the continuity with respect to initial data  \eqref{stability estimate},  given $t\in[0,T]$, the map $h_0\mapsto h(t)$ is continuous and thus Borel measurable.

Now we construct an infinite sequence of functions, which we will show will be a mild solution of the wave kinetic hierarchy \eqref{kwh}. Namely, we define $F=(f^{(k)})_{k=1}^\infty$, by
    \begin{align}\label{kwh solution}
        f^{(k)}(t) : = \int_{\mathcal{P}} h(t)^{\otimes k} d\pi(h_0),\quad t\in[0,T],\,\, k\in\N.
    \end{align}

For any $k\in\N$ and $t\in[0,T]$,  we have
\begin{align*}
        e^{\mu k}\Vert T^{-t}_k f^{(k)}(t)\Vert_{k,p,q,\alpha,\beta} &\leq  e^{\mu k}\int_{\mathcal{P}} \Vert T^{-t}_k h^{\otimes k}(t)\Vert_{k,p,q,\alpha,\beta} d\pi(h_0)\nonumber \\
        & =  e^{\mu k}\int_{\mathcal{P}} \Vert T^{-t}_1 h(t)\Vert_{p,q,\alpha,\beta}^k d\pi(h_0) \nonumber \\
        &\leq 1,
    \end{align*}
where the equality follows from \eqref{tensorization of transport norm} and the last inequality follows from \eqref{pf-bound on h}. Since this estimate is true for any  $k\in\N$ and $t\in[0,T]$, the estimate \eqref{stability estimate hierarchy existence} follows, which also  implies that $\mathcal{T}^{-(\cdot)}F(\cdot)\in \mathcal{X}_{p,q,\alpha,\beta,\mu,T}^\infty$.
Now that we know that $F$ is in the right space, a standard computation shows that $F$, defined via \eqref{kwh solution}, is a mild $\mu$-solution to the wave kinetic hierarchy \eqref{kwh}, corresponding to initial data $F_0\in \mathcal{X}_{p,q,\alpha,\beta,\mu'}^\infty\subset \mathcal{X}_{p,q,\alpha,\beta,\mu}^\infty $. 
This solution is non-negative since $h(t)$ in \eqref{kwh solution} are solutions to the wave kinetic equation with non-negative initial data $h_0$.

\end{proof}

\subsection{Proof of uniqueness of mild solutions to the wave kinetic hierarchy}
The goal of this section is to prove uniqueness of solutions to the wave kinetic hierarchy stated as follows:
\begin{theorem}[Uniqueness of solutions to the wave kinetic hierarchy]\label{thm-uniqueness}
    Consider the wave kinetic hierarchy \eqref{kwh} in dimension $d=3$. Let $T>0$ and assume that $p>1, q>3$ and $\alpha, \beta >0$. Let $\mu \in \R$ be such that $e^{2\mu} > 32 C_{p,q,\alpha,\beta}$ for $C_{p,q,\alpha,\beta}$ as in \eqref{constant C}. Let $F_0 = (f_0^{(k)})_{k=1}^\infty \in X^\infty_{p,q,\alpha,\beta}$ and assume  $F = (f^{(k)})_{k=1}^\infty \in X^\infty_{p,q,\alpha,\beta,T}$ is a mild $\mu$-solution to  wave kinetic hierarchy \eqref{kwh} corresponding to the initial data $F_0$. Then $F$ is unique.

\end{theorem}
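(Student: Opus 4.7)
The plan is to iterate the Duhamel identity, apply the combinatorial board game argument of Proposition \ref{prop-equivalent classes} to tame the resulting factorial combinatorics, and close with the hierarchy a priori estimate of Proposition \ref{a priori estimate}. Suppose that $F=(f^{(k)})$ and $\widetilde F=(\tilde f^{(k)})$ are two mild $\mu$-solutions of \eqref{kwh} sharing the initial data $F_0$, and set $D:=F-\widetilde F=(d^{(k)})_{k=1}^\infty$; then $\mathcal{T}^{-(\cdot)}D(\cdot)\in \mathcal{X}_{p,q,\alpha,\beta,\mu,T}^\infty$, and subtracting the Duhamel identities \eqref{Boltzmann hierarchy k} yields
\begin{equation*}
T_k^{-t}d^{(k)}(t)=\int_0^t T_k^{-s}\C^{k+2}d^{(k+2)}(s)\,ds,\qquad k\in\N.
\end{equation*}
Iterating $n$ times expresses $T_k^{-t}d^{(k)}(t)$ as an $n$-fold time-ordered integral of nested collision operators applied to $d^{(k+2n)}$. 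Expanding each $\C^{k+2j}=\sum_{i=1}^{k+2(j-1)}(\C^+_{i,k+2j}-\C^-_{i,k+2j})$ as in \eqref{gain-loss sum} yields a sum with roughly $4^n\,k(k+2)\cdots(k+2n-2)$ summands, which grows factorially in $n$.

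Next, invoke Proposition \ref{prop-equivalent classes} to group these summands into equivalence classes, each class represented by a single nested integral in upper-echelon form whose domain is a time-ordered simplex. The number of classes is bounded by $C_0^{\,k+n}$ for a universal constant $C_0$ absorbing the binomial factor $\binom{k+n-1}{n}$ and the four possible sub-operator choices $L_0,L_1,L_2,L_3$ at each iteration. To each representative apply Proposition \ref{a priori estimate}, which is the hierarchy analogue of Proposition \ref{a-priori estimate equation} and produces a constant $C_{p,q,\alpha,\beta}$ per collision step. Composing $n$ such bounds gives, for every representative,
\begin{equation*}
\|(\text{representative})(t)\|_{k,p,q,\alpha,\beta}\leq C_{p,q,\alpha,\beta}^{\,n}\,|||\mathcal{T}^{-(\cdot)}d^{(k+2n)}|||_{k+2n,p,q,\alpha,\beta,T}.
\end{equation*}

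Combining the bound on the number of classes with the per-class estimate, and using $|||\mathcal{T}^{-(\cdot)}d^{(k+2n)}|||_{k+2n}\leq e^{-\mu(k+2n)}|||\mathcal{T}^{-(\cdot)}D(\cdot)|||_{p,q,\alpha,\beta,\mu,T}$, we arrive at
\begin{equation*}
e^{\mu k}\|T_k^{-t}d^{(k)}(t)\|_{k,p,q,\alpha,\beta}\leq |||\mathcal{T}^{-(\cdot)}D(\cdot)|||_{p,q,\alpha,\beta,\mu,T}\,C_0^{\,k}\bigl(C_0\,C_{p,q,\alpha,\beta}\,e^{-2\mu}\bigr)^n.
\end{equation*}
The hypothesis $e^{2\mu}>32\,C_{p,q,\alpha,\beta}$ is precisely what guarantees (with the constant $C_0\leq 32$ emerging from the board game bookkeeping) that $C_0\,C_{p,q,\alpha,\beta}\,e^{-2\mu}<1$, so that letting $n\to\infty$ forces $d^{(k)}(t)\equiv 0$ for every $k\in\N$ and $t\in[0,T]$, yielding uniqueness. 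The principal obstacle is the board game combinatorics: unlike in earlier implementations for the cubic or quintic Gross--Pitaevskii hierarchies and for the Boltzmann hierarchy of \cite{ammipata24}, the wave kinetic collision operator is a difference of four sub-operators $L_0,L_1,L_2,L_3$ with distinct variable-relabeling patterns \eqref{CM}--\eqref{CL3}, so the equivalence relation defining the classes must accommodate all four patterns simultaneously, while the per-step a priori estimate (which rests on the higher-order velocity weights supplied by Lemma \ref{appendix lemma on velocities weight}) must continue to produce the constant $C_{p,q,\alpha,\beta}$ uniformly across the composition.
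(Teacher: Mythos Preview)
Your proposal is correct and follows essentially the same route as the paper: reduce by linearity to zero initial data, iterate Duhamel, reorganize via the board game (Propositions \ref{prop-finitely many moves}, \ref{prop-equivalent classes}, and crucially \ref{sum within one equivalence class}, which you invoke only implicitly), split each $\C$ into $\C^\pm$, use positivity to extend each representative's time domain from the set $D$ to $[0,T]^n$, and then apply the iterated a priori estimate (Proposition \ref{prop-iterated a priori}) to obtain the geometric factor $(32\,C_{p,q,\alpha,\beta}\,e^{-2\mu})^n<1$. The only items your sketch leaves implicit are the domain-extension-via-positivity step and the precise constant bookkeeping ($2^{k+3n-2}$ echelon forms $\times\,2^n$ sign choices $\times\,(2C_{p,q,\alpha,\beta})^n$ from the a priori bound), both of which the paper spells out.
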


Since the wave kinetic hierarchy in linear, it suffices to show that if $F_0=0$ and $F$ is a mild solution, then $F=0$. In order to motivate the proof, let us start by recalling that a mild solution $F = (f^{(k)})_{k=1}^\infty $ to the wave kinetic hierarchy \eqref{kwh} is given by
\begin{align*}
  T^{-t}_k f^{(k)}(t) =  f_0^{(k)} + \int_0^{t} T_k^{-s }\C^{k+2} f^{(k+2)}(s) ds, \quad \forall k \in \N, \,\, \forall\,t\in[0,T].
\end{align*}
When initial data is zero ($F_0=0$), one can apply this formula iteratively $n\in\N$ times to obtain
\begin{align}\label{iterated Duhamel}
    T^{-t}_k f^{(k)}(t)
	 &= \int_0^{t}\int_{0}^{t_{k+2}} \cdots \int_0^{t_{k+2n-2}}  dt_{k+2n} \cdots dt_{k+4} dt_{k+2} \nonumber \\
     & \qquad T_k^{- t_{k+2}} \C^{k+2} T_{k+2}^{t_{k+2}- t_{k+4}} \C^{k+4} \cdots T^{t_{k+2n-2} - t_{k+2n}}_{k+2n-2} \C^{k+2n} f^{(k+2n)}(t_{k+2n}).
\end{align}
In order to prove that $F=0$, it will suffice to  show that $ \left\| T^{-t}_k f^{(k)}(t) \right\|_{p,q,\alpha,\beta}\lesssim C^n$, where $C<1$. Then by letting $n\to \infty$ one can conclude that for each $k\in \N$, $f^{(k)} =0$, thus completing the proof. Recall that each $\C^{k+2j}$ is a sum of $k+2j-2$ operators, and thus the iterated Duhamel formula \eqref{iterated Duhamel} contains a factorial number of terms. For each of them, we will use  an (iterated) a priori estimate (see Proposition \ref{prop-iterated a priori}). This fact alone is not enough to obtain the needed geometric growth estimate since the number of terms is factorial. This is where a board game argument inspired by \cite{klma08, chpa11} will come into play and will allow us to rearrange the factorial number of terms in \eqref{iterated Duhamel} in an exponential number of equivalence classes, and the sum over each equivalence class will be bounded by a constant, thus allowing us to obtain the geometric growth estimate $ \left\| T^{-t}_k f^{(k)}(t) \right\|_{p,q,\alpha,\beta}\lesssim C^n$.

We have organized this section into three subsections. The first one establishes the (iterated) a priori estimate. The second subsection discusses the combinatorial board game argument. Finally, in the third subsection, we combine the a priori estimate and the board game argument to prove the uniqueness of  solutions to the wave kinetic hierarchy as stated in Theorem \ref{thm-uniqueness}.

Throughout this section, we will use the following notation
\begin{align}\label{u_ij}
    u_{i,j}=v_i-v_j.
\end{align}
Also  for any vector $v \in \R^3$, we denote by $\hat{v}$ its unit vector
\begin{align}
    \hat{v} = \frac{v}{|v|}.
\end{align}

\subsubsection{A priori estimate}\label{section - a priori estimates} 

The following estimate is the hierarchy level analogue of the nonlinear a priori estimate presented in Proposition \ref{a-priori estimate equation}.

\begin{proposition} \label{a priori estimate}
 Let $T>0$, $p>1$ and $q>3$. Let $\C^\lambda_{j,k+2}$, where $\lambda \in \{ L_0, L_1, L_2, L_3\}$, be defined in \eqref{CM}-\eqref{CL3}.
 Then for any $t\in[0,T]$, $k\in\N$, $j\in\{1,\cdots, k\}$, and $\lambda \in \{ L_0, L_1, L_2, L_3\}$ we have
\begin{equation}\label{estimate for M}
\left\|\int_0^t T_k^{-s}\C^{\lambda}_{j,k+2}g^{(k+2)}(s)\,ds\right\|_{k, p,q,\alpha,\beta}\leq  C_{p,q,\alpha,\beta}|||T_{k+2}^{-(\cdot)}g^{(k+2)}(\cdot)|||_{k+2,p,q,\alpha,\beta,T}, 
\end{equation}
where $C_{p,q,\alpha,\beta}$ is the constant appearing in \eqref{constant C}.
\end{proposition}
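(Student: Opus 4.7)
The plan is to reduce Proposition \ref{a priori estimate} to the already-established nonlinear a priori estimate of Proposition \ref{a-priori estimate equation} by exploiting the tensorized nature of the transport $T_k$ and the localized action of $\C^\lambda_{j,k+2}$ at the $j$-th site.

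First, I would commute the backward transport with the collision operator to express $T_k^{-s}\C^\lambda_{j,k+2} g^{(k+2)}(t,X_k,V_k)$ as an integral of $T_{k+2}^{-s}g^{(k+2)}$ evaluated at appropriately shifted arguments. In the $L_0$ case, for example, since only positions $j, k+1, k+2$ and only the velocity at position $j$ are touched by $\C^{L_0}_{j,k+2}$, the resulting identity evaluates $T_{k+2}^{-s}g^{(k+2)}$ at spatial arguments $(x_1,\dots,x_j+s u_{j,k+1},\dots,x_k,\, x_j+s u_{j,k+2},\, x_j+s u_{j,k+3})$ and velocities $(V_k^{j,v_{k+1}}, v_{k+2}, v_{k+3})$, with the notation of \eqref{u_ij} and \eqref{V^j_k}. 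For $L_1$ the $j$-th position is unshifted, and for $L_2, L_3$ only two of the last three positions acquire shifts. This step is where the hierarchy collision structure mimics exactly the nonlinear operator.

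Second, I would apply the pointwise bound $|T_{k+2}^{-s}g^{(k+2)}|\leq |||T_{k+2}^{-(\cdot)}g^{(k+2)}|||_{k+2,p,q,\alpha,\beta,T}/(\l\l\alpha Y_{k+2}\r\r^p \l\l\beta U_{k+2}\r\r^q)$ and multiply through by $\l\l\alpha X_k\r\r^p\l\l\beta V_k\r\r^q$. The weights at the $k-1$ spectator indices $i\in\{1,\dots,k\}\setminus\{j\}$ cancel exactly, leaving only factors at the $j$-th, $(k+1)$-th, and $(k+2)$-th positions. What remains is identical to the single-site expression appearing in the proof of Proposition \ref{a-priori estimate equation}, under the dictionary $x\leftrightarrow x_j$, $v\leftrightarrow v_j$, and $v_1,v_2,v_3\leftrightarrow v_{k+1},v_{k+2},v_{k+3}$, since the resonant constraints $\Sigma_{j,k+2}=\Omega_{j,k+2}=0$ take exactly the same algebraic form as in \eqref{manifold for equation}.

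Finally, I would repeat the geometric analysis from the proof of Proposition \ref{a-priori estimate equation} verbatim in each of the four cases: orthogonality and the minimum estimate on the resonant manifold (Lemma \ref{lemma on resonant manifold}), followed by the time-integral bound (Lemma \ref{time integral} for $L_0,L_1$ and Lemma \ref{one bracket} for $L_2,L_3$), followed by integration in the three internal velocities (Lemma \ref{appendix lemma on velocities weight} for $L_0, L_1$ and Lemma \ref{delta convolution lemma} for $L_2, L_3$). Each case yields the same constant $C_{p,q,\alpha,\beta}$ from \eqref{constant C}, uniformly in $X_k, V_k, t$ and $j$, proving \eqref{estimate for M}. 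The main obstacle is purely bookkeeping: carefully verifying the commutation identity between $T_k$ and $\C^\lambda_{j,k+2}$, and checking that the $k-1$ spectator weights cancel cleanly so that the infinite-hierarchy estimate genuinely collapses to the single-site nonlinear estimate. No new analytic ingredient beyond those already used in Proposition \ref{a-priori estimate equation} is required.
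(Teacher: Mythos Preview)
Your proposal is correct and follows essentially the same approach as the paper: the paper also expands $T_k^{-s}\C^\lambda_{j,k+2}g^{(k+2)}$, applies the pointwise norm bound on $T_{k+2}^{-s}g^{(k+2)}$, observes that the weights at the spectator indices $i\neq j$ cancel, and then reproduces verbatim the single-site computations from Proposition \ref{a-priori estimate equation} (Lemma \ref{lemma on resonant manifold} followed by Lemma \ref{time integral} and Lemma \ref{appendix lemma on velocities weight} for $L_0,L_1$, and Lemma \ref{one bracket} with Lemma \ref{delta convolution lemma} for $L_2,L_3$). The paper in fact only writes out the $L_0$ case and refers back to Proposition \ref{a-priori estimate equation} for the remaining cases, exactly as you outline.
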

\begin{proof} 
The proof of the estimate \eqref{estimate for M} is analogous to the proof of the nonlinear estimate \eqref{estimate for L eq} at the level of the equation. As a result, we will only present the proof for $\lambda=L_0$ to demonstrate this. The other cases are proved in a similar spirit to the proof of Proposition \ref{a-priori estimate equation}.

Fix $k\in\N$, $j\in\{1,\cdots, k\}$ and $\lambda = L_0$. Recall notation introduced in \eqref{u_ij}.
For any $X_k,V_k\in\R^{3k}$ we have
\begin{align}
I_{L_0}&(X_k, V_k) :=  \l\l  \alpha X_k\r\r^{ p} \l\l \beta V_k\r\r^{q}
    \left| \int_0^t T_k^{-s}\C_{j,k+2}^{L_0} g^{(k+2)}(s,X_k,V_k)\,ds\right|\nonumber\\
& = \l\l \alpha X_k\r\r^{p}\l\l \beta V_k\r\r^{q}  
    \left|\int_{0}^t  [\C^{L_0}_{j,k+2} g^{(k+2)}](s, X_k+ sV_k, V_k) ds\right| \nonumber \\
& \leq\l\l \alpha X_k\r\r^{p}\l\l \beta V_k\r\r^{q} 
    \int_{0}^t \int_{\R^{9}} 
     dv_{k+1}\,dv_{k+2}\,dv_{k+3} ds ~
     \delta(\Sigma_{j,k+2})\delta(\Omega_{j,k+2}) \nonumber \\
& \hspace{1cm}   \times \left|g^{(k+2)}(s, X_k + sV_k, x_j + sv_j, x_j+sv_j, ~V_k^{j,v_{k+1}} ,v_{k+2},v_{k+3})\right| 
    \nonumber\\
& =\l\l \alpha X_k\r\r^{p}\l\l \beta V_k\r\r^{q}  
    \int_{0}^t \int_{\R^{9}}\,dv_{k+1}\,dv_{k+2}\,dv_{k+3} \,ds ~ \delta(\Sigma_{j,k+2})\delta(\Omega_{j,k+2}) \nonumber \\
&\hspace{1cm} \left|T_{k+2}^{-s}g^{(k+2)}(s, X_k + s(V_k-V_k^{j,v_{k+1}}),\, x_j + su_{j,k+2}, \, x_k+su_{j,k+3}, ~
    V_{k}^{j,v_{k+1}} ,v_{k+2},v_{k+3}) \right| \nonumber \\
&\leq \l \alpha x_j\r^{p}|||T_{k+2}^{-(\cdot)}g^{(k+2)}(\cdot)|||_{k+2,p,q,\alpha,\beta,T} 
    \int_{\R^{9}} \delta(\Sigma_{j,k+2})\delta(\Omega_{j,k+2}) 
    \frac{\l \beta v_j\r^{q}}{\l \beta v_{k+1}\r^{q}\l \beta v_{k+2}\r^{q}\l \beta v_{k+3}\r^{q}}\nonumber \\
&\hspace{1cm}\times \left(\int_0^t\l\alpha x_j+ \alpha s u_{j,k+2}\r^{-p}\l \alpha x_j+ \alpha s u_{j,k+3}\r^{-p}\,ds\right)\,dv_{k+1}\,dv_{k+2}\,dv_{k+3},\label{a-priori 1}
\end{align}
where in the last inequality we used that $\l \alpha x_j + \alpha s u_{j, k+1}\r^{-p} \le 1$.

Now, by \eqref{orthogonality}, \eqref{min estimate} from Lemma \ref{lemma on resonant manifold},  on the resonant manifold determined by $\Sigma_{j,k+2}$ and $\Omega_{j,k+2}$ we have 
\begin{equation*}
 u_{j,k+2}\cdot u_{j,k+3}=0,
\end{equation*}
as well as
\begin{equation}\label{minimum estimate proof hierarchy}
\min\{|u_{j,k+2}|,|u_{j,k+3}|\}\geq \frac{|u_{j,k+1}|}{2}\sqrt{1-(\hat{u}_{j,k+1}\cdot\hat{u}_{k+2,k+3})^2}.
\end{equation}

Applying  Lemma \ref{time integral}  for $x =\alpha x_j$, $\xi=\alpha u_{j,k+2}$, $\eta=\alpha u_{j,k+3}$, and using the above estimate \eqref{minimum estimate proof hierarchy}, yields
\begin{align} \int_0^t\l \alpha x_j + \alpha s u_{j,k+2}\r^{-p} 
\l \alpha x_j + \alpha su_{j,k+3}\r^{-p}\,ds
   &\le  \frac{8p}{\alpha(p-1)}\,\frac{ \l \alpha x_j\r^{-p}}{ |u_{j,k+1}|\sqrt{1-(\hat{u}_{j,k+1}\cdot\hat{u}_{k+2,k+3})^2}}\label{a-priori 2}.
\end{align}

Combining \eqref{a-priori 1} and \eqref{a-priori 2}, we obtain
\begin{align*}
I_{L_0}&(X_k, V_K)
\leq \frac{8p}{\alpha(p-1)}
    |||T_{k+2}^{-(\cdot)}g^{(k+2)}(\cdot)|||_{k+2,p,q,\alpha,\beta,T} \int_{\R^{9}}
    dv_{k+1}\,dv_{k+2}\,dv_{k+3} ~ \delta(\Sigma_{j,k+2})\delta(\Omega_{j,k+2}) \nonumber\\
&\hspace{4cm} \times   \frac{1}{|u_{j,k+1}|\sqrt{1-(\hat{u}_{j,k+1}\cdot\hat{u}_{k+2,k+3})^2}}
    \frac{\l \beta v_j\r^{q}}{\l \beta v_{k+1}\r^{q}\l \beta v_{k+2}\r^{q}\l \beta v_{k+3}\r^{q}}.
\end{align*}
Finally, we can use that
$$\frac{\l \beta v_j\r^{q}}{\l \beta v_{k+1}\r^{q}\l \beta v_{k+2}\r^{q}\l \beta v_{k+3}\r^{q}} 
\le \max\{\beta^q, \beta^{-3q}\} \frac{\l v_j\r^{q}}{\l  v_{k+1}\r^{q}\l v_{k+2}\r^{q}\l v_{k+3}\r^{q}},$$
together with Lemma \ref{appendix lemma on velocities weight}, to obtain 
\begin{align*}
I_{L_0}&(X_k, V_K)\leq \frac{8p }{\alpha(p-1)}  ~  2\pi^3 \left( \frac{1}{3} + \frac{1}{q-3}\right) ~ \max\{\beta^q, \beta^{-3q}\} ~  |||T_{k+2}^{-(\cdot)}g^{(k+2)}(\cdot)|||_{k+1,p,q,\alpha,\beta,T}.
\end{align*}
Since $t, X_k,V_k$ were arbitrary, estimate \eqref{estimate for M} follows. 
\end{proof}

We next derive an iterated estimate by recursively applying Proposition \ref{a priori estimate}. We start by recalling the notation introduced in \eqref{gain}-\eqref{loss}.

\begin{proposition}\label{prop-iterated a priori}
 Let $T>0$, $p>1, q>3$ and $k,n \in \N$ with $n\ge 2$. For each $\ell \in \{1, 2, \dots, n\}$, let $j_{k+2\ell}$ be a number in the set $\{1,2, \dots, k+2\ell-2\}$ and let $\pi_{k+2\ell} \in \{+,-\}$. Then we have
 \begin{align}\label{iterated a priori}
&\Bigg\|\int_{[0,T]^n} 
 T_k^{- t_{k+2}} \C^{\pi_{k+2}}_{j_{k+2},k+2} T_{k+2}^{t_{k+2}- t_{k+4}} \C^{\pi_{k+4}}_{j_{k+4}, k+4} \cdots \nonumber\\
& \hspace{1cm} \cdots T^{t_{k+2n-2} - t_{k+2n}}_{k+2n-2} \C^{\pi_{k+2n}}_{j_{k+2n},k+2n} f^{(k+2n)}(t_{k+2n}) ~ dt_{k+2n} \cdots dt_{k+4} dt_{k+2} \Bigg\|_{k,p,q,\alpha,\beta} \nonumber\\
&\le \left( 2C_{p,q,\alpha,\beta} \right)^n
    \Big|\Big|\Big| T_{k+2n}^{-(\cdot)} f^{(k+2n)}\Big|\Big|\Big|_{k+2n, p,q,\alpha,\beta, T},
 \end{align}
 where $C_{p,q,\alpha,\beta}$ is given in \eqref{constant C}.
\end{proposition}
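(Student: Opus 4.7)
The plan is a clean induction on $n$, peeling off one time integration at each step and invoking Proposition \ref{a priori estimate} once. Since by \eqref{gain}--\eqref{loss} each $\C^{\pi_{k+2\ell}}_{j_{k+2\ell},k+2\ell}$ is a sum of exactly two operators of the form $\C^{\lambda}_{j,k+2\ell}$ with $\lambda\in\{L_0,L_1,L_2,L_3\}$, each invocation of the single-step estimate costs a factor of $2C_{p,q,\alpha,\beta}$; compounded over $n$ steps, this produces the claimed bound $(2C_{p,q,\alpha,\beta})^n$.

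For the base case $n=1$, one writes $\C^{\pi_{k+2}}_{j_{k+2},k+2}$ as a sum of two $\C^{\lambda}_{j_{k+2},k+2}$ operators, uses the triangle inequality, and applies Proposition \ref{a priori estimate} to each term. For the inductive step, assuming the estimate holds for $n-1$ at every level, I would introduce the auxiliary $(k+2)$-particle function
\begin{equation*}
g^{(k+2)}(t_{k+2}) := \int_{[0,T]^{n-1}} T_{k+2}^{t_{k+2}-t_{k+4}} \C^{\pi_{k+4}}_{j_{k+4},k+4} T_{k+4}^{t_{k+4}-t_{k+6}} \cdots \C^{\pi_{k+2n}}_{j_{k+2n},k+2n} f^{(k+2n)}(t_{k+2n})\, dt_{k+2n}\cdots dt_{k+4},
\end{equation*}
so that the left-hand side of \eqref{iterated a priori} coincides with $\bigl\| \int_0^T T_k^{-t_{k+2}} \C^{\pi_{k+2}}_{j_{k+2},k+2} g^{(k+2)}(t_{k+2})\, dt_{k+2}\bigr\|_{k,p,q,\alpha,\beta}$. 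Using the semigroup identity $T_{k+2}^{t_{k+2}-t_{k+4}} = T_{k+2}^{t_{k+2}} T_{k+2}^{-t_{k+4}}$, the quantity $T_{k+2}^{-t_{k+2}} g^{(k+2)}(t_{k+2})$ becomes independent of $t_{k+2}$ and coincides precisely with the $(n-1)$-fold iterated integral at level $k+2$. The inductive hypothesis (with $k\to k+2$, $n\to n-1$) then yields
\begin{equation*}
|||T_{k+2}^{-(\cdot)} g^{(k+2)}|||_{k+2,p,q,\alpha,\beta,T} \leq (2C_{p,q,\alpha,\beta})^{n-1} |||T_{k+2n}^{-(\cdot)} f^{(k+2n)}|||_{k+2n,p,q,\alpha,\beta,T},
\end{equation*}
and a final application of Proposition \ref{a priori estimate} (split once more into the two $L_\lambda$ pieces) to the outer integral produces the remaining factor of $2C_{p,q,\alpha,\beta}$, closing the induction.

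The only delicate bookkeeping concerns the transport operators $T_k, T_{k+2}, \ldots$ at different particle levels: one must carefully verify that the decomposition $T_{k+2}^{t_{k+2}-t_{k+4}} = T_{k+2}^{t_{k+2}} T_{k+2}^{-t_{k+4}}$ cleanly isolates the outermost time variable, so that the remainder exactly matches the shape required by the inductive hypothesis. Beyond this mild algebraic check, the argument is a purely formal recursion and needs no analytic input beyond Proposition \ref{a priori estimate}.
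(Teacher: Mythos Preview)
Your argument is correct and follows essentially the same approach as the paper: you peel off the outermost time integral, use the semigroup identity $T_{k+2}^{-t_{k+2}}T_{k+2}^{t_{k+2}-t_{k+4}}=T_{k+2}^{-t_{k+4}}$ to make the remainder independent of $t_{k+2}$, apply Proposition~\ref{a priori estimate} once (picking up a factor $2C_{p,q,\alpha,\beta}$ from the two $\C^\lambda$ pieces), and iterate. The paper phrases this as ``repeat $(n-1)$ more times'' rather than a formal induction, and since the statement assumes $n\ge 2$ you might take $n=2$ as your base case, but these are cosmetic differences only.
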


\begin{proof} Let
\begin{align*}
I := &\Bigg\|\int_{[0,T]^n} 
 T_k^{- t_{k+2}} \C^{\pi_{k+2}}_{j_{k+2},k+2} T_{k+2}^{t_{k+2}- t_{k+4}} \C^{\pi_{k+4}}_{j_{k+4}, k+4} \cdots \\
& \hspace{1cm} \cdots T^{t_{k+2n-2} - t_{k+2n}}_{k+2n-2} \C^{\pi_{k+2n}}_{j_{k+2n},k+2n} f^{(k+2n)}(t_{k+2n}) ~ dt_{k+2n} \cdots dt_{k+4} dt_{k+2} \Bigg\|_{k,p,q,\alpha,\beta}\\
& = \Bigg\| \int_0^T  T_k^{- t_{k+2}} \C^{\pi_{k+2}}_{j_{k+2},k+2} ~ G^{(k+2)} (t_{k+2}) ~ dt_{k+2}\Bigg\|_{k,p,q,\alpha,\beta}
\end{align*}
where
\begin{align*}
     G^{(k+2)} (t_{k+2}) = \int_{[0,T]^{n-1}}  & dt_{k+2n} \cdots dt_{k+4}  \\
&T_{k+2}^{t_{k+2}- t_{k+4}} \C^{\pi_{k+4}}_{j_{k+4}, k+4} \cdots T^{t_{k+2n-2} - t_{k+2n}}_{k+2n-2} \C^{\pi_{k+2n}}_{j_{k+2n},k+2n} f^{(k+2n)}(t_{k+2n}).
\end{align*}
By Proposition \ref{a priori estimate}, we have
\begin{align*}
I & \le 2C_{p,q,\alpha,\beta} \Big|\Big|\Big| T_{k+2}^{-(\cdot)} G^{(k+2)}\Big|\Big|\Big|_{k+2, p,q,\alpha,\beta, T}\\
& = 2C_{p,q,\alpha,\beta} \sup_{t_{k+2} \in[0,T]}
\Bigg\| 
T_{k+2}^{-t_{k+2}} \int_{[0,T]^{n-1}}   dt_{k+2n} \cdots dt_{k+4}  \\
&\hspace{2.5cm} T_{k+2}^{t_{k+2}- t_{k+4}} \C^{\pi_{k+4}}_{j_{k+4}, k+4} \cdots T^{t_{k+2n-2} - t_{k+2n}}_{k+2n-2} \C^{\pi_{k+2n}}_{j_{k+2n},k+2n} f^{(k+2n)}(t_{k+2n})
\Bigg\|_{k+2, p,q,\alpha,\beta}\\
& =  2C_{p,q,\alpha,\beta} 
\Bigg\| 
\int_{[0,T]^{n-1}}   dt_{k+2n} \cdots dt_{k+4}  \\
&\hspace{3cm} T_{k+2}^{- t_{k+4}} \C^{\pi_{k+4}}_{j_{k+4}, k+4} \cdots T^{t_{k+2n-2} - t_{k+2n}}_{k+2n-2} \C^{\pi_{k+2n}}_{j_{k+2n},k+2n} f^{(k+2n)}(t_{k+2n})
\Bigg\|_{k+2, p,q,\alpha,\beta}.
\end{align*}
Repeating these calculations $(n-1)$ more times, we obtain
\begin{align*}
    I \le \left( 2C_{p,q,\alpha,\beta} \right)^n
    \Big|\Big|\Big| T_{k+2n}^{-(\cdot)} f^{(k+2n)}\Big|\Big|\Big|_{k+2n, p,q,\alpha,\beta, T}.
\end{align*}
\end{proof}

\subsubsection{Combinatorial board game argument}
In this section, inspired by \cite{klma08} and \cite{chpa11}, we devise a combinatorial board game argument which, together with the a priori estimates from the previous section, will allow us to prove uniqueness of solutions to the wave kinetic hierarchy \eqref{kwh}. 
As indicated earlier in this section, the wave kinetic hierarchy is linear, and thus uniqueness boils down to proving that the solution corresponding to zero initial data is zero. Recall that such a solution has the expansion \eqref{iterated Duhamel}, and that the integrand in the iterated time integral has a factorial number of terms since each  operator $\C^{k+2\ell}$ is a sum of $k+2\ell-2$ operators. The goal of this section is to find a way to reorganize the integral \eqref{iterated Duhamel} in such a way that allows us to estimate it with an exponential number of terms instead. In order to achieve this, we start by introducing some notation, starting with the notation for the integrand in \eqref{iterated Duhamel}.

\begin{definition}
Let $k,n \in \N$ with $n\ge 2$,  and let $\t_{k+2n} = (t_{k+2}, t_{k+4}, \dots , t_{k+2n}) \in \R_+^n$. We define the operator $J_{n,k}$ by
\begin{align}
J_{n,k}(\t_{k+2n}) f^{(k+2n)} 
= T_k^{- t_{k+2}} \C^{k+2} T_{k+2}^{t_{k+2}- t_{k+4}} \C^{k+4} \cdots T^{t_{k+2n-2} - t_{k+2n}}_{k+2n-2} \C^{k+2n} f^{(k+2n)}(t_{k+2n}).
\end{align}
\end{definition}

By \eqref{C^k}, each  $\C^{k+2\ell}$ is a sum of $k+2\ell -2$ terms, so we can write the $J_{n,k}$ operator as a sum
\begin{align}
    J_{n,k}(\t_{k+2n}) = \sum_{\mu \in M_{n,k}}  J_{n,k}(\t_{k+2n}, \mu),
\end{align}
where
\begin{align}\label{set M}
    M_{n,k} &= \Big\{\mu: \{k+2, k+4, \dots, k+2n\} \to \{1, 2, \dots, k+2n-2 \}, \text{ with } \, \forall j \,\, \mu(j) < j-1  \Big\}
\end{align}
and 
\begin{align}\label{J mu}
J_{n,k}(\t_{k+2n}, \mu) f^{(k+2n)}
&:= T_k^{- t_{k+2}} \C_{\mu(k+2),k+2} T_{k+2}^{t_{k+2}- t_{k+4}} \C_{\mu(k+4), k+4} \cdots \nonumber \\
& \hspace{2cm} \cdots T^{t_{k+2n-2} - t_{k+2n}}_{k+2n-2} \C_{\mu(k+2n),k+2n} f^{(k+2n)}(t_{k+2n}).
\end{align}

We next define operator $\I_{n,k}$ as a time integral of the operator $J_{n,k}$. In what follows, we use the following notation  for the group of all permutations of elements $\{k+2, \dots,k+ 2n\}$:
\begin{align}\label{S}
S_{n,k} = S(\{k+2, \dots,k+ 2n\}).
\end{align}

\begin{definition}
\label{I integrals}
Let $k,n \in \N$ with $n\ge 2$. For each  $(\mu, \sigma) \in M_{n,k}\times S_{n,k}$
define the operator  $\mathcal{I}_{n,k}(\mu,\sigma)$  by the expression 
\begin{equation}\label{single iterated Duhamel}
    \mathcal{I}_{n,k}(\mu,\sigma) : = \int_{t \geq t_{\sigma(k+2)} \geq t_{\sigma(k+4)} \geq \dots \geq t_{\sigma(k+2n)}\geq 0} J_{n,k}( \t_{k+2n};\mu) dt_{k+2n} dt_{k+2n-2} \dots dt_{k+2}.
\end{equation}
\end{definition}
Note that it is equivalent to write 
\begin{equation}
    \mathcal{I}_{n,k}(\mu,\sigma)  = \int_{t\geq t_{k+2} \geq t_{k+4} \geq \dots \geq t_{k+2n}\geq 0} J_{n,k}( \sigma^{-1}(\t_{k+2n});\mu) dt_{k+2n} dt_{k+2n-2} \dots dt_{k+2},
\end{equation}
where  
\begin{equation}
    \sigma^{-1}(\t_{k+2n}) : = ( t_{\sigma^{-1}(k+2)}, \dots , t_{\sigma^{-1}(k+2n)}).
\end{equation}

The operator $\I_{n,k}$ can be represented on a $(k+2n-2) \times n$ board with carved in  names $\C_{i,j}$, with $1\le i \le j-2$, for $j\in\{k+2, k+4, ..., k+2n\}$, arranged as in \eqref{board}. The board also has an extra top row that keeps track of the order of times in the operator $\I_{n,k}$. For each $(\mu, \sigma) \in M_{n,k}\times S_{n,k}$ we associate a state of the game, where $\mu$ determines which  $\C$ operators on the board are circled, and $\sigma$ determines the order of the times in the top row. 
\begin{equation} \label{board}
\left[ 
\begin{array}{ccccc}  
t_{\sigma^{-1}(k+2)}     & t_{\sigma^{-1}(k+4)} & ......&  t_{\sigma^{-1}(k+2n)}  & \vspace{10pt} \\
\C_{1,k+2}    & \C_{1,k+4}          & ......&  \C_{1, k+2n} & \text{row } 1\\
\C_{2,k+2} & \C_{2,k+4}  & ......&   \C_{2, k+2n} & \text{row } 2 \\
...         & ...& ......& ... & ... \\
...        & \Circled{\C_{\mu(k+4), k+4}} & .....& ... &  ... \\
...         & ...& ......& ... & ... \\
\Circled{\C_{\mu(k+2), k+2}}        & ... & ......& ... &  ... \\
...         & ...& ......& ... & ... \\
    \C_{k, k+2}                   & \C_{k,k+4}          & ......& ... & ... \\
    0                                 & \C_{k+1,k+4}       & ......&  \Circled{\C_{\mu(k+2n),k+2n}}& ... \\
    0                                 & \C_{k+2,k+4}       & ......& ... & ... \\
    ...                                & 0                        & ......& ... & ...\\ 
    ...                                & ...                       & ......& ...& ...\\
  0                                & 0                       & ......&  \C_{k+2n-2,k+2n}& \text{row } k+2n-3\\
    0                                 & 0                        & ......&  \C_{k+2n-2,k+2n} & \text{row } k+2n-2\vspace{8pt}\\
    \text{col } k+2	     &   \text{col } k+4  & ......&  \text{col } k+2n &
\end{array} 
\right] \,.
\end{equation}

The strategy is to define an "acceptable move" on the board which will allow us to move the circles in such a way that the value of the iterated Duhamel integral \eqref{single iterated Duhamel}  is invariant. Ultimately, this will  enable us to define an equivalence relation between integrals of the type \eqref{single iterated Duhamel}, and the sum over all integrals in the same equivalence class will be estimated by a single time integral (see Proposition \ref{sum within one equivalence class}), while the number of equivalence classes will be exponential (see Proposition \ref{number of echelon forms}), thus resolving the issue of the factorial number of terms.

Inspired by \cite{chpa11} we define an acceptable move as follows.
\begin{definition}[Acceptable move] \label{def-acceptable move}
Let $k,n \in \N$ with $n\ge 2$, and let $M_{k,n}$ and $S_{n,k}$ be defined as in \eqref{set M} and \eqref{S}. Suppose $(\mu,\sigma) \in M_{n,k} \times S_{n,k}$ is a state of the game for which one has $\mu(j+2) < \mu(j)$ for some $j \in \{ k+2, k+4, \dots, k+2n-2\}$. An acceptable move transforms $(\mu, \sigma)$ to $(\mu', \sigma')$, where
\begin{equation}\label{acceptable move rule}
\begin{aligned}
    &\mu' = (j-1, j+1) \circ (j,j+2) \circ \mu \circ (j, j+2),\\
    &\sigma'=(j,j+2) \circ \sigma.
\end{aligned}
\end{equation}
\end{definition}
Note that under an acceptable move, due to $\mu(j+2) <\mu(j) <j-1$ and $\mu(j-1) < j-2$, we have
\begin{equation}\label{mu' calculation}
\begin{aligned}
\mu'(\ell) = 
\begin{cases}
  (j-1, j+1) \circ (j,j+2) \circ \mu(\ell) & \text{for \,} \ell  \in \{k+2, k+4, \dots k+2n \} \setminus \{j, j+2\}, \\
\mu(j+2) & \text{for \,} \ell = j,\\
\mu(j) & \text{for \,} \ell = j+2,
\end{cases}
\end{aligned}
\end{equation}
and 
\begin{equation}\label{sigma' calculation}
\begin{aligned}
\sigma'^{-1}(\ell) = 
\begin{cases}
 \sigma^{-1}(\ell) & \text{for \,} \ell  \in \{k+2, k+4, \dots k+2n \} \setminus \{j, j+2\}, \\
\sigma^{-1}(j+2) & \text{for \,} \ell = j,\\
\sigma^{-1}(j) & \text{for \,} \ell = j+2.
\end{cases}
\end{aligned}
\end{equation}

Therefore, the effect of an acceptable move to the board is:
\begin{itemize}
\item it exchanges positions of times in column $j$ and column $j+2$ (due to $\sigma'$), and
\item it exchanges positions of  circles in column $j$ and  column $j+2$ (due to $\mu'$), and
\item  it exchanges  positions of  circles in row $j $ and row $j+2$ if such rows exist (due to $\mu'$) (if one of those rows does not exist, no changes are made at the level of rows), and
\item  it exchanges  positions of  circles in row $j-1 $ and row $j+1$ if such rows exist (due to $\mu'$) (if one of those rows does not exist, no changes are made at the level of rows).
\end{itemize}

Before we show that an acceptable move doesn't change the value of the integral $\I_{n,k}(\mu, \sigma)$, let us introduce the following operator.
\begin{definition}[Operator $S_{j,j+2}$]
 We define  $S_{j, j+2}$ to be an operator that exchanges $x$ variables in positions $j-1,j$ with $x$ variables in positions  $j+1, j+2$, and exchanges $v$ variables in positions $j-1,j$ with $v$ variables in positions  $j+1,j+2$. In other words, for $\ell > j+1$, we define
\begin{align}\label{S operator new}
    & \Big[S_{j, j+2} f^{(\ell)}\Big](X_\ell, V_\ell) :=  
    f^{(\ell)}(x_1, \dots, x_{j-2},  {\color{blue} x_{j+1}, x_{j+2}} {\color{red}, x_{j-1} ,x_j }, x_{j+3}, \dots, x_\ell; \nonumber \\
    & \hspace{4.5cm}  v_1, \dots, v_{j-2},{\color{blue} v_{j+1}, v_{j+2}}, {\color{red} v_{j-1}, v_j }, v_{j+3}, \dots, v_\ell),
\end{align}
\end{definition}

Now we are ready to state and prove the invariance of integrals $\I_{n,k}(\mu, \sigma)$ under acceptable moves.
\begin{proposition}[Acceptable move invariance]
 Suppose $(\mu, \sigma)$ and $(\mu',\sigma')$ are as in Definition \ref{def-acceptable move}.
Then $(\mu',\sigma') \in M_{n,k}\times S_{n,k}$ and 
\begin{align*}
    \I_{n,k}(\mu, \sigma) = \I_{n,k}(\mu', \sigma').
\end{align*}
\end{proposition}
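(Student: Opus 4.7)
The proof splits into a combinatorial step verifying $(\mu', \sigma') \in M_{n,k} \times S_{n,k}$ and an analytic step establishing $\I_{n,k}(\mu, \sigma) = \I_{n,k}(\mu', \sigma')$. For the combinatorial step, $\sigma' \in S_{n,k}$ is automatic since $S_{n,k}$ is a group, and for $\mu'$ I verify $\mu'(\ell) < \ell - 1$ for every $\ell \in \{k+2, k+4, \dots, k+2n\}$ using \eqref{mu' calculation}. The cases $\ell = j$ and $\ell = j+2$ follow at once from $\mu(j+2) < \mu(j) < j-1$. For $\ell \notin \{j, j+2\}$, the composition $(j-1, j+1)(j, j+2)$ acts non-trivially on $\mu(\ell)$ only when $\mu(\ell) \in \{j-1, j, j+1, j+2\}$; combined with $\mu(\ell) < \ell - 1$ and $\ell \neq j, j+2$, this forces $\ell \geq j+4$, so $\mu'(\ell) \leq j+2 \leq \ell-2 < \ell -1$.

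For the analytic step, I begin with the first form of \eqref{single iterated Duhamel} and perform the unit-Jacobian change of variables $t_j \leftrightarrow t_{j+2}$. This transforms the integration domain into $\{t \geq t_{\sigma'(k+2)} \geq \dots \geq t_{\sigma'(k+2n)} \geq 0\}$ and replaces $J_{n,k}(\t_{k+2n};\mu)$ by $J_{n,k}(\pi\t_{k+2n};\mu)$, where $\pi\t_{k+2n}$ denotes the time vector with entries at positions $j$ and $j+2$ exchanged. The equality $\I_{n,k}(\mu, \sigma)=\I_{n,k}(\mu',\sigma')$ therefore reduces to the pointwise operator identity
\begin{equation*}
J_{n,k}(\pi\t_{k+2n};\mu)\, f^{(k+2n)} = J_{n,k}(\t_{k+2n};\mu')\, f^{(k+2n)}
\end{equation*}
for every admissible time vector and every symmetric $f^{(k+2n)}$.

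I would prove this identity by inserting the particle-label relabeling $\Pi_\tau$ associated with the involution $\tau=(j-1,j+1)(j,j+2)$ on the right of $J_{n,k}(\pi\t_{k+2n};\mu)$, which is admissible because $\Pi_\tau f^{(k+2n)}=f^{(k+2n)}$ by \eqref{symmetry assumption}, and propagating $\Pi_\tau$ leftward through the operators at levels $\ell>j+2$. Two commutations do the work: $T_\ell^s \Pi_\tau = \Pi_\tau T_\ell^s$, because free transport is particle-wise, and $\C_{i,\ell}\Pi_\tau=\Pi_\tau \C_{\tau(i),\ell}$, valid whenever $\ell\geq j+4$ since then $\tau$ acts entirely within $\{1,\dots,\ell-2\}$; this intertwining can be checked directly on each of the four variants $\C^{L_0},\C^{L_1},\C^{L_2},\C^{L_3}$ by unfolding the integral representations \eqref{CM}--\eqref{CL3}. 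Each such intertwining converts a factor $\C_{\mu(\ell),\ell}$ into $\C_{\mu'(\ell),\ell}$, reproducing \eqref{mu' calculation} for $\ell>j+2$.

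The decisive and most delicate step is the interface between levels $j+2$ and $j$. Here $\tau$ involves the indices $j+1,j+2$ that are integrated out by $\C_{\mu(j+2),j+2}$, so the clean intertwining above is unavailable; moreover, the time swap has altered the transport exponents at levels $j-2$, $j$ and $j+2$ relative to $J_{n,k}(\t_{k+2n};\mu')$. I would close the argument using the group property of the transport semigroup to split the modified transports (for example, $T_{j-2}^{t_{j-2}-t_{j+2}} = T_{j-2}^{t_{j-2}-t_j}\,T_{j-2}^{t_j-t_{j+2}}$) into pieces matching those in $J_{n,k}(\t_{k+2n};\mu')$, and then directly verify the local identity by unfolding \eqref{CM}--\eqref{CL3} and exploiting the disjointness of the index sets $\{\mu(j),j-1,j\}$ and $\{\mu(j+2),j+1,j+2\}$ (guaranteed by $\mu(j+2)<\mu(j)<j-1$). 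The technical heart of the proof is this bookkeeping across the four collision variants, including matching delta functions and shifted position arguments; the residual relabeling introduced by $\Pi_\tau$ on the $(j+2)$-particle input is absorbed by the symmetry that $\tau$ swaps the pair of particles $(j-1,j)$ created by $\C_{\mu(j),j}$ with the pair $(j+1,j+2)$ created by $\C_{\mu(j+2),j+2}$, which—thanks to the disjointness—is precisely what exchanging the two collisions amounts to.
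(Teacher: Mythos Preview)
Your proposal is correct and follows essentially the same route as the paper. Your relabeling operator $\Pi_\tau$ is precisely the paper's $S_{j,j+2}$ (see \eqref{S operator new}); your intertwining $\C_{i,\ell}\Pi_\tau=\Pi_\tau \C_{\tau(i),\ell}$ for $\ell\ge j+4$ is the paper's identity \eqref{identity 2} (Lemma~\ref{identity 2 lemma}); and your ``interface'' step is the paper's identity \eqref{identity 1} (Lemma~\ref{identity 1 lemma}), the only cosmetic difference being that the paper starts from $\I_{n,k}(\mu',\sigma')$ and moves $S_{j,j+2}$ rightward, while you start from $\I_{n,k}(\mu,\sigma)$ and propagate $\Pi_\tau$ leftward.
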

\begin{proof} Suppose $j$ is as in Definition \ref{def-acceptable move}, that is, suppose $j \in \{ k+2, k+4, \dots, k+2n-2\}$ is such that $\mu(j+2) < \mu(j)$. We first write what $\I_{n,k} (\mu', \sigma') f^{(k+2n)}$ is:
\begin{align}\label{I'}
\I_{n,k}& (\mu', \sigma') f^{(k+2n)}
     = \int_{t_k \ge t_{k+2} \ge t_{k+4} \ge \dots \ge t_{k+2n} \ge 0} \, 
     J_{n,k} (\sigma'^{-1}(\t_{k+2n}); \mu')
     \, dt_{k+2n} \dots dt_{k+4} dt_{k+2} \nonumber\\
     & = \int_{t_k \geq t_{k+2} \geq t_{k+4} \geq \dots \geq t_{k+2n}\geq 0} 
  		 T_k^{- t_{\sigma'^{-1}(k+2)}} \C_{\mu'(k+2),k+2} \, T_{k+2}^{t_{\sigma'^{-1}(k+2)}- t_{\sigma'^{-1}(k+4)}} \C_{\mu'(k+4),k+4} \nonumber  \\
     & \qquad \qquad \dots T_{j-2}^{t_{\sigma'^{-1}(j-2)} - t_{\sigma'^{-1}(j)}}
    \C_{ \mu'(j) ,j } 
    \,\, T_{j}^{t_{\sigma'^{-1}(j)}- t_{\sigma'^{-1}(j+2)}} 
    \C_{ \mu'(j+2),j+2}  \nonumber 
    \,\, T_{j+2}^{t_{\sigma'^{-1}(j+2)} -t_{\sigma'^{-1}(j+4)}} 
  \dots \nonumber \\
  & \qquad \qquad \cdots\,  T^{t_{\sigma'^{-1}(k+2n-2)} - t_{\sigma'^{-1}(k+2n)}}_{k+2n-2} \C_{\mu'(k+2n), k+2n}
  \, f^{(k+2n)}(t_{\sigma'^{-1}(k+2n)}) dt_{k+2n}  \dots dt_{k+2}.
\end{align}

According to properties \eqref{mu' calculation} and \eqref{sigma' calculation}, for the operators appearing before  $T_{j-2}$ in \eqref{I'}, each $\mu'$ and $\sigma'^{-1}$ can be replaced by $\mu$ and $\sigma^{-1}$, respectively. For operators appearing after $T_{j+2}$ in \eqref{I'} one can drop primes from $\sigma'^{-1}$ and turn $\mu'$  to $(j-1, j+1) \circ (j,j+2)\circ \mu$. Finally, for the operators appearing between and including $T_{j-2}$ and $T_{j+2}$ in \eqref{I'}, 
the evaluation of $\sigma'^{-1}$ and $\mu'$ at $j$ (resp. $j+2$) turns into an evaluation of $\sigma^{-1}$ and $\mu$ at $j+2$ (resp. $j$). For all other $\sigma'^{-1}$, one can drop the prime. Therefore, we can rewrite $\I_{n,k} (\mu', \sigma')$ in terms of $\mu$ and $\sigma$ as follows

\begin{align*}
\I_{n,k}& (\mu', \sigma') f^{(k+2n)}
      = \int_{t_k \geq t_{k+2} \geq t_{k+4} \geq \dots \geq t_{k+2n}\geq 0} 
  		 T_k^{- t_{\sigma^{-1}(k+2)}} \C_{\mu(k+2),k+2} \, T_{k+2}^{t_{\sigma^{-1}(k+2)}- t_{\sigma^{-1}(k+4)}} \C_{\mu(k+4),k+4} \nonumber  \\
     & \qquad \dots T_{j-2}^{t_{\sigma^{-1}(j-2)} - t_{\sigma^{-1}(j+2)}}
    \C_{ \mu(j+2) ,j } 
    \,\, T_{j}^{t_{\sigma^{-1}(j+2)}- t_{\sigma^{-1}(j)}} 
    \C_{ \mu(j),j+2}  \nonumber 
    \,\, T_{j+2}^{t_{\sigma^{-1}(j)} -t_{\sigma^{-1}(j+4)}} 
  \dots \nonumber \\
  & \qquad \cdots\,  T^{t_{\sigma^{-1}(k+2n-2)} - t_{\sigma^{-1}(k+2n)}}_{k+2n-2} \C_{(j-1, j+1)\circ (j,j+2)\circ\mu(k+2n), k+2n}
  \, f^{(k+2n)}(t_{\sigma^{-1}(k+2n)}) dt_{k+2n}  \dots dt_{k+2}.
\end{align*}

In order to complete the proof of the proposition, it suffices to show two identities:
\begin{align}
    & T_{j-2}^{a - {\color{red} b}} \C_{{\color{blue}\alpha},j} T_{j}^{{\color{red}b}-{\color{green}c}} \C_{{\color{blue}\beta},j+2} T_{j+2}^{ {\color{green}c}- d} = T_{j-2}^{a - {\color{green}c}} \C_{{\color{blue}\beta}, j} T_j^{{\color{green}c}-{\color{red}b}} \C_{{\color{blue}\alpha}, j+2} T_{j+2}^{ {\color{red}b} -d} S_{j,j+2}, \label{identity 1}\\
    &   S_{j, j+2} \mathfrak{C}_{(j-1, j+1) \circ (j,j+2) \circ\mu(\ell), \ell}
   =  \C_{\mu(\ell), \ell}\, S_{j, j+2}. \label{identity 2}
\end{align}
Namely, if these two identities are true, then an application of the  identity \eqref{identity 1} with $a=t_{\sigma^{-1}(j-2)}, \, b= t_{\sigma^{-1}(j+2)}, \, c= t_{\sigma^{-1}(j)}, \, \alpha = \mu(j+2), \, \beta = \mu(j)$ one would get
\begin{align*}
\I_{n,k}& (\mu', \sigma') f^{(k+2n)}
      = \int_{t_k \geq t_{k+2} \geq t_{k+4} \geq \dots \geq t_{k+2n}\geq 0} 
  		 T_k^{- t_{\sigma^{-1}(k+2)}} \C_{\mu(k+2),k+2} \, T_{k+2}^{t_{\sigma^{-1}(k+2)}- t_{\sigma^{-1}(k+4)}} \C_{\mu(k+4),k+4} \nonumber  \\
     & \qquad \qquad \dots T_{j-2}^{t_{\sigma^{-1}(j-2)} - t_{\sigma^{-1}(j)}}
    \C_{ \mu(j) ,j } 
    \,\, T_{j}^{t_{\sigma^{-1}(j)}- t_{\sigma^{-1}(j+2)}} 
    \C_{ \mu(j+2),j+2}  \nonumber 
    \,\, T_{j+2}^{t_{\sigma^{-1}(j+2)} -t_{\sigma^{-1}(j+4)}} {\color{blue} S_{j,j+2}}
  \dots \nonumber \\
  & \qquad \qquad \cdots\,  T^{t_{\sigma^{-1}(k+2n-2)} - t_{\sigma^{-1}(k+2n)}}_{k+2n-2} \C_{(j-1, j+1) \circ (j,j+2)\circ\mu(k+2n), k+2n}
  \, f^{(k+2n)}(t_{\sigma^{-1}(k+2n)}) dt_{k+2n}  \dots dt_{k+2}.
\end{align*}
Then by using the identity \eqref{identity 2} iteratively, the fact that $S_{j,j+2}$ commutes with translation operators and  that $f^{k+2n}$ is a symmetric function, one would be able to conclude that $\I_{n,k} (\mu', \sigma') = \I_{n,k} (\mu, \sigma).$ Thus it remains to prove identities \eqref{identity 1} and \eqref{identity 2}, which will be done the next two lemmata.
\end{proof}

We first prove identity \eqref{identity 2}. 
\begin{lemma}\label{identity 2 lemma}
For $\ell > j+2$  and for any $\lambda \in \{L_0, L_1, L_2, L_3\}$, we have
\begin{align}\label{identity 2 claim}
S_{j, j+2} \C^\lambda_{(j-1, j+1) \circ (j,j+2) \circ\mu(\ell), \ell}
   =  \C^\lambda_{\mu(\ell), \ell}\, S_{j, j+2}.
\end{align}
\end{lemma}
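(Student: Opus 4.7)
The plan is to prove the identity by direct evaluation at an arbitrary point $(X_k, V_k)$ with $k := \ell - 2$, checking that both sides produce the same integrand against the same delta measures when applied to $f^{(\ell)}$. The structural fact driving the argument is that $S_{j, j+2}$ implements the involution $\tau := (j-1, j+1) \circ (j, j+2)$ on indexed pairs of arguments, that is, $[S_{j,j+2} g^{(r)}](y_1, \ldots, y_r) = g^{(r)}(y_{\tau(1)}, \ldots, y_{\tau(r)})$ for every $r \geq j+2$. Under the hypothesis $\ell > j+2$ -- which in the board game context yields $k \geq j+2$ since $\ell$ and $j$ have the same parity -- the support $\{j-1, j, j+1, j+2\}$ of $\tau$ sits strictly inside $\{1, \ldots, k\}$, so $\tau$ never reaches the auxiliary indices $k+1, k+2$ at which $\C^\lambda_{\cdot, \ell}$ duplicates $x_m$ and inserts the fresh integration velocities.

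I would split into two cases depending on whether $m := \mu(\ell)$ lies in the support of $\tau$. If $m \notin \{j-1, j, j+1, j+2\}$, then $\tau(m) = m$ and the claim reduces to the commutation $S_{j,j+2} \C^\lambda_{m, \ell} = \C^\lambda_{m, \ell} S_{j,j+2}$; this is immediate from \eqref{CM}--\eqref{CL3}, because $\C^\lambda_{m, \ell}$ only touches the indices $\{m, k+1, k+2\}$, none of which lie in $\mathrm{supp}(\tau)$, so the permutation commutes through cleanly. If $m \in \{j-1, j, j+1, j+2\}$ instead, I would expand both sides explicitly. The LHS equals $[\C^\lambda_{\tau(m), \ell} f^{(\ell)}](\tau \cdot X_k, \tau \cdot V_k)$; its delta measures involve $(\tau \cdot V_k)_{\tau(m)} = v_{\tau(\tau(m))} = v_m$, so they coincide with the $\delta(\Sigma_{m,\ell})\delta(\Omega_{m,\ell})$ used on the RHS, and the duplicated position on the LHS is $(\tau \cdot X_k)_{\tau(m)} = x_m$. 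For the velocity tuple fed into $f^{(\ell)}$, a slot-by-slot comparison using $\tau^2 = \mathrm{id}$ shows that the ``replacement by $v_{k+1}$ at the special index'' sits at slot $\tau(m)$ on both sides, and all other slots match via the outer permutation $\tau$.

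The four choices $\lambda \in \{L_0, L_1, L_2, L_3\}$ differ only in the recipe for distributing the fresh velocities $v_{k+1}, v_{k+2}, v_{k+3}$ among the three slots $\{m, k+1, k+2\}$; since none of these slots interact with $\mathrm{supp}(\tau)$ once the Case 2 relabeling is performed, the argument is uniform in $\lambda$. I anticipate no conceptual obstacle; the only real difficulty is notational bookkeeping, namely tracking the permutation conventions and keeping the hierarchy indices $\{1, \ldots, k\}$ disjoint from the auxiliary indices $\{k+1, k+2\}$. A clean write-up would treat Case 1 in a single line by invoking the disjointness of supports, and carry out Case 2 explicitly for $\lambda = L_0$, remarking that the proofs for $L_1, L_2, L_3$ are identical up to the trivial change in how $v_{k+1}, v_{k+2}, v_{k+3}$ are placed.
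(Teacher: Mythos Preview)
Your proposal is correct and follows essentially the same approach as the paper: both prove the identity by direct evaluation, splitting according to whether $\mu(\ell)$ lies in $\{j-1,j,j+1,j+2\}$ and checking that the delta measures, the duplicated spatial variable, and the permuted velocity tuples agree. The paper carries out the computation for $\lambda=L_0$ and $\lambda=L_1$ explicitly and subdivides your Case~2 into the two subcases $\mu(\ell)\in\{j,j+2\}$ and $\mu(\ell)\in\{j-1,j+1\}$, whereas your use of the involution $\tau=(j-1,j+1)\circ(j,j+2)$ and the identity $(\tau\cdot V_k)_{\tau(m)}=v_m$ unifies these into a single computation; this is a cosmetic streamlining rather than a genuinely different method.
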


\begin{proof}[Proof of Lemma \ref{identity 2 lemma}] We will present the proof for $\lambda=L_0$ and $\lambda = L_1$. The cases $\lambda = L_2$ and $\lambda = L_3$ are done analogously to $\lambda = L_1$. Throughout the proof, we will use notation \eqref{Sigma and Omega}:
\begin{align}
  &\Sigma_{p, q}  = v_p + v_{q-1} - v_{q} - v_{q+1},\\
  &\Omega_{p, q} = |v_p|^2 + |v_{q-1}|^2 - |v_{q}|^2 - |v_{q+2}|^2.
\end{align}

\noindent $\bullet$ We first prove  \eqref{identity 2 claim} with $\lambda =L_0$ by considering three cases.

\noindent {\it Case 1:} assume that $\ell > j+2$ and  $\mu(\ell) \not \in \{j-1, j, j+1, j+2\}$.  Then  $(j-1, j+1) \circ (j,j+2) \circ \mu(\ell) = \mu(\ell)$, and so for any non-negative measurable function $f^{(\ell)}$  we have 
\begin{align*}
[&S_{j, j+2}  \C^{L_0}_{{\color{teal}(j-1, j+1) \circ (j,j+2) \circ\mu(\ell)}, \ell} f^{(\ell)}](t, X_{\ell-2}, V_{\ell-2})
= [S_{j, j+2}  \C^{L_0}_{{\color{teal}\mu(\ell)}, \ell} f^{(\ell)}](t, X_{\ell-2}, V_{\ell-2})\\
& = [ \C^{L_0}_{\mu(\ell), \ell} f^{(\ell)}](x_1, \dots,  {\color{blue} x_{j+1}, x_{j+2}}, {\color{red} x_{j-1} ,x_j }, \dots, x_{\ell-2};\,\, v_1, \dots,{\color{blue} v_{j+1},  v_{j+2}},{\color{red} v_{j-1}, v_j },  \dots, v_{\ell-2})\\
&=\int_{\R^{3d}} dv_{\ell-1} dv_{\ell} dv_{\ell+1} \,\delta(\Sigma_{\mu(\ell), \ell})\, \delta(\Omega_{\mu(\ell), \ell}) \nonumber \\
& \hspace{0.7cm} \left(f^{(\ell)}(x_1, \dots,  {\color{blue} x_{j+1},  x_{j+2}}, {\color{red}x_{j-1} ,x_j }, \dots\dots\dots, x_{\ell-2}, x_{\mu(\ell)},x_{\mu(\ell)}; \right. \\
& \hspace{1.8cm} v_1, \dots,{\color{blue} v_{j+1}, v_{j+2}},{\color{red} v_{j-1}, v_j },  \dots, \underbrace{v_{\ell-1}}_{\mu(\ell)-\text{th}}, \dots, v_{\ell-2},v_{\ell},v_{\ell+1} )\\
&=\int_{\R^{3d}} dv_{\ell-1} dv_{\ell} dv_{\ell+1} \,\delta(\Sigma_{\mu(\ell), \ell})\, \delta(\Omega_{\mu(\ell), \ell}) 
[S_{j,j+2}f^{(\ell)}](X_{\ell-2}, x_{\mu(\ell)},x_{\mu(\ell)}; \,\,V_{\ell-2}^{\mu(\ell),v_{\ell-1}},v_{\ell},v_{\ell+1} )\\
& = [\C^{L_0}_{\mu(\ell),\ell} S_{j,j+2} f^{(\ell)}](t,X_{\ell-2}, V_{\ell-2}).
\end{align*}

\noindent {\it Case 2:} assume that $\ell > j+2$ and  $\mu(\ell) \in \{j, j+2\}$. Without loss of generality, $\mu(\ell) = j$.  Then  $(j-1, j+1) \circ (j,j+2) \circ \mu(\ell) = j+2$, and so for any non-negative measurable $f^{(\ell)}$  
we have 
\begin{align*}
[&S_{j, j+2}  \C^{L_0}_{{\color{teal}(j-1, j+1) \circ (j,j+2) \circ\mu(\ell)}, \ell} f^{(\ell)}](t, X_{\ell-2}, V_{\ell-2})
= [S_{j, j+2}  \C^{L_0}_{{\color{teal}j+2}, \ell} f^{(\ell)}](t, X_{\ell-2}, V_{\ell-2})\\
& = [ \C^{L_0}_{j+2, \ell} f^{(\ell)}](x_1, \dots,  {\color{blue} x_{j+1}, x_{j+2}}, {\color{red} x_{j-1}, \underbrace{x_j}_{(j+2)\text{nd} }}, \dots, x_{\ell-2};\,\, v_1, \dots,{\color{blue} v_{j+1}, v_{j+2}}, {\color{red}  v_{j-1}, \underbrace{v_j}_{(j+2)\text{nd} }},  \dots, v_{\ell-2})\\
&=\int_{\R^{3d}} dv_{\ell-1} dv_{\ell} dv_{\ell+1} \,\delta(\Sigma_{j, \ell})\, \delta(\Omega_{j, \ell}) \nonumber \\
& \hspace{0.7cm} \left(f^{(\ell)}(x_1, \dots,  {\color{blue} x_{j+1}, x_{j+2}}, {\color{red}  x_{j-1} ,x_j }, \dots\dots, x_{\ell-2}, x_{j},x_{j}; \right. \\
& \hspace{1.8cm} v_1, \dots,{\color{blue} v_{j+1}, v_{j+2}},{\color{red} v_{j-1}}, {\color{teal} \underbrace{v_{\ell-1}}_{(j+2)\text{nd} }},  \dots, v_{\ell-2},v_{\ell},v_{\ell+1} )\\
& =\int_{\R^{3d}} dv_{\ell-1} dv_{\ell} dv_{\ell+1} \,\delta(\Sigma_{j, \ell})\, \delta(\Omega_{j, \ell}) 
[S_{j,j+2} f^{(\ell)}](X_{\ell-2}, x_{j},x_{j};  \,\, V_{\ell-2}^{j, v_{\ell-1}},v_{\ell},v_{\ell+1} )\\
& = [\C^{L_0}_{j, \ell} S_{j,j+2} f^{(\ell)}](t, X_{\ell-2}, V_{\ell-2})
= [\C^{L_0}_{\mu(\ell), \ell} S_{j,j+2} f^{(\ell)}](t, X_{\ell-2}, V_{\ell-2}).
\end{align*}

\noindent {\it Case 3:} assume that $\ell > j+2$ and  $\mu(\ell) \in \{j-1, j+1\}$. Without loss of generality,  $\mu(\ell) = j-1$.  Then  $(j-1, j+1)\circ (j,j+2) \circ \mu(\ell) = j+1$, and so for any non-negative measurable $f^{(\ell)}$  we have 

\begin{align*}
[&S_{j, j+2}  \C^{L_0}_{{\color{teal}(j-1, j+1)\circ (j,j+2) \circ\mu(\ell)}, \ell} f^{(\ell)}](t, X_{\ell-2}, V_{\ell-2})
= [S_{j, j+2}  \C^{L_0}_{{\color{teal}j+1}, \ell} f^{(\ell)}](t, X_{\ell-2}, V_{\ell-2})\\
& = [ \C^{L_0}_{j+1, \ell} f^{(\ell)}](x_1, \dots,  {\color{blue} x_{j+1}, x_{j+2}}, {\color{red} \underbrace{x_{j-1}}_{(j+1)\text{st}} ,x_j }, \dots, x_{\ell-2};\,\, v_1, \dots, {\color{blue} v_{j+1},  v_{j+2}}, {\color{red} \underbrace{v_{j-1}}_{(j+1)\text{st}}, v_j} ,  \dots, v_{\ell-2})\\
&=\int_{\R^{3d}} dv_{\ell-1} dv_{\ell} dv_{\ell+1} \,\delta(\Sigma_{j-1, \ell})\, \delta(\Omega_{j-1, \ell}) \nonumber \\
& \hspace{0.7cm} \left(f^{(\ell)}(x_1, \dots,  {\color{blue} x_{j+1}, x_{j+2}}, {\color{red}  x_{j-1} ,x_j }, \dots, x_{\ell-2}, x_{j-1},x_{j-1}; \right. \\
& \hspace{1.8cm} v_1, \dots,{\color{blue} v_{j+1}, v_{j+2}},{\color{teal} \underbrace{v_{\ell-1}}_{(j+1)\text{st}}}, {\color{red} v_{j}},  \dots, v_{\ell-2},v_{\ell},v_{\ell+1} )\\
& =\int_{\R^{3d}} dv_{\ell-1} dv_{\ell} dv_{\ell+1} \,\delta(\Sigma_{j-1, \ell})\, \delta(\Omega_{j-1, \ell}) 
[S_{j,j+2} f^{(\ell)}](X_{\ell-2}, x_{j-1},x_{j-1};  \,\, V_{\ell-2}^{j-1, v_{\ell-1}},v_{\ell},v_{\ell+1} )\\
& = [\C^{L_0}_{j-1, \ell} S_{j,j+2} f^{(\ell)}](t, X_{\ell-2}, V_{\ell-2})
= [\C^{L_0}_{\mu(\ell), \ell} S_{j,j+2} f^{(\ell)}](t, X_{\ell-2}, V_{\ell-2}).
\end{align*}

\noindent $\bullet$ We next prove \eqref{identity 2} with $\lambda = L_1$ by again considering three cases.

\noindent {\it Case 1:} assume that $\ell > j+2$ and  $\mu(\ell) \not \in \{j-1, j, j+1, j+2\}$.  Then,  $(j-1, j+1)\circ (j,j+2) \circ \mu(\ell) = \mu(\ell)$,  and so for any non-negative measurable function $f^{(\ell)}$  we have 
\begin{align*}
[&S_{j, j+2}  \C^{L_1}_{{\color{teal}(j-1,j+1)\circ (j,j+2) \circ\mu(\ell)}, \ell} f^{(\ell)}](t, X_{\ell-2}, V_{\ell-2})
= [S_{j, j+2}  \C^{L_1}_{{\color{teal}\mu(\ell)}, \ell} f^{(\ell)}](t, X_{\ell-2}, V_{\ell-2})\\
& = [ \C^{L_1}_{\mu(\ell), \ell} f^{(\ell)}](x_1, \dots,  {\color{blue} x_{j+1}, x_{j+2}}, {\color{red} x_{j-1} ,x_j }, \dots, x_{\ell-2};\,\, v_1, \dots,{\color{blue} v_{j+1}, v_{j+2}}, {\color{red}  v_{j-1}, v_j },  \dots, v_{\ell-2})\\
&=\int_{\R^{3d}} dv_{\ell-1} dv_{\ell} dv_{\ell+1} \,\delta(\Sigma_{\mu(\ell), \ell})\, \delta(\Omega_{\mu(\ell), \ell}) 
f^{(\ell)}(x_1, \dots,  {\color{blue} x_{j+1}, x_{j+2}}, {\color{red} x_{j-1} ,x_j }, \dots, x_{\ell-2}, x_{\mu(\ell)},x_{\mu(\ell)};  \\
& \hspace{7cm} v_1, \dots,{\color{blue} v_{j+1}, v_{j+2}}, {\color{red} v_{j-1}, v_j },   \dots, v_{\ell-2},v_{\ell},v_{\ell+1} )  \nonumber \\
& = \int_{\R^{3d}} dv_{\ell-1} dv_{\ell} dv_{\ell+1} \,\delta(\Sigma_{\mu(\ell), \ell})\, \delta(\Omega_{\mu(\ell), \ell})  [S_{j,j+2} f^{(\ell)}](X_{\ell-2}, x_{\mu(\ell)},x_{\mu(\ell)}; \,\,V_{\ell-2},v_{\ell},v_{\ell+1} )\\
& = [\C^{L_1}_{\mu(\ell),\ell} S_{j,j+2} f^{(\ell)}](t,X_{\ell-2}, V_{\ell-2}).
\end{align*}

\noindent {\it Case 2:} assume that $\ell > j+2$ and  $\mu(\ell) \in \{j, j+2\}$.  Without loss of generality, assume that $\mu(\ell) = j$. Then,  $(j-1, j+1)\circ (j,j+2) \circ \mu(\ell) = j+2$,  and so for any non-negative measurable function $f^{(\ell)}$  we have 
\begin{align*}
[&S_{j, j+2}  \C^{L_1}_{{\color{teal}(j-1, j+1) \circ (j,j+2) \circ\mu(\ell)}, \ell} f^{(\ell)}](t, X_{\ell-2}, V_{\ell-2})
= [S_{j, j+2}  \C^{L_1}_{{\color{teal}j+2}, \ell} f^{(\ell)}](t, X_{\ell-2}, V_{\ell-2})\\
& = [ \C^{L_1}_{j+2, \ell} f^{(\ell)}](x_1, \dots,  {\color{blue} x_{j+1}, x_{j+2}}, {\color{red} x_{j-1} ,\underbrace{x_j}_{(j+2)\text{nd}} }, \dots, x_{\ell-2};\,\, v_1, \dots,{\color{blue} v_{j+1}, v_{j+2}}, {\color{red} v_{j-1}, \underbrace{v_j}_{(j+2)\text{nd}} },  \dots, v_{\ell-2})\\
&=\int_{\R^{3d}} dv_{\ell-1} dv_{\ell} dv_{\ell+1} \,\delta(\Sigma_{j, \ell})\, \delta(\Omega_{j, \ell}) 
 f^{(\ell)}(x_1, \dots,  {\color{blue} x_{j+1}, x_{j+2}}, {\color{red} x_{j-1} ,x_j }, \dots, x_{\ell-2}, x_{j},x_{j};  \\
& \hspace{6.3cm} v_1, \dots,{\color{blue} v_{j+1}, v_{j+2}}, {\color{red} v_{j-1}, v_j },   \dots, v_{\ell-2},v_{\ell},v_{\ell+1} )  \nonumber \\
& =\int_{\R^{3d}} dv_{\ell-1} dv_{\ell} dv_{\ell+1} \,\delta(\Sigma_{j, \ell})\, \delta(\Omega_{j, \ell})[S_{j,j+2} f^{(\ell)}](X_{\ell-2}, x_{j},x_{j}; \,\,V_{\ell-2},v_{\ell},v_{\ell+1} ) \\
& = [\C^{L_1}_{j, \ell} S_{j,j+2} f^{(\ell)}](t, X_{\ell-2}, V_{\ell-2})
= [\C^{L_1}_{\mu(\ell), \ell} S_{j,j+2} f^{(\ell)}](t, X_{\ell-2}, V_{\ell-2}).
\end{align*}

\noindent {\it Case 3:} assume that $\ell > j+2$ and  $\mu(\ell) \in \{j-1, j+1\}$.  Without loss of generality, assume that $\mu(\ell) = j-1$. Then,  $(j-1, j+1)\circ (j,j+2) \circ \mu(\ell) = j+1$,  and so for any non-negative measurable function $f^{(\ell)}$  we have 
\begin{align*}
[&S_{j, j+2}  \C^{L_1}_{{\color{teal}(j-1, j+1) \circ (j,j+2) \circ\mu(\ell)}, \ell} f^{(\ell)}](t, X_{\ell-2}, V_{\ell-2})
= [S_{j, j+2}  \C^{L_1}_{{\color{teal}j+1}, \ell} f^{(\ell)}](t, X_{\ell-2}, V_{\ell-2})\\
& = [ \C^{L_1}_{j+1, \ell} f^{(\ell)}](x_1, \dots,  {\color{blue} x_{j+1}, x_{j+2}}, {\color{red} \underbrace{x_{j-1}}_{(j+1)\text{st}} ,x_j }, \dots, x_{\ell-2};\,\, v_1, \dots,{\color{blue} v_{j+1}, v_{j+2}}, {\color{red} \underbrace{v_{j-1}}_{(j+1)\text{st}}, v_j },  \dots, v_{\ell-2})\\
&=\int_{\R^{3d}} dv_{\ell-1} dv_{\ell} dv_{\ell+1} \,\delta(\Sigma_{j-1, \ell})\, \delta(\Omega_{j-1, \ell}) 
 f^{(\ell)}(x_1, \dots,  {\color{blue} x_{j+1}, x_{j+2}}, {\color{red} x_{j-1} ,x_j }, \dots, x_{\ell-2}, x_{j-1},x_{j-1};  \\
& \hspace{7cm} v_1, \dots,{\color{blue} v_{j+1}, v_{j+2}}, {\color{red} v_{j-1}, v_j },   \dots, v_{\ell-2},v_{\ell},v_{\ell+1} )  \nonumber \\
& =\int_{\R^{3d}} dv_{\ell-1} dv_{\ell} dv_{\ell+1} \,\delta(\Sigma_{j-1, \ell})\, \delta(\Omega_{j-1, \ell})[S_{j,j+2} f^{(\ell)}](X_{\ell-2}, x_{j-1},x_{j-1}; \,\,V_{\ell-2},v_{\ell},v_{\ell+1} ) \\
& = [\C^{L_1}_{j-1, \ell} S_{j,j+2} f^{(\ell)}](t, X_{\ell-2}, V_{\ell-2})
= [\C^{L_1}_{\mu(\ell), \ell} S_{j,j+2} f^{(\ell)}](t, X_{\ell-2}, V_{\ell-2}).
\end{align*}
\end{proof}

Next we prove identity \eqref{identity 1}. 
\begin{lemma}\label{identity 1 lemma}
     For any  $a,b,c,d \ge 0$, any $\beta <\alpha<j$ and any $k,\ell \in \{0, 1, 2, 3\}$, we have
\begin{align}
&T_{j-2}^{a - b} \C_{\alpha,j}^{{\color{red} L_k }} T_{j}^{b-c} \C_{\beta,j+2}^{{\color{red} L_{\ell}} }  T_{j+2}^{ c- d} 
= T_{j-2}^{a - c} \C_{\beta, j}^{{\color{red}L_{\ell}}} T_j^{c-b} \C_{\alpha, j+2}^{{\color{red}L_k}} T_{j+2}^{ b -d} S_{j,j+2}
\end{align}
\end{lemma}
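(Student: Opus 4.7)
My plan is to prove the identity by direct verification: evaluate both sides explicitly on a test function $f^{(j+2)}$ at an arbitrary point $(X_{j-2}, V_{j-2})$ and check that the resulting iterated integrals coincide. The conceptual reason the identity holds is that since $\alpha, \beta \le j-2$, the two collision operators act on disjoint ``bubbles'': $\C^{L_\ell}_{\beta,j+2}$ collapses positions $j+1, j+2$ onto $x_\beta$ and introduces three integration velocities $w_1, w_2, w_3$ constrained by a delta factor depending only on $v_\beta$, while $\C^{L_k}_{\alpha,j}$ collapses positions $j-1, j$ onto $x_\alpha$ and introduces $u_1, u_2, u_3$ constrained by a delta factor depending only on $v_\alpha$. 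Since $\alpha \neq \beta$ and neither operator modifies the other's pivot velocity, the two can be essentially commuted up to the swap $S_{j,j+2}$ of the two bubbles.

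I would first carry out the case $L_k = L_\ell = L_0$ in detail. Setting
\[
g_1 = T_{j+2}^{c-d}f^{(j+2)}, \quad g_2 = \C^{L_0}_{\beta,j+2}g_1, \quad g_3 = T_j^{b-c}g_2, \quad g_4 = \C^{L_0}_{\alpha,j}g_3, \quad g_5 = T_{j-2}^{a-b}g_4,
\]
and unfolding, the LHS evaluated at $(X_{j-2}, V_{j-2})$ becomes a sextuple integral over $(u_1, u_2, u_3, w_1, w_2, w_3)$ of $f^{(j+2)}$ against the product of delta factors $\delta(v_\alpha + u_1 - u_2 - u_3)\delta(|v_\alpha|^2 + |u_1|^2 - |u_2|^2 - |u_3|^2)\delta(v_\beta + w_1 - w_2 - w_3)\delta(|v_\beta|^2 + |w_1|^2 - |w_2|^2 - |w_3|^2)$. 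Each $x$-argument of $f^{(j+2)}$ carries an explicit shift obtained by summing the transport times against the velocity occupying that slot. The telescoping identities $(a-b)+(b-c)+(c-d) = (a-c)+(c-b)+(b-d) = a-d$ make each of these shifts match positionwise with what the RHS produces, once one accounts for the exchange of the four slots $\{j-1, j\} \leftrightarrow \{j+1, j+2\}$ enforced by $S_{j,j+2}$. Concretely, untouched positions $i \le j-2$ with $i \neq \alpha, \beta$ acquire shift $-(a-d)v_i$ on both sides; the $\alpha$ pivot gets $-(a-b)v_\alpha - (b-d)u_1$; the $\beta$ pivot gets $-(a-c)v_\beta - (c-d)w_1$; and the four collapsed slots carry the analogous pattern, with the remaining $u_i$'s and $w_i$'s distributed between the two sides exactly according to $S_{j,j+2}$.

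The extension to the other fifteen combinations $(L_k, L_\ell) \in \{L_0, L_1, L_2, L_3\}^2$ is essentially cosmetic: the four operators $L_0, L_1, L_2, L_3$ share identical delta factors and differ only in which of the integration velocities $v_{k+1}, v_{k+2}, v_{k+3}$ are substituted into which argument slots of the integrand. The verification performed for $L_0 \times L_0$ therefore adapts verbatim to each case; one only needs to update the labels specifying which $u_i$ appears at position $\alpha$ versus positions $j-1, j$ (and analogously for $w_i$), while the delta factors, the time telescoping, and the pairing enforced by $S_{j,j+2}$ remain the same. The main obstacle is purely notational: carefully tracking seven classes of positions (generic $i \le j-2$, $\alpha$, $\beta$, $j-1$, $j$, $j+1$, $j+2$) together with two triples of integration variables across the five transport–collision compositions. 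I expect no conceptual difficulty beyond this bookkeeping once the telescoping identity and the pairing enforced by $S_{j,j+2}$ are used correctly.
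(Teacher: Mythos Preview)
Your approach is correct and essentially the same as the paper's: both expand the LHS and RHS on a test function, track the resulting sextuple integrals against the two delta factors, and verify the arguments match after the $S_{j,j+2}$ swap, treating a few $(L_k,L_\ell)$ combinations explicitly and noting the rest are analogous. The only difference is that the paper first composes with $T_{j-2}^{c-a}$ on the left and $T_{j+2}^{d-b}$ on the right to reduce the four time parameters $a,b,c,d$ to the single parameter $\tau=b-c$, which trims the bookkeeping you anticipate; your direct telescoping argument $(a-b)+(b-c)+(c-d)=a-d$ accomplishes the same thing without the reduction.
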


\begin{proof}[Proof of Lemma \ref{identity 1 lemma}]
 By applying the operator $T_{j-2}^{c-a}$ from the left and the operator $T_{j+2}^{d-b}$ from the right, the identity can be written in its equivalent form
\begin{align}
 T_{j-2}^{c - b} \C^{L_k}_{\alpha,j} T_{j}^{b-c} \C^{L_\ell}_{\beta,j+2} T_{j+2}^{ c- b} 
 = \C^{L_\ell}_{\beta, j} T_j^{c-b} \C^{L_k}_{\alpha, j+2}  S_{j,j+2}.
\end{align}
If we introduce notation
$$
\tau = b-c,
$$
it suffices to show that for any $\beta <\alpha<j$ and any $k,\ell \in \{0, 1, 2, 3\}$, we have
\begin{align}
& T_{j-2}^{-\tau} \C_{\alpha,j}^{{\color{red} L_k}} T_{j}^{\tau} \C_{\beta,j+2}^{{\color{red} L_\ell}} T_{j+2}^{ -\tau}  = \C_{\beta, j}^{{\color{red} L_\ell}} T_j^{-\tau} \C_{\alpha, j+2}^{{\color{red} L_k}}  S_{j,j+2}.
\label{identity 1  equiv}
\end{align}
The strategy of the proof is to expand the left-hand side (LHS) and the right-hand side (RHS) of these identities separately and then compare their formulas.

Note that the operator $\C_{\alpha,j}$ comes with three integrating variables $v_{j-1}, v_j$ and $v_{j+1}$, while the corresponding variables for $\C_{\beta,j+2}$ are $v_{j+1}, v_{j+2}$ and $v_{j+3}$. In order to avoid confusion due to the repeated letter $v_{j+1}$, we will add stars to the variables corresponding to $\C_{\alpha,j}$, primes for $\C_{\beta,j+2}$, sharps for $\C_{\beta,j}$
and tildes for $\C_{\alpha,j+2}$. This notation will be used only within this lemma.

\noindent $\bullet$ We first prove identity \eqref{identity 1 equiv} with $k=\ell=0$. We start by computing its left-hand side.
\begin{align*}
&\text{LHS}_{00}
= [ T_{j-2}^{-\tau} \C^{L_0}_{\alpha,j} T_{j}^{\tau} \C^{L_0}_{\beta,j+2} T_{j+2}^{ -\tau} f^{(j+2)}](t,X_{j-2}, V_{j-2}) \\
& = [ \C^{L_0}_{\alpha,j} T_{j}^{\tau} \C^{L_0}_{\beta,j+2} T_{j+2}^{ -\tau} f^{(j+2)}](t,X_{j-2} +\tau V_{j-2}, V_{j-2})\\
&= \int_{\R^{3d}} dv^*_{j-1} dv^*_{j} dv^*_{j+1} \,\delta(\Sigma^*_{\alpha, j})\, \delta(\Omega^*_{\alpha, j}) \\
& \hspace{1cm}[T_{j}^{\tau} \C^{L_0}_{\beta,j+2} T_{j+2}^{ -\tau} f^{(j+2)}](t,X_{j-2}+\tau V_{j-2}, x_\alpha +\tau v_\alpha, x_\alpha +\tau v_\alpha; \, V_{j-2}^{\alpha,v^*_{j-1}}, v^*_j, v^*_{j+1}) \\
& = \int_{\R^{3d}} dv^*_{j-1} dv^*_{j} dv^*_{j+1} \,\delta(\Sigma^*_{\alpha, j})\, \delta(\Omega^*_{\alpha, j}) \nonumber \\
& \hspace{0.5cm} [ \C^{L_0}_{\beta,j+2} T_{j+2}^{ -\tau} f^{(j+2)}](t,X_{j-2}+\tau (V_{j-2} -  V_{j-2}^{\alpha,v^*_{j-1}}), x_\alpha+\tau (v_\alpha - v^*_j), x_\alpha + \tau (v_\alpha - v^*_{j+1}); \,\,   V_{j-2}^{\alpha,v^*_{j-1}}, v^*_j, v^*_{j+1})\\
& = \int_{\R^{6d}}  dv^*_{j-1} dv^*_{j} dv^*_{j+1}  dv'_{j+1} dv'_{j+2} dv'_{j+3}  
\,\delta(\Sigma^*_{\alpha, j})\, \delta(\Omega^*_{\alpha, j})
\,\delta(\Sigma'_{\beta, j+2})\, \delta(\Omega'_{\beta, j+2}) \nonumber \\
& \hspace{1cm} [T_{j+2}^{ -\tau} f^{(j+2)}]
(t,X_{j-2}+\tau (V_{j-2} -  V_{j-2}^{\alpha,v^*_{j-1}}),  x_\alpha+\tau (v_\alpha - v^*_j),  x_\alpha+\tau (v_\alpha - v^*_{j+1}), x_\beta, x_\beta; \\
& \hspace{8cm}\,\, V_{j-2}^{\beta, v'_{j+1}; \alpha,v^*_{j-1}}, v^*_j, v^*_{j+1}, v'_{j+2}, v'_{j+3})\\
& = \int_{\R^{6d}}  dv^*_{j-1} dv^*_{j} dv^*_{j+1}  dv'_{j+1} dv'_{j+2} dv'_{j+3}  
\,\delta(\Sigma^*_{\alpha, j})\, \delta(\Omega^*_{\alpha, j})
\,\delta(\Sigma'_{\beta, j+2})\, \delta(\Omega'_{\beta, j+2}) \nonumber \\
& \hspace{1cm}  f^{(j+2)}
(t,X_{j-2}+\tau V_{j-2}^{\beta, v'_{j+1}}, x_\alpha +\tau v_\alpha, x_\alpha +\tau v_\alpha, x_\beta+\tau v'_{j+2}, x_\beta+\tau v'_{j+3}; \\
& \hspace{8cm} V_{j-2}^{\beta, v'_{j+1}; \alpha,v^*_{j-1}}, v^*_j, v^*_{j+1}, v'_{j+2}, v'_{j+3}),
\end{align*}
where in the last equality we used that
$V_{j-2} - V_{j-2}^{\alpha, v^*_{j-1}} + V_{j-2}^{\beta, v'_{j+1}; \alpha,v^*_{j-1}} =  V_{j-2}^{\beta, v'_{j+1}}.$

 We next calculate the right-hand side of \eqref{identity 1 equiv} with $k=\ell=0$.
\begin{align*}
&\text{RHS}_{00} = [\C^{L_0}_{\beta, j} T_j^{-\tau} \C^{L_0}_{\alpha, j+2}  S_{j,j+2} f^{(j+2)}](t,X_{j-2}, V_{j-2}) \\
& =  \int_{\R^{3d}} dv^\#_{j-1} dv^\#_{j} dv^\#_{j+1} \,\delta(\Sigma^\#_{\beta, j})\, \delta(\Omega^\#_{\beta, j}) [T_j^{-\tau} \C^{L_0}_{\alpha, j+2}  S_{j,j+2} f^{(j+2)}](t,X_{j-2}, x_\beta, x_\beta; \, V_{j-2}^{\beta,v^\#_{j-1}}, v^\#_j, v^\#_{j+1})\\
& = \int_{\R^{3d}} dv^\#_{j-1} dv^\#_{j} dv^\#_{j+1} \,\delta(\Sigma^\#_{\beta, j})\, \delta(\Omega^\#_{\beta, j}) \\
& \hspace{1cm}[\C^{L_0}_{\alpha, j+2}  S_{j,j+2} f^{(j+2)}](t,X_{j-2}+\tau V_{j-2}^{\beta,v^\#_{j-1}}, x_\beta+\tau v^\#_{j}, x_\beta+\tau v^\#_{j+1}; \, V_{j-2}^{\beta,v^\#_{j-1}}, v^\#_j, v^\#_{j+1})\\
& = \int_{\R^{6d}} dv^\#_{j-1} dv^\#_{j} dv^\#_{j+1} d\tild{v}_{j+1} d\tild{v}_{j+2} d\tild{v}_{j+3} \,\delta(\Sigma^\#_{\beta, j})\, \delta(\Omega^\#_{\beta, j}) \delta(\tild{\Sigma}_{\alpha, j+2})\, \delta(\tild{\Omega}_{\alpha, j+2}) \\
&\hspace{1cm}  [S_{j,j+2} f^{(j+2)}](t,X_{j-2}+\tau V_{j-2}^{\beta,v^\#_{j-1}}, x_\beta+\tau v^\#_{j}, x_\beta+\tau v^\#_{j+1}, x_\alpha+\tau v_\alpha, x_\alpha + \tau v_\alpha;\\
& \hspace{7cm} \, V_{j-2}^{\beta,v^\#_{j-1}; \, \alpha, \tild{v}_{j+1}}, v^\#_j, v^\#_{j+1}, \tild{v}_{j+2}, \tild{v}_{j+3})\\
& = \int_{\R^{6d}} dv^\#_{j-1} dv^\#_{j} dv^\#_{j+1} d\tild{v}_{j+1} d\tild{v}_{j+2} d\tild{v}_{j+3} \,\delta(\Sigma^\#_{\beta, j})\, \delta(\Omega^\#_{\beta, j}) \delta(\tild{\Sigma}_{\alpha, j+2})\, \delta(\tild{\Omega}_{\alpha, j+2}) \\
&\hspace{1cm}   f^{(j+2)}(t,X_{j-2}+\tau V_{j-2}^{\beta,v^\#_{j-1}},  x_\alpha +\tau v_\alpha, x_\alpha + \tau v_\alpha, x_\beta+\tau v^\#_{j}, x_\beta+\tau v^\#_{j+1};\\
& \hspace{8cm} V_{j-2}^{\beta,v^\#_{j-1}; \, \alpha, \tild{v}_{j+1}}, \tild{v}_{j+2}, \tild{v}_{j+3}, v^\#_j, v^\#_{j+1}).
\end{align*}
By applying the change of variables:
\begin{equation}\label{change}
\begin{aligned}
\begin{cases}
v_{j-1}^\# &\mapsto v_{j+1}'\\
v_{j}^\# &\mapsto v_{j+2}'\\
v_{j+1}^\# &\mapsto v_{j+3}'
\end{cases}
\qquad \text{and } \qquad
\begin{cases}
\tild{v}_{j+1} &\mapsto v_{j-1}^*\\
\tild{v}_{j+2} &\mapsto v_{j}^*\\
\tild{v}_{j+3} &\mapsto v_{j+1}^*
\end{cases}
\end{aligned}
\end{equation}
one can see that $\text{RHS}_{00}=\text{LHS}_{00}$, which completes the proof of \eqref{identity 1 equiv} with $k=\ell=0$.

\noindent $\bullet$ Next we prove identity \eqref{identity 1 equiv}  for $k=0$ and $\ell=1$. When $\ell=2$ or $\ell=3$, the proof can be done analogously.
We start by expanding the left-hand side:
\begin{align*}
&\text{LHS}_{01} = [T_{j-2}^{-\tau} \C^{L_0}_{\alpha,j} T_{j}^{\tau} \C^{L_1}_{\beta,j+2} T_{j+2}^{ -\tau} f^{(j+2)}](t, X_{j-2}, V_{j-2})\\
& = [ \C^{L_0}_{\alpha,j} T_{j}^{\tau} \C^{L_1}_{\beta,j+2} T_{j+2}^{ -\tau} f^{(j+2)}](t,X_{j-2} +\tau V_{j-2}, V_{j-2})\\
&= \int_{\R^{3d}} dv^*_{j-1} dv^*_{j} dv^*_{j+1} \,\delta(\Sigma^*_{\alpha, j})\, \delta(\Omega^*_{\alpha, j}) \\
& \hspace{1cm}[T_{j}^{\tau} \C^{L_1}_{\beta,j+2} T_{j+2}^{ -\tau} f^{(j+2)}](t,X_{j-2}+\tau V_{j-2}, x_\alpha +\tau v_\alpha, x_\alpha + \tau v_\alpha; \, V_{j-2}^{\alpha,v^*_{j-1}}, v^*_j, v^*_{j+1}) \\
& = \int_{\R^{3d}} dv^*_{j-1} dv^*_{j} dv^*_{j+1} \,\delta(\Sigma^*_{\alpha, j})\, \delta(\Omega^*_{\alpha, j}) \nonumber \\
& \hspace{0.5cm} [ \C^{L_1}_{\beta,j+2} T_{j+2}^{ -\tau} f^{(j+2)}](t,X_{j-2}+\tau (V_{j-2} -  V_{j-2}^{\alpha,v^*_{j-1}}), x_\alpha+\tau (v_\alpha-v^*_j), x_\alpha+ \tau (v_\alpha - v^*_{j+1}); \,\,   V_{j-2}^{\alpha,v^*_{j-1}}, v^*_j, v^*_{j+1})\\
& = \int_{\R^{6d}}  dv^*_{j-1} dv^*_{j} dv^*_{j+1}  dv'_{j+1} dv'_{j+2} dv'_{j+3}  
\,\delta(\Sigma^*_{\alpha, j})\, \delta(\Omega^*_{\alpha, j})
\,\delta(\Sigma'_{\beta, j+2})\, \delta(\Omega'_{\beta, j+2}) \nonumber \\
& \hspace{1cm} [T_{j+2}^{ -\tau} f^{(j+2)}]
(t,X_{j-2}+\tau (V_{j-2} -  V_{j-2}^{\alpha,v^*_{j-1}}), x_\alpha+ \tau (v_\alpha - v^*_{j}), x_\alpha+ \tau (v_\alpha - v^*_{j+1}), x_\beta, x_\beta; \\
& \hspace{9.5cm}\,\, V_{j-2}^{ \alpha,v^*_{j-1}}, v^*_j, v^*_{j+1}, v'_{j+2}, v'_{j+3})\\
& = \int_{\R^{6d}}  dv^*_{j-1} dv^*_{j} dv^*_{j+1}  dv'_{j+1} dv'_{j+2} dv'_{j+3}  
\,\delta(\Sigma^*_{\alpha, j})\, \delta(\Omega^*_{\alpha, j})
\,\delta(\Sigma'_{\beta, j+2})\, \delta(\Omega'_{\beta, j+2}) \nonumber \\
& \hspace{.5cm} f^{(j+2)}
(t,X_{j-2}+\tau V_{j-2}, x_\alpha+ \tau v_\alpha, x_\alpha+ \tau v_\alpha, x_\beta + \tau v'_{j+2}, x_\beta +\tau v'_{j+3};  V_{j-2}^{ \alpha,v^*_{j-1}}, v^*_j, v^*_{j+1}, v'_{j+2}, v'_{j+3}).
\end{align*}

On the other hand, the right-hand side of \eqref{identity 1 equiv}  for $k=0$ and $\ell=1$ can be expanded as follows
\begin{align*}
&\text{RHS}_{01}
= [\C^{L_1}_{\beta, j} T_j^{-\tau} \C^{L_0}_{\alpha, j+2}  S_{j,j+2} f^{(j+2)}](t,X_{j-2}, V_{j-2}) \\
& =  \int_{\R^{3d}} dv^\#_{j-1} dv^\#_{j} dv^\#_{j+1} \,\delta(\Sigma^\#_{\beta, j})\, \delta(\Omega^\#_{\beta, j}) [T_j^{-\tau} \C^{L_0}_{\alpha, j+2}  S_{j,j+2} f^{(j+2)}](t,X_{j-2}, x_\beta, x_\beta; \, V_{j-2}, v^\#_j, v^\#_{j+1})\\
& =  \int_{\R^{3d}} dv^\#_{j-1} dv^\#_{j} dv^\#_{j+1} \,\delta(\Sigma^\#_{\beta, j})\, \delta(\Omega^\#_{\beta, j}) [\C^{L_0}_{\alpha, j+2}  S_{j,j+2} f^{(j+2)}](t,X_{j-2}+\tau V_{j-2}, x_\beta +\tau v^\#_j, x_\beta + \tau v^\#_{j+1}; \, V_{j-2}, v^\#_j, v^\#_{j+1})\\
& = \int_{\R^{6d}} dv^\#_{j-1} dv^\#_{j} dv^\#_{j+1} d\tild{v}_{j+1} d\tild{v}_{j+2} d\tild{v}_{j+3} \,\delta(\Sigma^\#_{\beta, j})\, \delta(\Omega^\#_{\beta, j}) \delta(\tild{\Sigma}_{\alpha, j+2})\, \delta(\tild{\Omega}_{\alpha, j+2}) \\
&\hspace{1cm}  [S_{j,j+2} f^{(j+2)}](t,X_{j-2}+\tau V_{j-2}, x_\beta+\tau v^\#_{j}, x_\beta+\tau v^\#_{j+1}, x_\alpha+\tau v_\alpha, x_\alpha + \tau v_\alpha; V_{j-2}^{\alpha, \tild{v}_{j+1}}, v^\#_j, v^\#_{j+1}, \tild{v}_{j+2}, \tild{v}_{j+3})\\
& = \int_{\R^{6d}} dv^\#_{j-1} dv^\#_{j} dv^\#_{j+1} d\tild{v}_{j+1} d\tild{v}_{j+2} d\tild{v}_{j+3} \,\delta(\Sigma^\#_{\beta, j})\, \delta(\Omega^\#_{\beta, j}) \delta(\tild{\Sigma}_{\alpha, j+2})\, \delta(\tild{\Omega}_{\alpha, j+2}) \\
&\hspace{1cm}   f^{(j+2)}(t,X_{j-2}+\tau V_{j-2}, x_\alpha+\tau v_\alpha, x_\alpha + \tau v_\alpha, x_\beta+\tau v^\#_{j}, x_\beta+\tau v^\#_{j+1}; V_{j-2}^{\alpha, \tild{v}_{j+1}}, \tild{v}_{j+2}, \tild{v}_{j+3}, v^\#_j, v^\#_{j+1}).
\end{align*}
Under the same change of variables as in \eqref{change}, we see that $\text{RHS}_{01} = \text{LHS}_{01}$, which completes the proof of the identity \eqref{identity 1 equiv}  for $k=0$ and $\ell=1$.

\noindent $\bullet$ Finally we prove identity \eqref{identity 1 equiv}  for $k=1$ and $\ell=2$. Any other combination of $k,\ell \in \{1,2,3\}$ can be proved analogously.
We start by expanding the left-hand side:
\begin{align*}
&\text{LHS}_{12} = [T_{j-2}^{-\tau} \C^{L_1}_{\alpha,j} T_{j}^{\tau} \C^{L_2}_{\beta,j+2} T_{j+2}^{ -\tau} f^{(j+2)}](t, X_{j-2}, V_{j-2})\\
& = [ \C^{L_1}_{\alpha,j} T_{j}^{\tau} \C^{L_2}_{\beta,j+2} T_{j+2}^{ -\tau} f^{(j+2)}](t,X_{j-2} +\tau V_{j-2}, V_{j-2})\\
&= \int_{\R^{3d}} dv^*_{j-1} dv^*_{j} dv^*_{j+1} \,\delta(\Sigma^*_{\alpha, j})\, \delta(\Omega^*_{\alpha, j}) \\
& \hspace{1cm}[T_{j}^{\tau} \C^{L_2}_{\beta,j+2} T_{j+2}^{ -\tau} f^{(j+2)}](t,X_{j-2}+\tau V_{j-2}, x_\alpha +\tau v_\alpha, x_\alpha + \tau v_\alpha; \, V_{j-2}, v^*_j, v^*_{j+1}) \\
& = \int_{\R^{3d}} dv^*_{j-1} dv^*_{j} dv^*_{j+1} \,\delta(\Sigma^*_{\alpha, j})\, \delta(\Omega^*_{\alpha, j}) \nonumber \\
& \hspace{0.5cm} [ \C^{L_2}_{\beta,j+2} T_{j+2}^{ -\tau} f^{(j+2)}](t,X_{j-2}, x_\alpha+\tau (v_\alpha-v^*_j), x_\alpha+ \tau (v_\alpha - v^*_{j+1}); \,\,   V_{j-2}, v^*_j, v^*_{j+1})\\
& = \int_{\R^{6d}}  dv^*_{j-1} dv^*_{j} dv^*_{j+1}  dv'_{j+1} dv'_{j+2} dv'_{j+3}  
\,\delta(\Sigma^*_{\alpha, j})\, \delta(\Omega^*_{\alpha, j})
\,\delta(\Sigma'_{\beta, j+2})\, \delta(\Omega'_{\beta, j+2}) \nonumber \\
& \hspace{1cm} [T_{j+2}^{ -\tau} f^{(j+2)}]
(t,X_{j-2}, x_\alpha+ \tau (v_\alpha - v^*_{j}), x_\alpha+ \tau (v_\alpha - v^*_{j+1}), x_\beta, x_\beta;  V_{j-2}, v^*_j, v^*_{j+1}, v'_{j+1}, v'_{j+3})\\
& = \int_{\R^{6d}}  dv^*_{j-1} dv^*_{j} dv^*_{j+1}  dv'_{j+1} dv'_{j+2} dv'_{j+3}  
\,\delta(\Sigma^*_{\alpha, j})\, \delta(\Omega^*_{\alpha, j})
\,\delta(\Sigma'_{\beta, j+2})\, \delta(\Omega'_{\beta, j+2}) \nonumber \\
& \hspace{.3cm}  f^{(j+2)}
(t,X_{j-2} +\tau V_{j-2}, x_\alpha+ \tau v_\alpha, x_\alpha+ \tau v_\alpha, x_\beta + \tau v'_{j+1}, x_\beta + \tau v'_{j+3};  V_{j-2}, v^*_j, v^*_{j+1}, v'_{j+1}, v'_{j+3}).
\end{align*}

On the other hand, the right-hand side of \eqref{identity 1 equiv}  for $k=1$ and $\ell=2$ can be expanded as follows
\begin{align*}
&\text{RHS}_{12}
= [\C^{L_2}_{\beta, j} T_j^{-\tau} \C^{L_1}_{\alpha, j+2}  S_{j,j+2} f^{(j+2)}](t,X_{j-2}, V_{j-2}) \\
& =  \int_{\R^{3d}} dv^\#_{j-1} dv^\#_{j} dv^\#_{j+1} \,\delta(\Sigma^\#_{\beta, j})\, \delta(\Omega^\#_{\beta, j}) [T_j^{-\tau} \C^{L_1}_{\alpha, j+2}  S_{j,j+2} f^{(j+2)}](t,X_{j-2}, x_\beta, x_\beta; \, V_{j-2}, v^\#_{j-1}, v^\#_{j+1})\\
& =  \int_{\R^{3d}} dv^\#_{j-1} dv^\#_{j} dv^\#_{j+1} \,\delta(\Sigma^\#_{\beta, j})\, \delta(\Omega^\#_{\beta, j}) [\C^{L_1}_{\alpha, j+2}  S_{j,j+2} f^{(j+2)}]\\
&\hspace{6cm}(t,X_{j-2}+\tau V_{j-2}, x_\beta +\tau v^\#_{j-1}, x_\beta + \tau v^\#_{j+1};\, V_{j-2}, v^\#_{j-1}, v^\#_{j+1})\\
& = \int_{\R^{6d}} dv^\#_{j-1} dv^\#_{j} dv^\#_{j+1} d\tild{v}_{j+1} d\tild{v}_{j+2} d\tild{v}_{j+3} \,\delta(\Sigma^\#_{\beta, j})\, \delta(\Omega^\#_{\beta, j}) \delta(\tild{\Sigma}_{\alpha, j+2})\, \delta(\tild{\Omega}_{\alpha, j+2}) [S_{j,j+2} f^{(j+2)}] \\
&\hspace{1cm}  (t,X_{j-2}+\tau V_{j-2}, x_\beta+\tau v^\#_{j-1}, x_\beta+\tau v^\#_{j+1}, x_\alpha+\tau v_\alpha, x_\alpha + \tau v_\alpha; V_{j-2}, v^\#_{j-1}, v^\#_{j+1}, \tild{v}_{j+2}, \tild{v}_{j+3})\\
& = \int_{\R^{6d}} dv^\#_{j-1} dv^\#_{j} dv^\#_{j+1} d\tild{v}_{j+1} d\tild{v}_{j+2} d\tild{v}_{j+3} \,\delta(\Sigma^\#_{\beta, j})\, \delta(\Omega^\#_{\beta, j}) \delta(\tild{\Sigma}_{\alpha, j+2})\, \delta(\tild{\Omega}_{\alpha, j+2}) \\
&\hspace{.3cm}   f^{(j+2)}(t,X_{j-2}+\tau V_{j-2}, x_\alpha+\tau v_\alpha, x_\alpha + \tau v_\alpha, x_\beta+\tau v^\#_{j-1}, x_\beta+\tau v^\#_{j+1}; V_{j-2}, \tild{v}_{j+2}, \tild{v}_{j+3}, v^\#_{j-1}, v^\#_{j+1}).
\end{align*}
Under the same change of variables as in \eqref{change}, we see that $\text{RHS}_{12} = \text{LHS}_{12}$, which completes the proof of the identity \eqref{identity 1 equiv}  for $k=1$ and $\ell=2$, and thus also the proof of this lemma.
\end{proof}

Inspired by \cite{klma08} and \cite{chpa11}, we define  a special upper echelon form as the state which does not admit any further acceptable moves.
\begin{definition}[Special Upper Echelon Form]
Let $k,n \in \N$ with $n\ge 2$, and let $M_{n,k}$  be defined as in \eqref{set M}. We say that $\mu \in M_{n,k}$ is in special upper echelon form if for every $j\in \{k+2, k+4,  \dots, k+2n\}$ we have $\mu(j) \leq \mu(j+2)$. We will denote by
\begin{align}\label{set of mu_s}
    \mathcal{M}_{n,k} ~ \text{the set of all special upper echelon forms in } M_{n,k}.
\end{align}
\end{definition}

In the same way as in \cite[Lemma 3.2]{klma08}, one can show that every state on the board can be transformed by finitely many acceptable moves into a special upper echelon form.
\begin{proposition}\label{prop-finitely many moves}
Let $k,n \in \N$ with $n\ge 2$, and let $M_{n,k}$  be defined as in \eqref{set M}. Any $\mu \in M_{n,k}$ can be changed to a special upper echelon form via a finite sequence of acceptable moves.
\end{proposition}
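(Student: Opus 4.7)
The plan is to exhibit a non-negative integer-valued potential function $\Phi$ on $M_{n,k}$ that strictly decreases under every acceptable move. Since any strictly decreasing sequence of non-negative integers terminates in finitely many steps, this immediately yields the conclusion: the terminal $\mu^*$ admits no further acceptable move, and hence by definition is in special upper echelon form.

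Concretely, I would define
\begin{equation*}
\Phi(\mu) := \sum_{j \,\in\, \{k+2, k+4, \dots, k+2n\}} 2^{k+2n-j}\, \mu(j),
\end{equation*}
so that smaller indices $j$ carry exponentially larger weight. Suppose an acceptable move at some index $j$ produces $\mu'$. I split the change $\Phi(\mu') - \Phi(\mu)$ into a main contribution from positions $j$ and $j+2$ and a side contribution from positions $\ell \ne j, j+2$. A crucial preliminary observation is that for any admissible $\ell < j$ in the domain, the admissibility constraint $\mu(\ell) < \ell-1 \le j-3$ together with the descent hypothesis $\mu(j+2)<\mu(j)<j-1$ forces $\mu(\ell) \notin \{j-1, j, j+1, j+2\}$, so the permutation $(j-1,j+1)\circ(j,j+2)$ fixes $\mu(\ell)$ and such $\ell$ contribute nothing. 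Using the formulas \eqref{mu' calculation}, a short calculation with the geometric weights $w_j = 2^{k+2n-j}$ gives
\begin{equation*}
\Delta_{\text{main}} \,=\, (w_{j+2} - w_j)\,(\mu(j)-\mu(j+2)) \,\le\, -3\cdot 2^{k+2n-j-2},
\end{equation*}
since $\mu(j)-\mu(j+2)\ge 1$, while for each $\ell \ge j+4$ the permutation changes $\mu(\ell)$ by at most $2$ in absolute value, whence
\begin{equation*}
|\Delta_{\text{side}}| \,\le\, 2\sum_{\ell \ge j+4}\, 2^{k+2n-\ell} \,=\, \tfrac{2}{3}\bigl(2^{k+2n-j-2}-1\bigr).
\end{equation*}
Adding the two estimates yields $\Phi(\mu')-\Phi(\mu) \le -\tfrac{7}{3}\cdot 2^{k+2n-j-2} - \tfrac{2}{3} < 0$, so $\Phi$ strictly decreases at every move.

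The main obstacle is tuning the weights so that the main decrease strictly dominates the possibly opposing side-effect changes at positions $\ell \ge j+4$; the base $2$ works because these positions carry weights exponentially smaller than $2^{k+2n-j-2}$, and their geometric total is only a $\tfrac{1}{3}$-fraction of the gap $w_j - w_{j+2}$. Once the strict decrease of $\Phi$ is in hand, termination in finitely many moves, and hence the proposition, follow at once.
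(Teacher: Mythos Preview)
Your proof is correct. The paper itself does not give a proof but simply refers to \cite[Lemma~3.2]{klma08}; the Klainerman--Machedon argument (and its quintic adaptation in \cite{chpa11}) is algorithmic in nature: one constructively brings the board to echelon form column by column, essentially a selection-sort procedure, and verifies that each step is an acceptable move.

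Your approach is genuinely different. By exhibiting a strictly decreasing non-negative integer Lyapunov function $\Phi$, you prove the stronger statement that \emph{every} maximal sequence of acceptable moves terminates, not merely that some finite sequence reaches an echelon form. The key technical point you handle well is that an acceptable move here, unlike a pure transposition, also perturbs the values $\mu(\ell)$ for $\ell\ge j+4$ via the permutation $(j-1,j+1)\circ(j,j+2)$; your geometric weights $2^{k+2n-j}$ are chosen so that the gain $w_j-w_{j+2}=3\cdot 2^{k+2n-j-2}$ from the main swap strictly dominates the total side perturbation $\tfrac{2}{3}(2^{k+2n-j-2}-1)$ coming from those later positions. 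Your preliminary observation that positions $\ell<j$ are untouched (since $\mu(\ell)\le j-4$) is exactly what is needed to restrict attention to $\ell\ge j+4$. One minor remark: the descent hypothesis $\mu(j+2)<\mu(j)$ is not actually needed for that preliminary observation---the admissibility constraint alone suffices---but this does not affect the argument.
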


Next we provide an upper bound on the number of special upper echelon forms. The proof of this proposition can be done exactly in the same way as in \cite{chpa11}, since their board is of the same size as ours $(k+2n-2)\times n$.
\begin{proposition}[Number of special upper echelon forms; \protect{\cite[Lemma 7.3]{chpa11}}] 
\label{prop-equivalent classes}
Let $k,n \in \N$ with $n\ge 2$, and let $\mathcal{M}_{n,k}$  be the set of all special upper echelon forms defined  in \eqref{set of mu_s}. Then the following upper bound holds: 
\begin{align} \label{number of echelon forms}
    \# \mathcal{M}_{n,k} \leq 2^{k+3n-2}. 
\end{align}
\end{proposition}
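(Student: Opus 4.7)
The plan is to reformulate the counting of special upper echelon forms as counting non-decreasing integer sequences subject to upper-envelope constraints, then bound this by a binomial coefficient, and finally apply the trivial bound $\binom{N}{r}\le 2^N$. Recalling that $\mu\in\mathcal{M}_{n,k}$ is determined by the values $a_\ell := \mu(k+2\ell)$ for $\ell\in\{1,\dots,n\}$, the two defining constraints ($\mu(j)<j-1$ together with the special upper echelon condition $\mu(j)\le\mu(j+2)$) become
\[
1\le a_1\le a_2\le \cdots\le a_n,\qquad a_\ell\le k+2\ell-2\ \ \text{for every }\ell.
\]
Thus I would first observe that $\#\mathcal{M}_{n,k}$ equals the cardinality of this set of admissible sequences.

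Next, I would drop the per-coordinate ceilings and retain only the global ceiling $a_\ell\le k+2n-2$, since this can only enlarge the counting set. The number of non-decreasing sequences $1\le a_1\le\cdots\le a_n\le N$ with $N=k+2n-2$ is a classical stars-and-bars quantity: via the standard bijection $b_\ell = a_\ell+\ell-1$ one obtains strictly increasing sequences in $\{1,\dots,N+n-1\}=\{1,\dots,k+3n-3\}$ of length $n$, hence there are $\binom{k+3n-3}{n}$ of them. Consequently
\[
\#\mathcal{M}_{n,k}\ \le\ \binom{k+3n-3}{n}\ \le\ 2^{k+3n-3}\ \le\ 2^{k+3n-2},
\]
which is the desired estimate.

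An equivalent geometric way to see the same bound, which I would include as a more transparent explanation, is to encode each admissible sequence $(a_1,\dots,a_n)$ as a monotone lattice path using $a_n\le k+2n-2$ horizontal steps and $n$ vertical steps (one vertical step between consecutive $a_\ell$'s, preceded by $a_\ell-a_{\ell-1}\ge 0$ horizontal steps). Padding with extra horizontal steps to reach total length exactly $k+3n-2$, each sequence is injectively encoded by a binary word of length $k+3n-2$, giving the bound $2^{k+3n-2}$ directly.

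The whole argument is combinatorial and mirrors \cite[Lemma~7.3]{chpa11}; since the board dimensions $(k+2n-2)\times n$ and the structural conditions defining $\mathcal{M}_{n,k}$ coincide exactly with those in \cite{chpa11}, no modification is required. I do not foresee any real obstacle: the only subtlety is the step where the per-coordinate ceilings $a_\ell\le k+2\ell-2$ are relaxed to a single global ceiling, which is legitimate because we seek only an upper bound and the relaxation enlarges the counted family.
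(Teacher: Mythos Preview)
Your argument is correct: the reformulation of $\mathcal{M}_{n,k}$ as non-decreasing sequences $1\le a_1\le\cdots\le a_n$ with $a_\ell\le k+2\ell-2$, the relaxation to the single global ceiling $a_\ell\le k+2n-2$, and the stars-and-bars count $\binom{k+3n-3}{n}\le 2^{k+3n-2}$ are all valid. The paper itself does not give a proof here but simply refers to \cite[Lemma~7.3]{chpa11}, noting that the board dimensions coincide; what you have written is precisely the standard argument from that reference, so your approach matches the paper's (outsourced) proof.
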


Finally, one can show that the sum over all states that can be turned into the same special upper echelon form can be reorganized as follows. The proof of this proposition is identical to that of Theorem 7.4 in \cite{chpa11}.
\begin{proposition}[Sum over one equivalence class; \protect{\cite[Theorem 7.4]{chpa11}}]\label{sum within one equivalence class}
Let $\mu_u \in \mathcal{M}_{n,k}$ be a special upper echelon form, and write $\mu \sim \mu_u$ if $\mu$ can be reduced to $\mu_u$ in finitely many acceptable moves. Then there exists a set $D \subset [0,t]^{n}$, that depends on $\mu_u$, such that 
\begin{equation}
    \sum_{\mu \sim \mu_u} \int_{{t} \geq t_{k+2} \ge \dots \geq t_{k+2n} \geq 0} J(\underline{t}_{n,k} ; \mu) \, dt_{k+2n} \dots  d t_{k+2} 
        = \int_{D} J(\underline{t}_{n,k}; \mu_u) \, dt_{k+2n} \dots d t_{k+2},
\end{equation}
where the sum goes over all $\mu \in M_{n,k}$ such that $\mu$ can be changed to $\mu_u$ via a finite sequence of acceptable moves.
\end{proposition}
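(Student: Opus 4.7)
The plan is to adapt the classical Klainerman–Machedon–Chen–Pavlović board game reduction to the present setting, leveraging the Acceptable Move Invariance that has just been proved via Lemmas \ref{identity 1 lemma} and \ref{identity 2 lemma}. Starting from an arbitrary $\mu$ in the equivalence class of $\mu_u$, Proposition \ref{prop-finitely many moves} provides a finite sequence of acceptable moves transforming the initial state $(\mu, \mathrm{id})$ into $(\mu_u, \sigma_\mu)$ for some $\sigma_\mu \in S_{n,k}$. Since each acceptable move preserves the value of $\I_{n,k}$, I would first deduce that
\begin{equation*}
\int_{t \ge t_{k+2} \ge \dots \ge t_{k+2n} \ge 0} J_{n,k}(\underline{t}_{k+2n}; \mu)\, dt_{k+2n}\cdots dt_{k+2}
= \int_{D_\mu} J_{n,k}(\underline{t}_{k+2n}; \mu_u)\, dt_{k+2n}\cdots dt_{k+2},
\end{equation*}
where $D_\mu \subset [0,t]^n$ is the reordered simplex
\begin{equation*}
D_\mu := \bigl\{ \underline{t}_{k+2n} \in [0,t]^n : t \ge t_{\sigma_\mu^{-1}(k+2)} \ge t_{\sigma_\mu^{-1}(k+4)} \ge \dots \ge t_{\sigma_\mu^{-1}(k+2n)} \ge 0 \bigr\}.
\end{equation*}

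Next I would verify that the assignment $\mu \mapsto \sigma_\mu$ is well-defined, i.e.\ independent of the particular sequence of acceptable moves chosen to reduce $\mu$ to $\mu_u$. This follows by showing that any two reduction sequences differ by moves that commute (at the level of the induced permutation), which is a direct consequence of the explicit formulas \eqref{mu' calculation}–\eqref{sigma' calculation}. Granting this, the crux of the argument is then the \emph{injectivity} of the map
\begin{equation*}
\{\mu \in M_{n,k} : \mu \sim \mu_u\} \;\longrightarrow\; S_{n,k}, \qquad \mu \mapsto \sigma_\mu,
\end{equation*}
so that distinct $\mu$'s in the equivalence class correspond to distinct permutations, and hence distinct reordered simplices $D_\mu$. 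Because any two such simplices arise from distinct permutations of the same coordinate ordering, they are pairwise disjoint up to a measure-zero boundary.

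Having established disjointness, I would conclude by setting $D := \bigcup_{\mu \sim \mu_u} D_\mu \subset [0,t]^n$, so that
\begin{equation*}
\sum_{\mu \sim \mu_u} \int_{t \ge t_{k+2} \ge \dots \ge t_{k+2n} \ge 0} J_{n,k}(\underline{t}_{k+2n}; \mu)\, dt_{k+2n}\cdots dt_{k+2}
= \int_D J_{n,k}(\underline{t}_{k+2n}; \mu_u)\, dt_{k+2n}\cdots dt_{k+2},
\end{equation*}
which is the desired identity. The main obstacle is the injectivity/disjointness step: one must track with care how each acceptable move $(\mu,\sigma) \mapsto (\mu',\sigma')$ in \eqref{acceptable move rule} simultaneously modifies the circle pattern and the time permutation, and argue that two distinct starting circle configurations cannot collapse to the same $(\mu_u, \sigma_\mu)$. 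However, since the board here has dimensions $(k+2n-2)\times n$—identical to the one in \cite{chpa11}—and since the acceptable move rule has the same combinatorial form, the injectivity argument from \cite[Theorem 7.4]{chpa11} transfers without modification once the operator-level invariances (Lemmas \ref{identity 1 lemma} and \ref{identity 2 lemma}) are in place. Thus the proof reduces to invoking the combinatorial lemma of \cite{chpa11} and observing that our hierarchy satisfies the analytic prerequisites needed to apply it.
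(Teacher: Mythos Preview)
Your proposal is correct and follows exactly the approach the paper takes: the paper does not give an independent proof but simply states that ``the proof of this proposition is identical to that of Theorem 7.4 in \cite{chpa11},'' relying on the acceptable move invariance established in Lemmas \ref{identity 1 lemma} and \ref{identity 2 lemma} together with the fact that the board dimensions and move rules match those of \cite{chpa11}. Your outline (invariance $\Rightarrow$ permuted simplices $D_\mu$, injectivity of $\mu\mapsto\sigma_\mu$, disjointness up to measure zero, $D=\bigcup_\mu D_\mu$) is precisely the content of that cited theorem.
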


\subsubsection{Combining a priori estimates and the board game argument.} 
In this section we combine the iterated a priori estimate from Proposition \ref{prop-iterated a priori} and the board game argument described in the previous subsection to prove uniqueness of solutions to the wave kinetic hierarchy as stated in Theorem \ref{thm-uniqueness}. Since wave kinetic hierarchy in linear, it suffices to show that if $F_0=0$ and $F$ is a mild solution, then $F=0$. Here, for $F = (f^{(k)})_{k=1}^\infty$, we say $F=0$ if for each $k\in \N$ we have $f^{(k)}=0$.

\begin{proof}[Proof of Theorem \ref{thm-uniqueness}]
Recall from \eqref{iterated Duhamel} that for zero initial data, a mild solution to the wave kinetic  hierarchy can be expressed as
\begin{align}
    T^{-t}_k f^{(k)}(t)
	 &= \int_0^{t}\int_{0}^{t_{k+2}} \cdots \int_0^{t_{k+2n-2}}  dt_{k+2n} \cdots dt_{k+4} dt_{k+2} \nonumber \\
     & \qquad T_k^{- t_{k+2}} \C^{k+2} T_{k+2}^{t_{k+2}- t_{k+4}} \C^{k+4} \cdots T^{t_{k+2n-2} - t_{k+2n}}_{k+2n-2} \C^{k+2n} f^{(k+2n)}(t_{k+2n})\\
     &=  \sum_{  \mu \in M_{n,k}}  \int_0^{t}\int_{0}^{t_{k+1}} \cdots \int_0^{t_{k+n-1}} J_{n,k}(\underline{t}_n; \mu) \,\,  dt_{k+n} \cdots  dt_{k+1},
\end{align}
where the sum is taken over all the mappings in $M_{n,k}$ given by \eqref{set M}, and where $J_{n,k}(\underline{t}_n; \mu)$ is defined as in \eqref{J mu}.
By Proposition \ref{prop-finitely many moves} and Proposition \ref{sum within one equivalence class}, we can instead sum over all upper echelon forms:
\begin{align}
    T^{-t}_k f^{(k)}(t)
     &=  \sum_{  \mu_u \in \mathcal{M}_{n,k}}  \int_{D(\mu_u)} J_{n,k}(\underline{t}_n; \mu_u) \,\,  dt_{k+n} \cdots  dt_{k+1}.
\end{align} 
Since each operator $\C_{\mu_u(k+2j), k+2j}$ appearing in $J_{n,k}(\underline{t}_n; \mu_u)$ is a difference of two operators defined in \eqref{gain-loss}$:  \C^{+}_{\mu_u(k+2j), k+2j}  - \C^{-}_{\mu_u(k+2j),  k+2j}$, if we use notation $\boldsymbol{\pi} = (\pi_{k+2}, \pi_{k+4}, \dots, \pi_{k+2n}) \in \{+,-\}^n$ and  $\sign(\boldsymbol{\pi}) = \pi_{k+2} \cdot \pi_{k+4} \cdot ... \cdot \pi_{k+2n}$, we can write
\begin{align*}
    T^{-t}_k f^{(k)}(t)
     &=  \sum_{  \mu_u \in \mathcal{M}_{n,k}} \sum_{\boldsymbol{\pi} \in \{ +,-\}^n } \sign(\boldsymbol{\pi})  \int_{D(\mu_u)}  T_k^{- t_{k+2}} \C^{\pi_{k+2}}_{\mu_u(k+2),k+2} T_{k+2}^{t_{k+2}- t_{k+4}} \C^{\pi_{k+4}}_{\mu_u(k+4), k+4} \cdots \nonumber\\
& \hspace{1cm} \cdots T^{t_{k+2n-2} - t_{k+2n}}_{k+2n-2} \C^{\pi_{k+2n}}_{\mu_u(k+2n),k+2n} f^{(k+2n)}(t_{k+2n}) ~ dt_{k+2n} \cdots dt_{k+4} dt_{k+2}.
\end{align*}
Since, 
 $|\C^\pm_{j,k} g^{(k)}| \le \C^\pm_{j,k} |g^{(k)}|$  and $|T_k^{\tau}g^{(k)}| = T_k^{\tau}|g^{(k)}|$, 
 we have
\begin{align*}
    \left|T^{-t}_k f^{(k)}(t)\right|
     &\le  \sum_{  \mu_u \in \mathcal{M}_{n,k}} \sum_{\boldsymbol{\pi} \in \{ +,-\}^n } \int_{D(\mu_u)}  T_k^{- t_{k+2}} \C^{\pi_{k+2}}_{\mu_u(k+2),k+2} T_{k+2}^{t_{k+2}- t_{k+4}} \C^{\pi_{k+4}}_{\mu_u(k+4), k+4} \cdots \nonumber\\
& \hspace{1cm} \cdots T^{t_{k+2n-2} - t_{k+2n}}_{k+2n-2} \C^{\pi_{k+2n}}_{\mu_u(k+2n),k+2n} \left|f^{(k+2n)}(t_{k+2n})\right| ~ dt_{k+2n} \cdots dt_{k+4} dt_{k+2}\\
& \le   \sum_{  \mu_u \in \mathcal{M}_{n,k}} \sum_{\boldsymbol{\pi} \in \{ +,-\}^n } \int_{[0,T]^n}  T_k^{- t_{k+2}} \C^{\pi_{k+2}}_{\mu_u(k+2),k+2} T_{k+2}^{t_{k+2}- t_{k+4}} \C^{\pi_{k+4}}_{\mu_u(k+4), k+4} \cdots \nonumber\\
& \hspace{1cm} \cdots T^{t_{k+2n-2} - t_{k+2n}}_{k+2n-2} \C^{\pi_{k+2n}}_{\mu_u(k+2n),k+2n} \left|f^{(k+2n)}(t_{k+2n})\right| ~ dt_{k+2n} \cdots dt_{k+4}dt_{k+2},
\end{align*}
where in the last inequality we enlarged the domain of the time integration thanks to the fact that the integrand is non-negative.  
Multiplying both sides of the above inequality with the polynomial weights $\l\l \alpha X_k\r\r^p\l\l \beta V_k\r\r^q$, taking the supremum in $X_k,V_k$, and using the triangle inequality on the norm $\|\cdot\|_{k,p,q,\alpha,\beta}$, we obtain
\begin{align*}
    &\left\|T^{-t}_k f^{(k)}(t)\right\|_{k,p,q,\alpha,\beta}
\le   \sum_{  \mu_u \in \mathcal{M}_{n,k}} \sum_{\boldsymbol{\pi} \in \{ +,-\}^n } \left\| \int_{[0,T]^n}  T_k^{- t_{k+2}} \C^{\pi_{k+2}}_{\mu_u(k+2),k+2} T_{k+2}^{t_{k+2}- t_{k+4}} \C^{\pi_{k+4}}_{\mu_u(k+4), k+4} \cdots \nonumber \right.\\
& \hspace{2cm} \left.\cdots T^{t_{k+2n-2} - t_{k+2n}}_{k+2n-2} \C^{\pi_{k+2n}}_{\mu_u(k+2n),k+2n} \left|f^{(k+2n)}(t_{k+2n})\right| ~ dt_{k+2n} \cdots dt_{k+4}dt_{k+2}\right\|_{k,p,q,\alpha,\beta}.
\end{align*}
By Proposition \ref{prop-iterated a priori}, combined with the fact that $\#\mathcal{M}_{n,k} \le 2^{k+3n-2}$ (Proposition \eqref{prop-equivalent classes}) and $\#\{+,-\}^n = 2^n$, we have
\begin{align*}
     \left\|T^{-t}_k f^{(k)}(t)\right\|_{k,p,q,\alpha,\beta}
     &\le 2^{k+5n-2}\,C^n_{p,q,\alpha,\beta} \,
    \Big|\Big|\Big| T_{k+2n}^{-(\cdot)} \left|f^{(k+2n)} (\cdot)\right| \Big|\Big|\Big|_{k+2n, p,q,\alpha,\beta, T}\\
     &= 2^{k+5n-2}\,C^n_{p,q,\alpha,\beta} \,
    \Big|\Big|\Big| T_{k+2n}^{-(\cdot)} f^{(k+2n)} (\cdot)\Big|\Big|\Big|_{k+2n, p,q,\alpha,\beta, T}.
\end{align*}
From the definition of the norm \eqref{X_infty time norm}, we can further deduce that
\begin{align*}
\left\|T^{-t}_k f^{(k)}(t)\right\|_{k,p,q,\alpha,\beta}
& \le 2^{k+5n-2}\,C^n_{p,q,\alpha,\beta} \,
    e^{-\mu(k+2n)}\Big|\Big|\Big| \mathcal{T}^{-(\cdot)} F (\cdot)\Big|\Big|\Big|_{ p,q,\alpha,\beta, T}\\
& = \frac{(2e^{-\mu})^k}{4}\left( 32 e^{-2\mu} C_{p,q,\alpha,\beta}\right)^n
\Big|\Big|\Big| \mathcal{T}^{-(\cdot)} F (\cdot) \Big|\Big|\Big|_{ p,q,\alpha,\beta, T}.
\end{align*}
Since $\mu$ was chosen so that $e^{2\mu} > 32 C_{p,q,\alpha,\beta}$, and since $\Big|\Big|\Big| \mathcal{T}^{-(\cdot)} F (\cdot) \Big|\Big|\Big|_{ p,q,\alpha,\beta, T}<\infty$, when we let $n\to \infty$, we get that $\left\|T^{-t}_k f^{(k)}(t)\right\|_{k,p,q,\alpha,\beta} = 0$.  Since $t\in[0,T]$ was arbitrary, we obtain $T_k^{-(\cdot)} f^{(k)}(\cdot)=0$. 
Hence $f^{(k)}=0$, and thus $F=0$.
\end{proof}

\subsection{Proof of Theorem \ref{thm-gwp for kwh}}

Thanks to the assumption $e^{2\mu}>32C_{p,q,\alpha,\beta}$ in Theorem \ref{thm-gwp for kwh}, the solution constructed in Theorem \ref{thm-existence for kwh} is unique due to Theorem \ref{thm-uniqueness}.

Using representation  \eqref{kwh solution}, Fubini's theorem, and the conservation laws  \eqref{conservation of mass KWE}-\eqref{conservation of energy KWE} at the level of the wave kinetic equation, one can obtain the conservation laws \eqref{conservation of mass: KWH}-\eqref{conservation of energy: KWH} for the wave kinetic hierarchy.

Also, we prove  the stability estimate  \eqref{stability estimate hierarchy KWH} under the  assumption that the initial datum $F_0 \in  \mathcal{A}\cap \mathcal{X}_{p,q,\alpha,\beta,\mu'}^\infty$ is tensorised, i.e. $F_0=(f_0^{\otimes k})_{k=1}^\infty$. For such data, by Remark \ref{ball}, we have  $\|f_0\|_{p,q,\alpha,\beta}\leq e^{-\mu'}$ and  $F=(f^{\otimes k})_{k=1}^\infty$ is the solution to the wave kinetic hierarchy \eqref{kwh}, where $f$ is the mild solution of the wave kinetic equation with initial data $f_0$, obtained by Theorem \ref{thm-kwe is well posed}. In particular, by \eqref{bound wrt initial data equation}, we have $\|T_1^{-t}f\|_{p,q,\alpha,\beta}\leq 2\|f_0\|_{p,q,\alpha,\beta}$. Therefore, using \eqref{tensorization of transport norm} and \eqref{norm of tensors}, we obtain
\begin{align*}
e^{\mu k}\|T_k^{-t}f^{\otimes k}(t)\|_{k,p,q,\alpha,\beta}
= e^{\mu k}\|T_1^{-t}f(t)\|_{p,q,\alpha,\beta}^k
& \leq 2^ke^{\mu k} \|f_0\|_{p,q,\alpha,\beta}^k\\
&\leq e^{\mu' k} \|f_0^{\otimes k}\|_{k,p,q,\alpha,\beta}
\leq \|F_0\|_{p,q,\alpha,\beta,\mu',T}.
\end{align*}
Taking supremum over time, bound \eqref{stability estimate hierarchy} follows.

\appendix
\section{}

\subsection{Properties of tensorized functions}
\label{sec-tensorized}

Here we state results from \cite{ammipata24} regarding the relationship between the norms used in this paper and tensorized products of a given function $h:\R^{2d}\to\R$ defined by
$$h^{\otimes k}(X_k,V_k)=\prod_{i=1}^k h(x_i,v_i),\quad k\in\N.$$

\begin{remark}\label{remark on tensorization}
We note that given $k\in\N$, $h^{\otimes k}\in X_{p,q,\alpha, \beta}^k$ if and only if $h\in X_{p,q,\alpha, \beta}$. In particular, there holds
\begin{equation}\label{norm of tensors}
        \|h^{\otimes k}\|_{k,p,q,\alpha, \beta}=\|h\|_{ p,q,\alpha, \beta}^k,\quad\forall k\in\N.
    \end{equation}

\end{remark}

\begin{remark}\label{remark on tensorization of the transport} We note that the transport operator tensorizes as well. Namely for given $h:\R^{2d}\to\R$, we have
\begin{equation}\label{tensorization of the transport}
T_k^{s}h^{\otimes k}=(T_1^s h)^{\otimes k},\quad\forall \,s\in\R,\,\,\,\forall\, k\in\N.    
\end{equation}
In particular, by \eqref{norm of tensors}, we have
\begin{equation}\label{tensorization of transport norm}
 \|T_k^{s}h^{\otimes k}\|_{k,p,q,\alpha, \beta}=\|T_1^{s}h\|_{p,q,\alpha, \beta}^k,\quad\forall \,s\in\R,\,\,\,\forall\, k\in\N.    
\end{equation}
\end{remark}

\subsection{Resonant manifold properties}
\label{sec-resonant manifold}

Here we gather several properties of resonant manifolds. 
\begin{lemma}\label{lemma on resonant manifold}
Suppose $w_0,w_1, w_2, w_3 \in \R^d$ lie on the resonant manifold determined by $\Sigma = \{w_0+w_1 = w_2+w_3\}$ and $\Omega = \left\{|w_0|^2 + |w_1|^2 = |w_2|^2 + |w_3|^2\right\}.$ For any $i,j \in \{0,1,2,3\}$, define
\begin{align}
W_{i,j} = w_i - w_j,
\end{align}
and for any $v\in\R^d$, let $\hat{v}$ denote the unit vector in the direction of $v$, that is
\begin{align}
    \hat{v} = \frac{v}{|v|}.
\end{align}
Then the following identities hold
\begin{align}
 & W_{0,2} \cdot W_{0,3} =0, \label{orthogonality}\\
 & |W_{0,1}| = |W_{2,3}|, \label{magnitudes}\\
 & |W_{0,2}|^2+|W_{0,3}|^2=|W_{0,1}|^2 \label{pythagoras}
 \end{align}
Additionally, the following estimate holds
    \begin{align}\label{min estimate}
    \min\{|W_{0,2}|,|W_{0,3}|\}\geq \frac{|W_{0,1}|}{2}\sqrt{1-(\widehat{W}_{0,1}\cdot\widehat{W}_{2,3})^2}.
    \end{align}
\end{lemma}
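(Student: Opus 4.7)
The plan is to extract one scalar consequence of the two conservation laws, namely $w_0\cdot w_1=w_2\cdot w_3$, and then derive all four statements by elementary manipulation. Indeed, from $\Sigma$ we have $|w_0+w_1|^2=|w_2+w_3|^2$, which upon expansion gives $|w_0|^2+|w_1|^2+2w_0\cdot w_1=|w_2|^2+|w_3|^2+2w_2\cdot w_3$; combined with $\Omega$ this yields $w_0\cdot w_1=w_2\cdot w_3$.

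Given this identity, I would first dispatch \eqref{magnitudes} by writing $|W_{0,1}|^2=|w_0|^2+|w_1|^2-2w_0\cdot w_1$ and $|W_{2,3}|^2=|w_2|^2+|w_3|^2-2w_2\cdot w_3$, which agree by $\Omega$ and the scalar identity above. Next, for \eqref{orthogonality} I would expand
\[
W_{0,2}\cdot W_{0,3}=(w_0-w_2)\cdot(w_0-w_3)=|w_0|^2-w_0\cdot(w_2+w_3)+w_2\cdot w_3,
\]
and substitute $w_2+w_3=w_0+w_1$ from $\Sigma$ to obtain $-w_0\cdot w_1+w_2\cdot w_3=0$. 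For \eqref{pythagoras} I would use the elementary computation $W_{0,2}+W_{0,3}=2w_0-(w_2+w_3)=w_0-w_1=W_{0,1}$, and then combine $|W_{0,1}|^2=|W_{0,2}+W_{0,3}|^2=|W_{0,2}|^2+|W_{0,3}|^2+2W_{0,2}\cdot W_{0,3}$ with the orthogonality just established.

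For the min estimate \eqref{min estimate} I would additionally observe that $W_{0,2}-W_{0,3}=w_3-w_2=-W_{2,3}$, so together with $W_{0,2}+W_{0,3}=W_{0,1}$ we can solve $W_{0,2}=\tfrac12(W_{0,1}-W_{2,3})$ and $W_{0,3}=\tfrac12(W_{0,1}+W_{2,3})$. Using $|W_{0,1}|=|W_{2,3}|$ from \eqref{magnitudes}, these give the clean expressions
\[
|W_{0,2}|^2=\tfrac{|W_{0,1}|^2}{2}\bigl(1-\widehat{W}_{0,1}\cdot\widehat{W}_{2,3}\bigr),\qquad |W_{0,3}|^2=\tfrac{|W_{0,1}|^2}{2}\bigl(1+\widehat{W}_{0,1}\cdot\widehat{W}_{2,3}\bigr),
\]
whose product is $|W_{0,2}|^2|W_{0,3}|^2=\tfrac{|W_{0,1}|^4}{4}\bigl(1-(\widehat{W}_{0,1}\cdot\widehat{W}_{2,3})^2\bigr)$. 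Since by \eqref{pythagoras} we have $\max\{|W_{0,2}|,|W_{0,3}|\}\leq\sqrt{|W_{0,2}|^2+|W_{0,3}|^2}=|W_{0,1}|$, I close with the inequality $\min\geq\frac{|W_{0,2}|\,|W_{0,3}|}{\max}\geq\frac{|W_{0,2}|\,|W_{0,3}|}{|W_{0,1}|}$, which is precisely the desired bound.

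There is no substantial obstacle here; the lemma is a purely algebraic/geometric consequence of the two resonance conditions. The only mildly nontrivial observation is the pair of identities $W_{0,2}\pm W_{0,3}\in\{W_{0,1},-W_{2,3}\}$, which together with orthogonality encode a parallelogram decomposition that makes the min estimate transparent.
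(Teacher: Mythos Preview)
Your proof is correct and follows essentially the same route as the paper: both derive the scalar identity $w_0\cdot w_1=w_2\cdot w_3$ first, obtain \eqref{orthogonality}--\eqref{pythagoras} by direct expansion, and for \eqref{min estimate} compute the product $|W_{0,2}|\,|W_{0,3}|=\tfrac{|W_{0,1}|^2}{2}\sqrt{1-(\widehat{W}_{0,1}\cdot\widehat{W}_{2,3})^2}$ and combine it with an elementary $\min\geq(\text{product})/\sqrt{|W_{0,2}|^2+|W_{0,3}|^2}$ bound. The only cosmetic difference is that the paper parametrizes $w_2,w_3$ via the midpoint and $\widehat{W}_{2,3}$, whereas you obtain the same expressions for $|W_{0,2}|,|W_{0,3}|$ through the sum/difference identities $W_{0,2}\pm W_{0,3}\in\{W_{0,1},-W_{2,3}\}$.
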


\begin{proof}
Suppose $w_0,w_1, w_2, w_3 \in \R^d$ belong to $\Sigma$ and $\Omega$, that is, they satisfy
\begin{align}
   & w_0+w_1 = w_2+w_3, \label{momentum}\\
   & |w_0|^2 + |w_1|^2 = |w_2|^2 + |w_3|^2.\label{energy}
\end{align}
By squaring \eqref{momentum} and subtracting from it \eqref{energy}, one has that
\begin{align} \label{mixed}
     w_0\cdot w_1 = w_2 \cdot w_3.
\end{align}
Next, by multiplying the identity \eqref{momentum} by $w_0$ and combining that with \eqref{mixed}, one has 
\begin{align*}
    |w_0|^2 + w_2 + w_3 = w_0 \cdot(w_2 + w_3),
\end{align*}
which is equivalent to  \eqref{orthogonality}.

Identity \eqref{magnitudes} easily follows from \eqref{energy} and \eqref{mixed}.

By combining the orthogonality property \eqref{orthogonality} and \eqref{magnitudes}, we have
\begin{equation}\label{orthogonality equality appendix}
|W_{0,2}|^2+|W_{0,3}|^2=|W_{2,3}|^2=|W_{0,1}|^2.    
\end{equation}
Finally, we prove \eqref{min estimate}.
Momentum equation \eqref{momentum} and  \eqref{magnitudes} imply that 
\begin{align}
    w_{2}&=\frac{w_0+w_1}{2}+\frac{|W_{0,1}|}{2}\widehat{W}_{2,3}\label{w_2},  \\
    w_{3}&=\frac{w_0 + w_{1}}{2}-\frac{|W_{0,1}|}{2}\widehat{W}_{2,3}\label{w_{3}}.
\end{align}
Therefore,
\begin{align*}
|W_{0,2}|&=\frac{|W_{0,1}|}{2}|\widehat{W}_{0,1} - \widehat{W}_{2,3}|,\qquad
|W_{0,3}|=\frac{|W_{0,1}|}{2}|\widehat{W}_{0,1}+\widehat{W}_{2,3}|.
\end{align*}
and thus 
\begin{align}
|W_{0,2}|\,|W_{0,3}|
& = \frac{|W_{0,1}|^2}{4} \sqrt{|\widehat{W}_{0,1} - \widehat{W}_{2,3}|^2 \,\,\,|\widehat{W}_{0,1} + \widehat{W}_{2,3}|^2} \nonumber\\
& = \frac{|W_{0,1}|^2}{4} 
\sqrt{\left(2-2\,\widehat{W}_{0,1} \cdot \widehat{W}_{2,3}\right) \left(2+2\,\widehat{W}_{0,1} \cdot \widehat{W}_{2,3}\right)} \nonumber \\
&=\frac{|W_{0,1}|^2}{2}\sqrt{1-(\widehat{W}_{0, 1}\cdot\widehat{W}_{2,3})^2}.
\label{Carleman formula appendix}
\end{align}
Then, the estimate \eqref{min estimate} follows from  the elementary inequality $\min\{|x|,|y|\}\geq\frac{|xy|}{\sqrt{x^2+y^2}}$ and identities \eqref{orthogonality equality appendix} and \eqref{Carleman formula appendix}.

\end{proof}

\subsection{Various integral estimates }
\label{sec-integral estimates}

In this section we gather several estimates that will be used throughout the paper. We begin by two lemmas  that will be used to estimate time integrals appearing in proofs a priori estimates in Section \ref{section - a priori estimates}.

\begin{lemma}[\protect{\cite[Lemma A.1]{ammipata24}}]\label{one bracket}
For $p>1$  and $x,\eta \in \R^d$  with $\eta \ne 0$, we have
\begin{align}
    \int_{-\infty}^\infty   \l x+s\eta\r^{-p} ~ds
    \le \frac{2p}{p-1} \frac{1}{|\eta|}.
\end{align}
\end{lemma}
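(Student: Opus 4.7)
The plan is to reduce the $d$-dimensional integral on the line $s \mapsto x + s\eta$ to a one-variable integral by choosing coordinates aligned with $\eta$. First I would perform the change of variable $u = |\eta|\,s + \hat{\eta}\cdot x$, which is affine with Jacobian $|\eta|$, so $ds = du/|\eta|$. Writing $x = x_\parallel + x_\perp$ with $x_\parallel = (\hat{\eta}\cdot x)\,\hat{\eta}$ and $x_\perp \perp \eta$, this substitution gives $|x + s\eta|^2 = |x_\perp|^2 + u^2$, hence
\begin{equation*}
\int_{-\infty}^{\infty} \langle x + s\eta\rangle^{-p}\,ds = \frac{1}{|\eta|}\int_{-\infty}^{\infty} \bigl(a^2 + u^2\bigr)^{-p/2}\,du,
\end{equation*}
where $a^2 := 1 + |x_\perp|^2 \ge 1$.

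Next I would rescale $u = a\,v$, which produces a factor $a^{1-p}$ in front of the $x$-independent integral $\int_{-\infty}^{\infty}(1+v^2)^{-p/2}\,dv$. Because $a \ge 1$ and $p>1$, the prefactor satisfies $a^{1-p} \le 1$, so the bound becomes independent of $x$.

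It remains to estimate $\int_{-\infty}^{\infty}(1+v^2)^{-p/2}\,dv$ by the explicit constant $\tfrac{2p}{p-1}$. The simplest way is the crude bound $(1+v^2)^{-p/2} \le \min\{1, |v|^{-p}\}$: splitting the integral at $|v|=1$ yields
\begin{equation*}
\int_{-\infty}^{\infty}(1+v^2)^{-p/2}\,dv \;\le\; \int_{-1}^{1} 1\,dv + 2\int_{1}^{\infty} v^{-p}\,dv \;=\; 2 + \frac{2}{p-1} \;=\; \frac{2p}{p-1},
\end{equation*}
which combined with the previous step gives exactly the desired inequality. There is no real obstacle here; the only thing to verify carefully is the affine change of variables and the fact that $a \ge 1$ (so that $a^{1-p} \le 1$ for $p>1$), since this is what allows the bound to be uniform in $x$.
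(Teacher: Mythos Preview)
Your proof is correct. The paper does not give its own proof of this lemma; it simply cites it as \cite[Lemma~A.1]{ammipata24}, so there is nothing to compare against here. Your argument---decomposing $x$ into components parallel and perpendicular to $\eta$, performing the affine change of variables to reduce to a one-dimensional integral, rescaling, and then splitting at $|v|=1$---is the standard and expected route, and every step checks out.
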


\begin{lemma}[\protect{\cite[Lemma 3.3]{ammipata24}}]\label{time integral}   
Let $p>1$ and  $x\in\R^d$. Consider $\xi,\eta\in\R^d$ with $\xi,\eta\neq 0 $ and $\xi\cdot \eta=0$. Then for any $t\ge 0$ there holds the bound
\begin{equation}\label{estimate on I}
    \int_0^t \l  x+s\xi\r^{-p}\l  x+s\eta\r^{-p}\,ds\leq \frac{4p}{p-1}\,\,\frac{\l  x\r^{-p}}{\min\{|\xi|,|\eta|\}}.
\end{equation} 
\end{lemma}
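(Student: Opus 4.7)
The plan is to exploit the hypothesis $\xi\cdot\eta = 0$ via an orthogonal decomposition of $x$ relative to the plane $\mathrm{span}\{\xi,\eta\}$, so as to obtain a pointwise lower bound of the form $\l x+s\xi\r \l x+s\eta\r \ge \l x\r\, f(s)$ for a concrete $f$ whose negative $p$-th power is integrable over $\R$ with norm of order $1/\min\{|\xi|,|\eta|\}$. The remaining one-dimensional $s$-integral will be handled by Lemma \ref{one bracket}.

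Concretely, set $a := |\xi|$, $b := |\eta|$, and decompose $x = y_1 \hat\xi + y_2 \hat\eta + x'$ with $x' \perp \xi, \eta$ and $y_1 := x\cdot\hat\xi$, $y_2 := x\cdot\hat\eta$. Using $\xi \perp \eta$, a direct expansion gives $\l x+s\xi\r^2 = K^2 + (y_1+sa)^2$ and $\l x+s\eta\r^2 = L^2 + (y_2+sb)^2$, where $K^2 := 1 + |x'|^2 + y_2^2 = \l x\r^2 - y_1^2$ and $L^2 := 1 + |x'|^2 + y_1^2 = \l x\r^2 - y_2^2$. Setting $R := 1 + |x'|^2 \ge 1$, a short algebraic manipulation yields the two key identities $K^2 L^2 = R\l x\r^2 + y_1^2 y_2^2 \ge \l x\r^2$ and $K^2 + L^2 = R + \l x\r^2 \ge \l x\r^2$. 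The second identity forces $\max(K^2, L^2) \ge \l x\r^2 / 2$, so by the symmetry of the target estimate under $(\xi,\eta)\leftrightarrow(\eta,\xi)$ I may assume without loss of generality that $L^2 \ge \l x\r^2 / 2$.

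Multiplying the two factorizations and keeping only the two nonnegative summands $K^2 L^2$ and $L^2 (y_1+sa)^2$ gives $\l x+s\xi\r^2 \l x+s\eta\r^2 \ge \l x\r^2 + \tfrac{1}{2}\l x\r^2 (y_1+sa)^2 = \l x\r^2\bigl(1 + \tfrac{1}{2}(y_1+sa)^2\bigr)$. Raising to the $-p/2$ power and extending the integration from $[0,t]$ to $\R$ (legitimate since the integrand is nonnegative), the substitution $u = (y_1+sa)/\sqrt{2}$ reduces the integral to $\frac{\sqrt{2}}{a}\l x\r^{-p}\int_\R \l u\r^{-p}\, du$. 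Lemma \ref{one bracket} applied in dimension one with $x=0$ and $\eta = 1$ gives $\int_\R \l u\r^{-p}\, du \le 2p/(p-1)$; combined with $2\sqrt{2} < 4$ and $1/a \le 1/\min\{|\xi|,|\eta|\}$, this produces exactly the claimed estimate.

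The main obstacle is the algebraic inequality $K^2 L^2 \ge \l x\r^2$, and this is precisely where the orthogonality $\xi \perp \eta$ becomes indispensable: the orthogonal decomposition makes $(y_1+sa)$ and $(y_2+sb)$ independent coordinates and produces the cross term $y_1^2 y_2^2$ with a favorable sign. Without orthogonality, the natural product estimate loses a full factor of $\l x\r$, yielding $\l x\r^{1-p}$ on the right in place of the sharp $\l x\r^{-p}$. The auxiliary bound $K^2 + L^2 \ge \l x\r^2$ is what then permits the sharp constant $4p/(p-1)$ instead of one growing exponentially in $p$.
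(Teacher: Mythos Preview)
Your argument is correct. The orthogonal decomposition $x=y_1\hat\xi+y_2\hat\eta+x'$ yields the clean factorizations $\l x+s\xi\r^2=K^2+(y_1+sa)^2$ and $\l x+s\eta\r^2=L^2+(y_2+sb)^2$, and the two algebraic facts $K^2L^2=R\l x\r^2+y_1^2y_2^2\ge\l x\r^2$ and $K^2+L^2=R+\l x\r^2\ge\l x\r^2$ are exactly right. Dropping to the two terms $K^2L^2+L^2(y_1+sa)^2$ after the WLOG reduction $L^2\ge\l x\r^2/2$ is legitimate because swapping $(\xi,\eta)$ swaps $(K,L,a,b)\leftrightarrow(L,K,b,a)$ while leaving both the integrand and the target bound invariant; in the complementary case the same computation yields a factor $1/b$ in place of $1/a$, and in either case $1/a,1/b\le 1/\min\{|\xi|,|\eta|\}$. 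The final one-dimensional integral is handled correctly by Lemma~\ref{one bracket} with $d=1$, and the constant $2\sqrt{2}\cdot\tfrac{2p}{p-1}<\tfrac{4p}{p-1}$ matches the statement.

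There is nothing to compare against in the present paper: Lemma~\ref{time integral} is quoted verbatim from \cite[Lemma~3.3]{ammipata24} without proof. Your self-contained argument is a valid proof of the cited result.
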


Our next goal is to  provide estimates (see Lemma \ref{delta convolution lemma} and Lemma \ref{appendix lemma on velocities weight}) that are used to control velocity integrals in a priori estimates in Section \ref{section - a priori estimates}. They, in turn, rely on two results - a convolution lemma from \cite{ammipata24} (Lemma \ref{convolution lemma}) and a paramertization lemma for the integration over resonant manifolds (Lemma \ref{delta lemma}). We start by recalling  the convolution lemma from \cite{ammipata24}.

\begin{lemma}[\protect{\cite[Lemma A.2.]{ammipata24}}]\label{convolution lemma}
Suppose $\delta\in (-d,0]$  and let $q>d+\delta$. Then there exists a positive constant $L_{q,\delta}$ such that 
\begin{equation}\label{convolution estimate}\int_{\R^d}|y-v|^{\delta}\l y\r^{-q}\,dy\le L_{q,\delta},\quad\forall v\in\R^d. 
\end{equation}
 One can take 
\begin{align}\label{L_{q,gamma}}
    L_{q,\delta} = \omega_{d-1} \left( \frac{1}{d} + \frac{1}{d+\delta} + \frac{2}{q -d - \delta} \right).
\end{align}
\end{lemma}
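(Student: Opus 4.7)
\emph{Proof proposal.} The plan is to decompose the domain of integration according to the distance between $y$ and $v$, so as to separate the mild singularity of $|y-v|^\delta$ at $y=v$ from its role as a decaying weight at infinity. Writing
\begin{equation*}
\int_{\R^d}|y-v|^\delta \l y\r^{-q}\,dy = I_{\mathrm{near}}(v) + I_{\mathrm{far}}(v),
\end{equation*}
where $I_{\mathrm{near}}$ is the integral over $\{|y-v|\le 1\}$ and $I_{\mathrm{far}}$ the integral over the complement, I would estimate each piece separately.

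For the near term I would bound $\l y\r^{-q}\le 1$ and pass to spherical coordinates centered at $v$, obtaining
\begin{equation*}
I_{\mathrm{near}}(v)\le \omega_{d-1}\int_0^1 r^{d+\delta-1}\,dr = \frac{\omega_{d-1}}{d+\delta}.
\end{equation*}
Here the hypothesis $\delta>-d$ is precisely what is needed to make the singularity integrable, and this term contributes the $(d+\delta)^{-1}$ summand in $L_{q,\delta}$.

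For the far term, the crude inequality $|y-v|^\delta\le 1$ (valid since $\delta\le 0$ and $|y-v|>1$) would already reduce the problem to integrating $\l y\r^{-q}$ over $\R^d$ and would close the estimate whenever $q>d$, but would produce the weaker denominator $(q-d)^{-1}$. To recover the sharper $(q-d-\delta)^{-1}$ announced in the statement, I would split $I_{\mathrm{far}}$ further along $\{|y|\le 1\}$ versus $\{|y|>1\}$. On the former, $\l y\r^{-q}\le 1$ yields a contribution of size $\omega_{d-1}/d$. On the latter, using $\l y\r\ge |y|$ together with the observation that $|y-v|\gtrsim |y|$ once $|y|$ is large enough compared to $|v|$ (an elementary triangle inequality step, possibly splitting into $|y|>2|v|$ versus $|y|\le 2|v|$ and handling the bounded-$|v|$ piece with the earlier crude bound), one gains the integrand bound $|y-v|^\delta\l y\r^{-q}\lesssim |y|^{\delta-q}$, which is integrable on $\{|y|>1\}$ exactly under the hypothesis $q>d+\delta$ and contributes $O\bigl((q-d-\delta)^{-1}\bigr)$.

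The main obstacle is bookkeeping rather than analysis: the basic ingredients are an integrable singularity and an integrable tail, but assembling the three contributions to produce exactly the constant $\omega_{d-1}\bigl(\tfrac{1}{d}+\tfrac{1}{d+\delta}+\tfrac{2}{q-d-\delta}\bigr)$ requires keeping track of the overlap between the two splittings and of a small-$|v|$ versus large-$|v|$ dichotomy in the far region; the factor of $2$ in the last term presumably reflects exactly this need to combine two subcases before invoking the sharp decay bound $|y-v|\gtrsim |y|$.
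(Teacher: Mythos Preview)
The paper does not actually prove this lemma; it is quoted verbatim from \cite{ammipata24}. So there is no ``paper's proof'' to compare against, and I evaluate your sketch on its own terms.

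Your near-field estimate and the piece on $\{|y-v|>1,\ |y|\le 1\}$ are correct and already account for the first two summands in $L_{q,\delta}$. The genuine gap is in your treatment of the region $\{|y-v|>1,\ |y|>1\}$. You propose to split according to $|y|>2|v|$ versus $|y|\le 2|v|$ and, on the latter, to invoke the ``crude bound'' $|y-v|^\delta\le 1$. That reduces the contribution of $\{1<|y|\le 2|v|\}$ to
\[
\int_{1<|y|\le 2|v|} |y|^{-q}\,dy \;=\; \omega_{d-1}\int_1^{2|v|} r^{d-1-q}\,dr,
\]
which is \emph{not} bounded uniformly in $v$ unless $q>d$. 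The hypothesis of the lemma, however, is only $q>d+\delta$, and since $\delta\le 0$ this permits $q\in(d+\delta,d]$; in that range your bound blows up as $|v|\to\infty$. So this is not merely bookkeeping: the split by $|y|$ versus $|v|$ cannot close the estimate in the full stated range of~$q$.

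The fix is to compare $|y|$ with $|y-v|$ rather than with $|v|$. On $\{|y|>1,\ |y-v|>1\}$, if $|y-v|\ge|y|$ then $|y-v|^\delta\le|y|^\delta$ and the integrand is at most $|y|^{\delta-q}$; if $|y-v|<|y|$ then $|y|^{-q}<|y-v|^{-q}$ and the integrand is at most $|y-v|^{\delta-q}$. Each half then integrates (over $\{|y|>1\}$, respectively $\{|y-v|>1\}$) to exactly $\omega_{d-1}/(q-d-\delta)$, producing the factor $2$ in the stated constant without any dependence on $v$ and using only the hypothesis $q>d+\delta$.
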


Next we prove a parametrization lemma for the integration over resonant manifolds determined by $\Sigma = v+v_1 -v_2-v_3$ and $\Omega = |v|^2 + |v_1|^2 - |v_2|^2 - |v_3|^2$.  
\begin{lemma}\label{delta lemma}
For $f:\R^{4d}\to\R$ for which the integrals below make sense, we have
$$\int_{\R^{3d}}{\delta(\Sigma)\delta(\Omega)f(v,v_1,v_2,v_3)}\,dv_1\,dv_2\,dv_3
= 2^{-d} \int_{\R^d\times\S^{d-1}}|v-v_1|^{d-2}f(v,v_1,v_2(\sigma),v_3(\sigma))\,d\sigma\,dv_1,$$
where $\Sigma = v+v_1 -v_2-v_3$, $\Omega = |v|^2 + |v_1|^2 - |v_2|^2 - |v_3|^2$, and
\begin{equation}
\begin{split}
v_2(\sigma)&=\frac{v+v_1}{2}+\frac{|v-v_1|}{2}\sigma,\quad
v_3(\sigma)=\frac{v+v_1}{2}-\frac{|v-v_1|}{2}\sigma.
\end{split}    
\end{equation}
\end{lemma}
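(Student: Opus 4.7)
The plan is to perform the $\delta$-integration in two stages: first eliminate the momentum delta $\delta(\Sigma)$, then eliminate the energy delta $\delta(\Omega)$ after changing to Galilean-type coordinates centered at the mean velocity $(v+v_1)/2$.

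First I would integrate out $v_3$ against $\delta(v+v_1-v_2-v_3)$, which forces $v_3 = v+v_1-v_2$ and collapses the triple integral to
$$\int_{\R^{2d}} \delta\bigl(|v|^2+|v_1|^2-|v_2|^2-|v+v_1-v_2|^2\bigr)\, f(v,v_1,v_2,v+v_1-v_2)\,dv_1\,dv_2.$$
Next, for fixed $v,v_1$, introduce $w=(v+v_1)/2$ and $u=(v-v_1)/2$, so that $|v|^2+|v_1|^2 = 2|w|^2+2|u|^2$, and change variables from $v_2$ to $z:=v_2-w$ with unit Jacobian. Then $v+v_1-v_2 = w-z$, and $|v_2|^2 + |v+v_1-v_2|^2 = 2|w|^2+2|z|^2$, so the remaining delta simplifies to $\delta(2|u|^2-2|z|^2) = \tfrac12\delta(|u|^2-|z|^2)$.

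Then I would pass to polar coordinates $z = r\sigma$, $r\ge 0$, $\sigma\in\S^{d-1}$, with $dz=r^{d-1}\,dr\,d\sigma$. On the positive half-line, the identity $\delta(|u|^2-r^2) = \tfrac{1}{2|u|}\delta(r-|u|)$ evaluates $r$ at $r=|u|=|v-v_1|/2$. Performing the $r$-integration leaves
$$\frac{1}{4}\int_{\R^d}dv_1\int_{\S^{d-1}}d\sigma\,|u|^{d-2} f\bigl(v,v_1,w+|u|\sigma,w-|u|\sigma\bigr),$$
and since $|u|^{d-2} = 2^{2-d}|v-v_1|^{d-2}$ while $w\pm|u|\sigma$ are exactly $v_2(\sigma)$ and $v_3(\sigma)$ in the statement, the prefactors combine to $\tfrac14\cdot 2^{2-d} = 2^{-d}$, giving the claimed formula.

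This computation is essentially routine; the only delicate bookkeeping is the $\delta$-scaling identity combined with matching the polar factor $r^{d-1}$ against $1/(2|u|)$ to produce the correct exponent $d-2$ on $|v-v_1|$ and the correct overall constant $2^{-d}$. The argument is standard under the usual interpretation of $\delta(\Sigma)\delta(\Omega)$ as a surface measure on the resonant manifold, and the assumption that $f$ is such that all the intermediate integrals make sense (e.g.\ one may justify the formal manipulations by mollification of both deltas and passage to the limit).
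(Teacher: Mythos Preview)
Your proof is correct and follows essentially the same route as the paper: integrate out $v_3$ via $\delta(\Sigma)$, shift $v_2$ to the center $(v+v_1)/2$, pass to polar coordinates, and evaluate the radial $\delta$. The only cosmetic differences are that the paper uses a $\sqrt{2}$-scaled shift $y=\sqrt{2}(v_2-(v+v_1)/2)$ and then the substitution $z=r^2$ to resolve the radial delta, whereas you use a unit-Jacobian shift and the scaling identity $\delta(|u|^2-r^2)=\tfrac{1}{2|u|}\delta(r-|u|)$ directly; both routes produce the same constant $2^{-d}$.
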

\begin{proof}
 Let us write 
 \begin{align*}
 I(v)&=\int_{\R^{3d}}{\delta(\Sigma)\delta(\Omega)f(v,v_1,v_2,v_3)}\,dv_1\,dv_2\,dv_3\\
 &=\int_{\R^{2d}}\delta(|v_2|^2+|v+v_1-v_2|^2-|v|^2-|v_1|^2)f(v,v_1,v_2, v+v_1-v_2)\,dv_1\,dv_2.
 \end{align*} 
It is easy to verify that
 \begin{align*}
 |v_2|^2+|v+v_1-v_2|^2-|v|^2-|v_1|^2
 & = 2 \left( \left| v_2 - \frac{v+v_1}{2}\right|^2 - \left| \frac{v-v_1}{2}\right|^2 \right),
 \end{align*}
and so
 \begin{align*}
    I(v) &=\int_{\R^{2d}}
    \delta\left(2  \left| v_2 - \frac{v+v_1}{2}\right|^2 - 2\left| \frac{v-v_1}{2}\right|^2 \right)
   f(v,v_1,v_2, v+v_1-v_2)\,dv_1\,dv_2.
 \end{align*}
By letting $y = \sqrt{2}\left(v_2 - \frac{v+v_1}{2}\right)$, and using polar coordinates, we obtain
\begin{align*}
I(v) & = 2^{-d/2} \int_{\R^{2d}}
    \delta\left(|y|^2-   \frac{|v-v_1|^2}{2} \right)
    f\left(v, v_1, \frac{v+v_1}{2} + \frac{y}{\sqrt{2}}, \, \frac{v+v_1}{2} -\frac{y}{\sqrt{2}} \right) dv_1 dy\\
& = 2^{-d/2} \int_{\R^d} \int_0^\infty \int_{\S^{d-1}}
    \delta\left(r^2 - \frac{|v-v_1|^2}{2} \right)  f\left(v, v_1, \frac{v+v_1}{2} + \frac{r\sigma}{\sqrt{2}}, \, \frac{v+v_1}{2} -\frac{r\sigma}{\sqrt{2}} \right)
    r^{d-1} d\sigma dr dv_1.
\end{align*}
Next we apply another change of variables $z=r^2$ (and so $dr = \frac{dz}{2\sqrt{z}}$)
to further obtain
\begin{align*}
 I(v) 
& =   
2^{-\frac{d}{2} -1} \int_{\R^d} \int_0^\infty \int_{\S^{d-1}}
    \delta\left(z - \frac{|v-v_1|^2}{2} \right)  f\left(v, v_1, \frac{v+v_1}{2} + \frac{\sqrt{z}\sigma}{\sqrt{2}}, \, \frac{v+v_1}{2} -\frac{\sqrt{z}\sigma}{\sqrt{2}} \right)
    z^{\frac{d-2}{2}} d\sigma dz dv_1\\
& = 2^{-\frac{d}{2} -1}\, 2^{-\frac{d-2}{2}} \int_{\R^d}  \int_{\S^{d-1}}
     |v-v_1|^{d-2}~f\left(v, v_1, \frac{v+v_1}{2} +  \frac{|v-v_1|}{2}\sigma, \, \frac{v+v_1}{2} - \frac{|v-v_1|}{2}\sigma \right)
     d\sigma  dv_1,
\end{align*}
which completes the proof of the lemma.
\end{proof}

The following two lemmata, which are a consequence of Lemma \ref{convolution lemma} and \ref{delta lemma}, will be essential in providing estimates on velocity integrals in Section \ref{section - a priori estimates}. 

\begin{lemma}[Analogue of Lemma \ref{convolution lemma} in delta notation]\label{delta convolution lemma}
Let $d\in\{2,3\}$ and $q>2d-3$. Then there exists a positive constant $\tild{L}_q$ such that for $\Sigma = v+v_1 -v_2-v_3$, $\Omega = |v|^2 + |v_1|^2 - |v_2|^2 - |v_3|^2$ we have
\begin{align} 
    \int_{\R^{3d}} \delta(\Sigma)\delta(\Omega) 
    \frac{1}{ |v-v_1| \l  v_{1}\r^q }
    \,dv_{1}\,dv_{2}\,dv_{3} \le \tild{L}_q, \quad \forall v \in \mathbb{R}^d.
\end{align}
One can take 
\begin{align}\label{constant L}
\tild{L}_q = 2^{-d} \omega_{d-1}^2 \left( \frac{1}{d} + \frac{1}{2d-3} + \frac{2}{q-2d+3}\right),
\end{align} where $\omega_{d-1}$ denotes the area of the unit sphere $\S^{d-1}$ in $\R^d$. In particular, for $d=3$ we have
$\tild{L}_q  = 4\pi^2 \left( \frac{1}{3} + \frac{1}{q-3}\right)$.

\end{lemma}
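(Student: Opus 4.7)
The plan is to chain the parametrization identity of Lemma \ref{delta lemma} with the weighted convolution estimate of Lemma \ref{convolution lemma}. The integrand $\frac{1}{|v-v_1|\langle v_1\rangle^q}$ depends only on $v$ and $v_1$, so applying Lemma \ref{delta lemma} with $f(v,v_1,v_2,v_3) = \frac{1}{|v-v_1|\langle v_1\rangle^q}$ reduces the triple integral with the two delta functions to
\begin{equation*}
\int_{\R^{3d}} \delta(\Sigma)\delta(\Omega)\,\frac{dv_1\,dv_2\,dv_3}{|v-v_1|\,\langle v_1\rangle^q} = 2^{-d}\int_{\R^d}\int_{\S^{d-1}}|v-v_1|^{d-2}\,\frac{d\sigma\,dv_1}{|v-v_1|\,\langle v_1\rangle^q}.
\end{equation*}
Since the integrand is independent of $\sigma$, the sphere integral contributes a factor $\omega_{d-1}$, and the prefactor $|v-v_1|^{d-2}$ combines with $|v-v_1|^{-1}$ to produce $|v-v_1|^{d-3}$. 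Thus
\begin{equation*}
\int_{\R^{3d}} \delta(\Sigma)\delta(\Omega)\,\frac{dv_1\,dv_2\,dv_3}{|v-v_1|\,\langle v_1\rangle^q} = 2^{-d}\,\omega_{d-1}\int_{\R^d}|v-v_1|^{d-3}\,\langle v_1\rangle^{-q}\,dv_1.
\end{equation*}

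Next I would apply Lemma \ref{convolution lemma} with the choice $\delta = d-3$. The hypothesis $\delta\in(-d,0]$ of that lemma is precisely what forces the restriction $d\in\{2,3\}$, while the hypothesis $q > d+\delta = 2d-3$ is the stated assumption on $q$. This yields
\begin{equation*}
\int_{\R^d}|v-v_1|^{d-3}\langle v_1\rangle^{-q}\,dv_1 \le L_{q,d-3} = \omega_{d-1}\left(\frac{1}{d}+\frac{1}{2d-3}+\frac{2}{q-2d+3}\right),
\end{equation*}
uniformly in $v\in\R^d$. Multiplying by $2^{-d}\omega_{d-1}$ gives the claimed uniform bound with
\begin{equation*}
\tilde{L}_q = 2^{-d}\omega_{d-1}^{\,2}\left(\frac{1}{d}+\frac{1}{2d-3}+\frac{2}{q-2d+3}\right).
\end{equation*}
Finally, specializing to $d=3$ with $\omega_2 = 4\pi$ yields $\tilde{L}_q = 2\pi^2\bigl(\tfrac{2}{3}+\tfrac{2}{q-3}\bigr) = 4\pi^2\bigl(\tfrac{1}{3}+\tfrac{1}{q-3}\bigr)$, matching the statement.

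There is no genuine obstacle here; the proof is essentially bookkeeping, and the only subtlety worth flagging is that the dimension constraint $d\in\{2,3\}$ appears as a structural consequence of Lemma \ref{convolution lemma} (one needs $d-3\in(-d,0]$), not as an artifact. This clarifies why the uniqueness and well-posedness results downstream must be restricted to $d=3$ (the case $d=2$ being excluded by other estimates, e.g.\ Lemma \ref{appendix lemma on velocities weight}, rather than by this one).
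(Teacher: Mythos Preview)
Your proof is correct and follows essentially the same route as the paper's: apply the parametrization Lemma \ref{delta lemma} to reduce to $2^{-d}\omega_{d-1}\int_{\R^d}|v-v_1|^{d-3}\langle v_1\rangle^{-q}\,dv_1$, then invoke Lemma \ref{convolution lemma} with $\delta=d-3$. Your added remark explaining why the hypothesis $\delta\in(-d,0]$ forces $d\in\{2,3\}$ is a nice clarification not spelled out in the original.
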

\begin{proof}
By  Lemma \ref{delta lemma}, we have 
\begin{align*}
 I := \int_{\R^{3d}} &\delta(\Sigma)  \delta(\Omega) 
    \frac{1}{ |v-v_1| \l  v_{1}\r^q }
    \,dv_{1}\,dv_{2}\,dv_{3}    = 2^{-d} \int_{\R^d\times\S^{d-1}}|v-v_1|^{d-3} \l v_1\r^{-q} \,d\sigma\,dv_1.
\end{align*}
Since $d\in\{2,3\}$ and $q>2d-3$, we can apply Lemma \ref{convolution lemma} with $\delta = d-3$ to obtain
\begin{align*}
    I \le 2^{-d} \omega_{d-1} L_{q,d-3},
\end{align*}
where  $L_{q,d-3}$ is given by \eqref{L_{q,gamma}} with $\delta = d-3$. This completes the proof of the lemma.
\end{proof}

\begin{lemma}\label{appendix lemma on velocities weight}
For $d=3$ and $q>3$ there exists a positive constant $U_{q}$ such that

\begin{align}
&\sup_{v\in\R^d}\int_{\R^{3d}}\frac{\delta(\Sigma)\delta(\Omega)}{|v-v_1|\sqrt{1-(\frac{v-v_{1}}{|v-v_{1}|}\cdot\frac{v_{2}-v_{3}}{|v_{2}-v_{3}|})^2}}\frac{\l v\r^{q}}{\l v_1\r^q\l v_{2}\r^{q}\l v_{3}\r^{q}}\,dv_1\,dv_2\,dv_3 \leq U_{q}, \label{Uq three v}\\
&
\sup_{v\in\R^d}\int_{\R^{3d}}\frac{\delta(\Sigma)\delta(\Omega)}{|v-v_1|\sqrt{1-(\frac{v-v_{1}}{|v-v_{1}|}\cdot\frac{v_{2}-v_{3}}{|v_{2}-v_{3}|})^2}}\frac{1}{\l v_2\r^q \l v_3\r^q}\,dv_1\,dv_2\,dv_3 \leq U_{q}.  \label{Uq two v}
\end{align}
 One can take 
\begin{align} \label{constant U}
U_q =  2\pi^3 \left( \frac{1}{3} + \frac{1}{q-3}\right).
\end{align}

\end{lemma}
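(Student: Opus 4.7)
The plan is to reduce the nine-dimensional integral on the resonant manifold to a sphere-plus-velocity integral via Lemma \ref{delta lemma}, compute the spherical factor explicitly, apply a sharp lower bound on $\l v_2\r\l v_3\r$ coming from energy conservation, and close with Lemma \ref{convolution lemma}.

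\textbf{Step 1 (Delta reduction).} Apply Lemma \ref{delta lemma} with $d=3$. The factor $|v-v_1|^{d-2}=|v-v_1|$ produced by the parametrization exactly cancels the $|v-v_1|^{-1}$ appearing in the integrand. Since $v_2(\sigma)-v_3(\sigma)=|v-v_1|\sigma$, one has $\widehat{u}_{2,3}=\sigma$, and letting $e:=\widehat{v-v_1}$, the geometric factor becomes $\sqrt{1-(e\cdot\sigma)^2}$. Thus both integrals reduce to
\[
\frac{1}{8}\int_{\R^3}\int_{\S^2}\frac{W(v,v_1,\sigma)}{\sqrt{1-(e\cdot\sigma)^2}}\,d\sigma\,dv_1,
\]
where $W$ is, respectively, $\l v\r^q/(\l v_1\r^q\l v_2(\sigma)\r^q\l v_3(\sigma)\r^q)$ or $1/(\l v_2(\sigma)\r^q\l v_3(\sigma)\r^q)$.

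\textbf{Step 2 (Spherical integral).} Using spherical coordinates on $\S^2$ with $e$ as the north pole, so that $e\cdot\sigma=\cos\phi$, one computes
\[
\int_{\S^2}\frac{d\sigma}{\sqrt{1-(e\cdot\sigma)^2}}=\int_0^{2\pi}\!\!\int_0^\pi\frac{\sin\phi}{\sqrt{\sin^2\phi}}\,d\phi\,d\theta=2\pi^2,
\]
independent of $e$, hence of $v_1$.

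\textbf{Step 3 (Energy-conservation lower bound).} The crucial point is the pointwise estimate
\[
\l v_2(\sigma)\r^2\l v_3(\sigma)\r^2=(1+|v_2|^2)(1+|v_3|^2)\ge 1+|v_2|^2+|v_3|^2=1+|v|^2+|v_1|^2,
\]
where the last equality is energy conservation (Lemma \ref{lemma on resonant manifold} gives $|v_2|^2+|v_3|^2=|v|^2+|v_1|^2$ on the parametrization, or it is immediate from $v_2\pm v_3$ expansion). Since $1+|v|^2+|v_1|^2\ge\max(\l v\r^2,\l v_1\r^2)$, we obtain
\[
\l v_2(\sigma)\r\l v_3(\sigma)\r\ge\max(\l v\r,\l v_1\r),\qquad\forall\sigma\in\S^2.
\]
Consequently, for \eqref{Uq three v},
\[
\frac{\l v\r^q}{\l v_1\r^q\l v_2(\sigma)\r^q\l v_3(\sigma)\r^q}\le\frac{\l v\r^q}{\l v_1\r^q\l v\r^q}=\frac{1}{\l v_1\r^q},
\]
and for \eqref{Uq two v}, $\l v_2(\sigma)\r^q\l v_3(\sigma)\r^q\ge\l v_1\r^q$ gives the same upper bound $\l v_1\r^{-q}$.

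\textbf{Step 4 (Convolution lemma and constants).} Combining Steps 1--3,
\[
I\le\frac{1}{8}\cdot 2\pi^2\int_{\R^3}\l v_1\r^{-q}\,dv_1.
\]
Lemma \ref{convolution lemma} with $d=3$, $\delta=0$, $q>3$ (using $v=0$) yields $\int_{\R^3}\l v_1\r^{-q}\,dv_1\le L_{q,0}=4\pi\bigl(\tfrac{2}{3}+\tfrac{2}{q-3}\bigr)=8\pi\bigl(\tfrac{1}{3}+\tfrac{1}{q-3}\bigr)$. Putting everything together,
\[
I\le\frac{1}{8}\cdot 2\pi^2\cdot 8\pi\Bigl(\tfrac{1}{3}+\tfrac{1}{q-3}\Bigr)=2\pi^3\Bigl(\tfrac{1}{3}+\tfrac{1}{q-3}\Bigr)=U_q.
\]

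The main technical point is the sharp bound $\l v_2\r\l v_3\r\ge\max(\l v\r,\l v_1\r)$; naive bounds such as $\l v_2\r^2+\l v_3\r^2\ge 2\l v_2\r\l v_3\r$ go the wrong way and lead to extraneous factors of $2^{q/2}$. The observation that expanding $(1+|v_2|^2)(1+|v_3|^2)$ produces the cross term $|v_2|^2+|v_3|^2$ which is exactly the conserved energy gives the clean estimate and produces the constant $U_q$ as stated.
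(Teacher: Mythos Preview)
Your proof is correct and follows essentially the same route as the paper's: the key energy-conservation bound $\langle v_2\rangle^2\langle v_3\rangle^2 \ge \max(\langle v\rangle^2,\langle v_1\rangle^2)$, the delta reduction via Lemma \ref{delta lemma}, the explicit spherical integral (the paper writes it as $\le \pi\omega_{d-2}=2\pi^2$ for $d=3$), and the closing application of Lemma \ref{convolution lemma} with $\delta=d-3=0$ all match. The only difference is presentational order.
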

\begin{proof}
Let us denote the integral appearing in \eqref{Uq three v} by $I_1(v)$ and the integral in \eqref{Uq two v} by $I_2(v)$.
 By the conservation of energy, we have
$$\l v_2\r^2 \l v_3\r^2 
=(1+|v_2|^2)(1+|v_3|^2)
=1+|v|^2+|v_1|^2+ |v_2|^2|v_3|^2\geq \max\{\l v\r^2, \l v_1\r^2\},$$
and thus
$$\frac{\l v\r^q}{\l v_2\r^q \l v_3\r^q}\leq 1, \quad \text{and} \quad \frac{1}{\l v_2\r^2 \l v_3\r^2} \le \frac{1}{\l v_1\r^2},$$
which implies that both $I_1(v)$ and $I_2(v)$ can be estimated by the same upper bound:
\begin{align*}
I_1(v), \, I_2(v) 
&\le 
\int_{\R^{3d}}\frac{\delta(\Sigma)\delta(\Omega)}{|v-v_1|\sqrt{1-(\frac{v-v_{1}}{|v-v_{1}|}\cdot\frac{v_{2}-v_{3}}{|v_{2}-v_{3}|})^2}}\frac{1}{\l v_1\r^q }\,dv_1\,dv_2\,dv_3.
\end{align*}
Then by Lemma \ref{delta lemma}, we have
\begin{align*}
I_1(v), \, I_2(v) & \le 2^{-d} \int_{\R^d} \frac{|v-v_1|^{d-3}}{\l v_1\r^q }
\int_{\S^{d-1}}
\frac{1}{\sqrt{1-(\frac{v-v_{1}}{|v-v_{1}|}\cdot\sigma)^2}} d\sigma dv_1.
\end{align*}
For $d\ge 3$, integration in spherical coordinates yields
\begin{align}\label{b integral}
\int_{\S^{d-1}} \frac{1}{\sqrt{1-(\hat{n}\cdot\sigma)^2}}\,d\sigma
&= \omega_{d-2}\int_0^\pi \sin^{d-3}(\theta)\,d\theta \leq  \pi\omega_{d-2},  
\end{align}
where $\omega_{d-2}$ denotes the area of the unit sphere $\S^{d-2}$.
We next apply Lemma \ref{convolution lemma} with $\delta = d-3$, which requires that $d\in\{2,3\}$ and $q>2d-3$. Since the estimate \eqref{b integral} required $d\ge 3$,  we need the dimension to be $d=3$, and together with $q>2d-3=3$, we have
\begin{align*}
I_1(v), \, I_2(v) & \le 2^{-d} \pi \omega_{d-2} \int_{\R^d} |v-v_1|^{d-3}\l v_1\r^{-q} dv_1
\le 2^{-d} \pi \omega_{d-2} L_{q,d-3},
\end{align*}
where  $L_{q,d-3}$ is the constant  from Lemma \ref{convolution lemma}. Since $d=3$, we in fact have
\begin{align*}
    I_1(v), \, I_2(v) \le 2^{-3} \pi (2\pi) (4\pi) \left( \frac{1}{3} +\frac{1}{3} +\frac{2}{q-3} \right) = 2\pi^3 \left( \frac{1}{3} + \frac{1}{q-3}\right).
\end{align*}

This completes the proof of the lemma.
\end{proof}

\end{document}